\newcommand{\R}{\mathbb{R}}
\newcommand{\Z}{\mathbb{Z}}
\newcommand{\e}{\varepsilon}
\newcommand{\di}[1]{\,\mathrm{d}#1}
\newcommand{\dive}{\operatorname{div}}
\newcommand{\interior}{\operatorname{int}}
\newcommand{\ddt}{\frac{\operatorname{d}}{\operatorname{d}t}}
\newcommand{\twosc}{\stackrel{2}{\rightharpoonup}}
\newcommand{\jump}[1]{\llbracket #1\rrbracket}
\newcommand{\thetaIdx}[1]{\theta^{#1}_\varepsilon}
\newcommand{\OmegaIdx}[1]{\Omega^{#1}_\varepsilon}
\newcommand{\alphafs}{\alpha^{0}}
\newcommand{\alphafg}{\alpha^{f}}
\newcommand{\alphasg}{\alpha^{s}}
\newcommand{\Sigmafs}{\Sigma^{0}_\varepsilon}
\newcommand{\Sigmafg}{\Sigma^{f}_\varepsilon}
\newcommand{\Sigmasg}{\Sigma^{s}_\varepsilon}
\newcommand{\Gammafs}{\Gamma^{0}}
\newcommand{\Gammafg}{\Gamma^{f}}
\newcommand{\Gammasg}{\Gamma^{s}}
\newcommand{\GrainCell}{Z}
\newcommand\restr[2]{{
  \left.\kern-\nulldelimiterspace 
  #1 
  \vphantom{\big|} 
  \right|_{#2} 
  }}
\newcommand{\myitem}[1]{%
\item[#1]\protected@edef\@currentlabel{#1}%
}
\newtheorem{definition}{Definiton}
\newtheorem{theorem}{Theorem}
\newtheorem{lemma}{Lemma}
\newtheorem{remark}{Remark}
\crefname{lemma}{Lemma}{lemmas}
\Crefname{lemma}{Lemma}{Lemmas}
\crefname{thm}{theorem}{theorems}
\Crefname{thm}{Theorem}{Theorems}
\Crefname{algocf}{Algorithm}{Algorithm}
\numberwithin{equation}{section}
\begin{document}

\title{Homogenization and simulation 
      of heat transfer through a thin grain layer}

\author[$\star, 1$]{Tom Freudenberg}
\author[$\dagger$]{Michael Eden}

\affil[$\star$]{Center for Industrial Mathematics, University of Bremen, Germany}
\affil[$\dagger$]{Department of Mathematics and Computer Science, Karlstad University, Sweden}

\maketitle

\begin{abstract}
We investigated the effective influence of grain structures on the heat transfer between a fluid and solid domain using mathematical homogenization. 
The presented model consists of heat equations inside the different domains, coupled through either perfect or imperfect thermal contact.  
The size and the period of the grains are of order $\varepsilon$, therefore forming a thin layer. The equation parameters inside the grains also depend on $\varepsilon$. 
We considered two distinct scenarios: Case (a), where the grains are disconnected, and Case (b), where the grains form a connected geometry but in a way such that the fluid and solid are still in contact. 
In both cases, we determined the effective differential equations for the limit $\varepsilon \to 0$ via the concept of two-scale convergence for thin layers. 
We also presented and studied a numerical algorithm to solve the homogenized problem.
\end{abstract}

{\bf Keywords: Homogenization; mathematical modeling; effective interface conditions; numerical simulations}

\setcounter{footnote}{1} 
\footnotetext{Corresponding author, Email: \hyperlink{tomfre@uni-bremen.de}{tomfre@uni-bremen.de}}

\section{Introduction}
We consider the heat dynamics in a domain $\Omega\subset\R^d$ consisting of a fluid region $\OmegaIdx{f}$ and an adjacent solid domain $\OmegaIdx{s}$, where small grain structures $\OmegaIdx{g}$ are periodically distributed along the fluid-solid interface $\Sigma$.
The height as well as the period of these grain structures is denoted by $\varepsilon > 0$, which is assumed to be much smaller than the overall size of the solid-fluid system $\Omega$.
This results in a $\e$-sized layer region in which all three regions (fluid, solid, and grains) are in contact.
Our objective is to determine the effective model for $\varepsilon \to 0$ via mathematical homogenization for thin domains.
Here, we consider both perfect thermal contact and imperfect heat exchange, modeled via a Robin condition, between the different regions.  

In the following, we assume that there is, for all $\varepsilon > 0$, direct contact between the fluid and the solid region. Regarding the grain geometries, we distinguish between two different cases:
\begin{itemize}
    \item \textbf{Case (a)}: Disconnected grains that are periodically distributed along the interface $\Sigma$.
    For the effective model, we obtain a two-scale problem with microstructures at the interface, similar to the homogenization results in \cite{Eden2022effective, Eden22, SHOWALTER04}.
    \item \textbf{Case (b)}: Connected grains, comparable to a sieve between the two regions. Here, we arrive at a Wentzell-Robin interface temperature \cite{Arendt03, Bonnailllie10} in the effective model, similarly to \cite{Amirat06, Bendali96, Gahn17}. 
\end{itemize}

Our research, particularly Case (a), is motivated by the influence and interplay of cooling fluids with the grain structure in a grinding process.
The friction between the grains and the workpiece heats the system.
Here, the grain can locally reach temperatures of up to 1200\,\degree C, which is much higher than the temperature of the surrounding region \cite{Rowe03, UEDA93, Wiesener22}.
On that point, it is important to accurately capture the impact the grains have on the temperature distribution to determine effects like workpiece burn and wear of the grinding wheel.
Current models only include the heat produced by the grains and do not include the grains directly in their simulations \cite{Chen20, GU04, YANG19}, mainly because of the high computational cost of simulating the whole grinding wheel with resolved grains.
Even if our model is somewhat idealized with periodically distributed grains and without direct consideration of the workpiece, we provide a base for including more details of grain geometry in future simulations. 
Some relevant early experimental results and simulations for this specific scenario, where individual grains were taken into account, can be found in \cite{Wiesener22}.

Other possible applications are reaction-diffusion systems where grain structures play a role. For example, a diffusion problem in a riverbed where our solid bulk would be replaced with porous media and the grains could represent larger rock formations at the ground of the river \cite{NAGAOKA90}.
Case (b) may be useful for filtering problems, particularly if one replaces the solid domain $\OmegaIdx{s}$ with an additional fluid region. Our results could then be combined with already established homogenization models for Stokes flow through thin filters \cite{Allaire91, Conca88}.  

To derive the effective model, we apply the concept of two-scale convergence for thin heterogeneous layers, first introduced in \cite{Neuss07}.
Similar problems for diffusion equations were considered in \cite{Gahn17, Gahn18}.
One novel aspect in our research is that the fluid domain $\OmegaIdx{f}$ and the solid domain $\OmegaIdx{s}$ are in direct contact and not completely isolated by the grain structure.
This leads to additional coupling conditions between fluid and solid as well as a slight modification of the two-scale concept, whereby the already established two-scale theory and results can be transferred to our scenario.
Additionally, we derive an $\e$-independent trace estimate for domains that have a rough boundary given by a finite union of height functions.
Next to the analysis, we also carry out numerical investigations where we face the challenge of coupling the solution on the macro domain with the solution of the cell problems.
This is handled with an iterative algorithm, similar to \cite{Eden2022effective}.
We study the influence of a relaxation scheme on the number of iterations and also investigate numerically the limit behavior for $\e \to 0$.

Comparable to our problem is also the case of diffusion through fast oscillating interfaces with small \cite{Donato19, Donato10} or fixed \cite{Nevard97} amplitude, which was already extensively studied. Comparable differential equations and a similar geometrical setup, but without the interface grains and a porous media instead of the solid, were analyzed in our previous work \cite{Eden2022effective}.  
Additionally, the improved heat exchange at rough boundaries, for example between a fluid and a wall with a fixed temperature, is widely studied in the literature since it is a useful property in many applications \cite{Bhavnani1991, Owen63}. The research ranges from pure numerical studies \cite{Ting19, Yousaf15} to multiple-scale expansion and asymptotic matching \cite{ahmed22, INTROINI11}. While these studies are generally only concerned with heat transfer into the fluid, we want to study the cooling effect on the adjacent solid and rigorously derive an effective model for a thin layer of grains.

This paper is structured as follows: In \cref{sec:setup}, we introduce the mathematical model, including information about the studied geometries, the assumptions to pass to the limit $\varepsilon \to 0$, and possible limitations of the model. \cref{sec:analysis} handles the analysis of the present microscale model and we derive solution bounds for the microscale problem. In \cref{sec:homogenization}, the detailed homogenization procedure is presented and the homogenized models are stated and analyzed. Finally, in \cref{sec:simulations}, we introduce an algorithm for the effective model and demonstrate various simulation results.  
\section{Setup, notation, and mathematical equation}\label{sec:setup}
\subsection{Description of the geometry}
We start by introducing the geometric setup and notation as well as the mathematical models considered in this work.
After that, we cover the assumptions needed to apply homogenization to our problem.
In the following, the geometry and the model are split into three subdomains representing the fluid part, the solid part, and the grains.
Functions, parameters, and subdomains are denoted by the corresponding superscripts $f$, $s$, $g$.

The time interval is denoted by $S=(0, T)$, for $T>0$. The spatial domain $\Omega \subset \R^d$ is a cylinder given by $\Omega= \Tilde{\Omega} \times (-H, H)$, with a bounded Lipschitz domain $\Tilde{\Omega} \subset \R^{d-1}$ and height $H > 0$.
For the homogenization, we require that $\Tilde{\Omega}$ be perfectly tiled with axis-parallel $(d-1)$-dimensional cubes with corner coordinates in $\varepsilon_0\mathbb{Z}^d$ for some $\varepsilon_0 > 0$.
We define the subdomains and interface
\begin{linenomath*}\begin{equation*}
    \Omega^f = \Tilde{\Omega} \times (0, H),\quad 
    \Omega^s = \Tilde{\Omega} \times (-H, 0),\quad
    \text{ and } \quad 
    \Sigma = \Tilde{\Omega} \times \{0\}.
\end{equation*}\end{linenomath*}
A point $x \in \Omega$ will also be denoted by $x = (\Tilde{x}, x_d) \in \Tilde{\Omega} \times (-H, H)$.

We denote the $i$-dimensional unit cube by $Y^i=(0, 1)^i$.
The reference grain geometry $\GrainCell \subset Y^{d-1} \times (-1, 1)$ is assumed to be a Lipschitz domain.
For a unified notation, the intersection of the grains with fluid or solid should neither be empty, meaning
\begin{linenomath*}\begin{equation*}
    \{y\in \GrainCell : y_d > 0\} \neq \emptyset \quad  
    \text{ and } \quad 
    \{y\in \GrainCell : y_d < 0\} \neq \emptyset.
\end{equation*}\end{linenomath*}
In addition, the above sets are assumed to be connected Lipschitz domains; in particular, $\GrainCell$ is not allowed to have holes.
The flat surface without the cell and the boundaries of the grain cell are noted by
\begin{linenomath*}\begin{equation*}
    \Gammafs = (Y^{d-1} \times \{0\}) \setminus \GrainCell, 
    \quad \Gammafg = \{y\in \partial \GrainCell : y_d > 0\}
    \quad \text{ and} \quad 
    \quad \Gammasg = \{y\in \partial \GrainCell : y_d < 0\}.
\end{equation*}\end{linenomath*}
In addition to the Lipschitz assumption on the underlying domain $Z$, we assume that the vertical interface section, i.e., the set $\{x\in \partial \GrainCell: n_\Gamma(x)\cdot e_d = 0\}$, has surface measure $0$.
This assumption is needed for an $\e$-independent trace estimate and it allows us to represent the interface as a graph of a finite number of height functions defined over $[0, 1]^{d-1}$; one example is visualized in Figure \ref{fig:interface_example}.
Please note that with this assumption, we exclude, for example, rectangular cuboids.
See the proof of \cref{lem:trace_estimate} for a remark and possible extension to also include general Lipschitz boundaries.

\begin{figure}[H]
	\centering
	\includegraphics[width=0.6\linewidth]{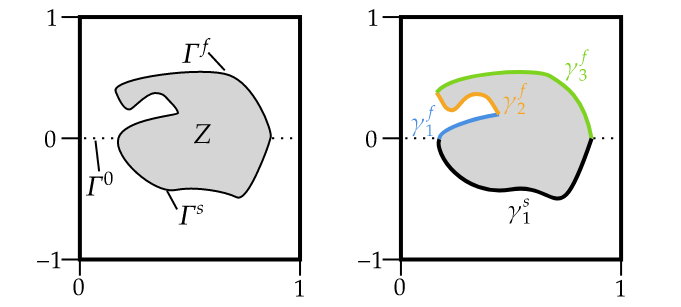}
	\caption{Left: one example of a cell geometry and visualization of the introduced notation. Right: decomposition of the interface into multiple graphs.}
	\label{fig:interface_example}
\end{figure}

The periodic interface structure of grains is created by scaling and tiling the reference cell.
Take $\varepsilon_0 > 0$ such that $\Tilde{\Omega}$ can be perfectly tiled with $\varepsilon_0 Y^{d-1}$ cells.
For a finer, perfect tiling, one can use $\varepsilon_n = \frac{1}{2^n}\varepsilon_0$, where in the following we suppress the index $n$.
At the interface $\Sigma$, we define
\begin{linenomath*}\begin{equation*}
        \OmegaIdx{g} = \text{int}\left(\bigcup_{\Tilde{k}\in\Z^{d-1}} \varepsilon \left(\overline{\GrainCell} + \left(\Tilde{k}, 0\right)\right)\right) \cap \Omega
\end{equation*}\end{linenomath*}
where we also assume $\OmegaIdx{g}$ to be Lipschitz.
For the domain $\OmegaIdx{g}$, two different cases are considered:
\begin{enumerate}
    \item[(a)] $\overline{\GrainCell} \subset Y^{d-1} \times (-1, 1)$.
    Therefore, $\Omega_\varepsilon^g$ is \textbf{disconnected}.
    This case is motivated by the application of grinding wheels where the grains are usually distributed and held together by a binding material.
    \item[(b)] Both $(Y^{d-1} \times (-1, 1)) \setminus \GrainCell$ and $\OmegaIdx{g}$ are \textbf{connected} and for $i=1,\dots,d-1$ it holds
    \begin{equation*}
        \left\{y \in \partial Z : y_i = 1 \right\} = \left\{y \in \partial Z : y_i = 0 \right\} + e_i.
    \end{equation*}
    In this case, the microstructure $\OmegaIdx{g}$ can be viewed as a kind of sieve between fluid and solid.
\end{enumerate}
Both cases and the notation are visualized in Figure \ref{fig:domain_picture}.
\begin{figure}[H]
    \centering
    \includegraphics[width=0.75\linewidth]{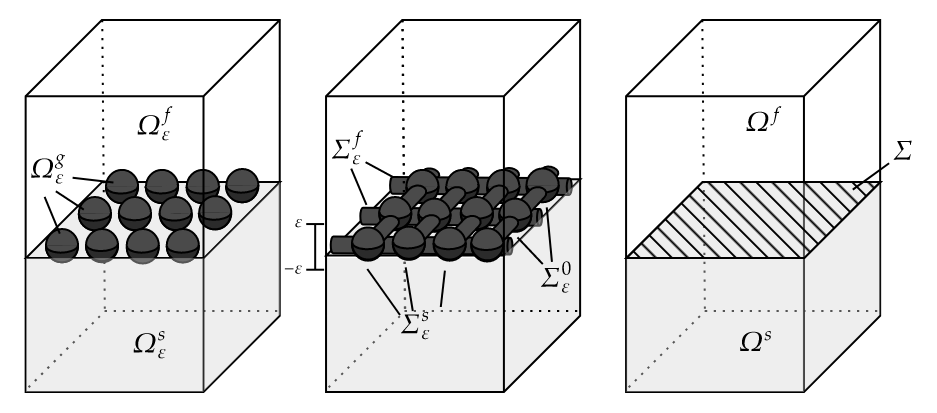}
    \caption{Schematic depiction of the different geometries considered. Left: disconnected grain structure. Center: connected microgeometry. Right: macroscopic domain.}
    \label{fig:domain_picture}
\end{figure}

Last, we obtain the $\varepsilon$-periodic subdomains
\begin{linenomath*}\begin{equation*}
        \Omega_\varepsilon^f = \interior\left(\overline{\Omega^f} \setminus 
        \Omega^g_\varepsilon\right), \quad \Omega_\varepsilon^s = \interior\left(\overline{\Omega^s} \setminus 
        \Omega^g_\varepsilon\right),
        \quad 
        \Omega_\e = \OmegaIdx{f} \cup \OmegaIdx{s}
\end{equation*}\end{linenomath*}
and the three interfaces
\begin{linenomath*}\begin{equation*}
    \Sigmafs = \partial\Omega_\varepsilon^{f} \cap \partial\Omega_\varepsilon^{s}, \quad
    \Sigmafg = \partial\Omega_\varepsilon^{f} \cap \partial\Omega_\varepsilon^{g}, \quad
    \Sigmasg = \partial\Omega_\varepsilon^{s} \cap \partial\Omega_\varepsilon^{g}.
\end{equation*}\end{linenomath*}
Please note that for the volume of the layer, it holds $|\OmegaIdx{g}| \in \mathcal{O}(\varepsilon)$, and for the surface of the interfaces $|\Sigmafg|, |\Sigmasg|, |\Sigmafs| \in \mathcal{O}(1)$.

In the mathematical model, multiple normal vectors of the domain appear. With $\nu=\nu(x)$, we denote the normal vector pointing outwards on $\partial \Omega$. The outward normal in $\Omega$ is denoted by $n=n(x)$, the normal pointing out of $\OmegaIdx{g}$ by 
$n_\varepsilon=n_\varepsilon(x)$, and lastly, the ones out of $\GrainCell$ by $n_\Gamma = n_\Gamma(x)$.  
\subsection{Mathematical model}
We start with a few comments regarding our notation: For any function $\phi\in L^2(\Omega)$, we use $\phi_\e^k:=\phi_{|\Omega_\e^k}$ to denote its restriction to $\Omega_\e^k$ for $k=f,s,g$.
For the coefficients of the model, which are mostly assumed to be piecewise constant, we suppress the superscripts and the $\e$-dependency for better readability wherever possible.
For example, for the mass density, which is assumed to be constant in every subdomain $\Omega_\e^k$ ($k=f,s,g$), i.e., there are $\rho^f,\rho^g,\rho^s>0$ such that $\rho_\e(x)=\sum_{i=f,g,s}\chi_{\Omega_\e^k}(x)\rho^k$, we just write $\rho$. Here, $\chi : \Omega \to \{0, 1\}$ denotes the indicator function, e.g., $\chi_{\Omega_\e^k}(x) = 1$ only if $x \in \Omega_\e^k$. 
We also introduce the $\jump{\cdot}$-notation to denote the jump of a function across the subdomains (in the direction of the normal vectors introduced in the preceding section), e.g., in the above example of the mass density, we have $\jump{\rho}=\rho^f-\rho^g$ at $\Sigma_\e^f$. 

Now, let $\theta_\e$ denote the temperature.
For the fluid and solid domains, we consider parabolic heat equations
\begin{subequations}\label{eq:epsilon-problem}
\begin{linenomath*}\begin{alignat}{2}
    c\rho \partial_t \theta_\e - \dive{(\kappa \nabla \theta_\e - c \rho v_\varepsilon \theta_\e)} &= f_\varepsilon &&\text{ in } S \times \OmegaIdx{f},\label{epsilon-problem:1} \\
    c \rho \partial_t \theta_\e - \dive{(\kappa \nabla \theta_\e)} &= f_\varepsilon \quad &&\text{ in } S \times \OmegaIdx{s}.\label{epsilon-problem:2}
\end{alignat}\end{linenomath*}
Here, $v_\e \in L^2(S, W^{1,\infty}(\OmegaIdx{f}))^d$
is a given velocity with $\nabla \cdot v_\varepsilon = 0$ and $v_\varepsilon = 0$ on $\Sigmafs \cup \Sigmafg$.
In each subdomain, we have the specific heat $c$, mass density $\rho$, heat conductivity $\kappa$, and heat source $f_\varepsilon$.  
At the interface between fluid and solid, we consider perfect heat transfer
\begin{linenomath*}\begin{alignat}{2}
        \jump{\theta_\e} &= 0 \quad  &&\text{ on } S \times \Sigmafs, \label{eq:1c}
        \\
        \jump{\kappa \nabla \theta_\e} \cdot n _\e
        &= 
        0 \quad &&\text{ on } S \times \Sigmafs. \label{eq:1d}
\end{alignat}\end{linenomath*}
%
Inside the grain structures, a scaled heat equation is utilized, such that for $\varepsilon \to 0$, the contribution does not vanish
\begin{linenomath*}\begin{equation*}
    \frac{1}{\varepsilon}\rho c \partial_t \theta_\e -
    \dive{(\kappa_\varepsilon \nabla \theta_\e)} = \frac{1}{\varepsilon} f_\varepsilon \quad \text{ in } S \times \OmegaIdx{g}.
\end{equation*}\end{linenomath*}
On the interface between grains and the surrounding subdomains, we apply heat balance and thermal resistivity conditions
\begin{linenomath*}\begin{alignat}{2}
    \jump{\kappa \nabla \theta_\e} \cdot n_\varepsilon &= 0\quad &&\text{ on } S \times (\Sigmafg\cup\Sigmasg), \\
    \kappa \nabla \theta_\e \cdot n_\varepsilon &= \alpha\jump{\theta_\e} &&\text{ on } S \times \Sigmasg,\\
    \kappa \nabla \theta_\e \cdot n_\varepsilon &= \alpha\jump{\theta_\e} \quad&&\text{ on } S \times \Sigmafg.
\end{alignat}\end{linenomath*}
Here, $\alpha$ denotes the heat exchange coefficient, which can differ on each interface. For small values of $\alpha$ (in respect to $|\Sigma_\varepsilon|^{-1}$) the above equations represent near thermal isolation between the subdomains. On the other hand, larger values can approximate perfect heat transfer, like in Eqs \eqref{eq:1c} and \eqref{eq:1d}.
\begin{remark}\label{remark:thermal_res}
    One could also consider thermal resistivity conditions on the interface $\Sigmafs$, e.g., 
    \begin{linenomath*}\begin{alignat}{2}
    \jump{\kappa \nabla \theta_\e} \cdot n_\varepsilon &= 0\quad &&\text{ on } S \times \Sigmafs, \\
    \kappa \nabla \theta_\e \cdot n_\varepsilon &= \alpha\jump{\theta_\e} \quad&&\text{ on } S \times \Sigmafs
    \end{alignat}\end{linenomath*}
    instead of  Eqs \eqref{eq:1c} and \eqref{eq:1d}. With modifications to the extension operator, the presented homogenization procedure can also be applied to this case. See also the \cref{re:non_perfect_transfer_discon} for the corresponding homogenized transfer conditions.

    Applying perfect heat transfer on all interfaces would be a particular case of the results from \cite{Neuss07}. 
\end{remark}
The adaptation mentioned in \cref{remark:thermal_res} may describe the application of a grinding process more realistically, since thermal equilibrium is usually not expected because of the continuous supply of new coolant and the different thermal properties of fluid, grinding wheel composite, and grains.
Since the perfect heat transfer along $\Sigmafs$ has a simpler notation, we mainly work with this case. 

Finally, we pose homogeneous Neumann boundary conditions at the outer boundaries and initial conditions:
\begin{linenomath*}\begin{alignat}{2}
    -\kappa \nabla \theta_\e \cdot \nu &= 0 &&\text{ on } S \times \partial \Omega, \\
    \theta_\e &= \theta_{\varepsilon, 0} \quad &&\text{ in } \{0\} \times \Omega.
\end{alignat}\end{linenomath*}
\end{subequations}
Different boundary conditions could also be applied, for example Dirichlet conditions at the upper and lower boundaries of $\Omega$.
Similarly, periodic boundary conditions can be used since they fit naturally in the concept of two-scale convergence. More caution is required as soon as we change the boundary conditions for the grain structure $\OmegaIdx{g}$. This influences the homogenization procedure and may need further consideration since one has to take into account the convergence on the boundary sections $\partial \Omega \cap \partial \OmegaIdx{g}$.
\subsection{Assumptions on data}
For a function $\phi \in L^2(\OmegaIdx{k})$, $k=f,s$, denote with $\hat{\phi}$ the zero extension to $\Omega^k$.
To pass to the limit, we assume the following properties on the data:
\begin{enumerate}
    \myitem{(A1)}\label{item:A1} It is assumed that the problem parameters are constant in each subdomain and fulfill $\kappa^k, \rho^k, c^k >0$, for $k=f,g,s$ and $\alphafg, \alphasg > 0$. Also, we assume the following scaling for the heat conductivity: 
    \begin{itemize}
        \item[(a)]\label{case:disconnected} $\OmegaIdx{g}$ \textbf{disconnected}: $\kappa^g_\varepsilon = \varepsilon \kappa^g$
        \item[(b)]\label{case:connected} $\OmegaIdx{g}$ \textbf{connected}: $\kappa^g_\varepsilon = \frac{1}{\varepsilon} \kappa^g$ 
    \end{itemize}
    \myitem{(A2)}\label{item:A2} The initial condition $\theta_{\e,0}\in L^2(\Omega)$ satisfies 
    \begin{linenomath*}\begin{equation*}
        C_0 \coloneqq \sup_{\varepsilon>0}{\left(\|\theta_{\varepsilon, 0}\|_{L^2(\Omega_\e)} +
        \frac{1}{\sqrt{\varepsilon}}\|\theta_{\varepsilon,0}\|_{L^2(\OmegaIdx{g})}
        \right)} < \infty.
    \end{equation*}\end{linenomath*}
    \myitem{(A3)}\label{item:A3} The volume source $f_\e\in L^2(S\times\Omega)$ satisfies
    \begin{linenomath*}\begin{equation*}
        C_f \coloneqq \sup_{\varepsilon>0}{\left(\|f_\varepsilon\|_{L^2(S \times\Omega_\e)} +
        \frac{1}{\sqrt{\varepsilon}}\|f_\varepsilon\|_{L^2(S \times\OmegaIdx{g})}
        \right)} < \infty.
    \end{equation*}\end{linenomath*}
    \myitem{(A4)}\label{item:A4} For the velocity $v_\varepsilon\in L^\infty(S\times \OmegaIdx{f})$ it holds
    \begin{linenomath*}\begin{equation*}
        C_v \coloneqq \sup_{\varepsilon>0}{\|v_\varepsilon\|_{L^\infty(S \times \OmegaIdx{f})}} < \infty.
    \end{equation*}\end{linenomath*}
    \myitem{(A5)}\label{item:A5} There is a limit function $\theta_0 \in L^2(\Omega)$ such that 
    $ \theta_{\varepsilon, 0} \to \theta_0$ for $\varepsilon \to 0$
    in $L^2(\Omega)$.
    Additionally,
    \begin{enumerate}
        \item[(a)] there exists a $\theta^g_0 \in L^2(\Sigma \times \GrainCell)$, such that ${\theta_{\varepsilon, 0}}_{|\Omega_\e^g} \twosc \theta^g_0$ for $\varepsilon \to 0$.
        For the definition of the two-scale convergence (denoted by $\twosc$) in a thin layer, see \cref{def:two_scale} in \cref{sec:homogenization}.
        \item[(b)] there is a function $\theta^g_0 \in L^2(\Sigma)$, such that ${\theta_{\varepsilon, 0}}_{|\Omega_\e^g} \twosc \theta^g_0$ for $\varepsilon \to 0$.
    \end{enumerate}
    \myitem{(A6)}\label{item:A6} There is a limit $f \in L^2(S\times \Omega)$ such that $f_\varepsilon \to f$ in $L^2(S\times \Omega)$. 
    Additionally,
    \begin{enumerate}
        \item[(a)] a function $f^g\in L^2(S\times\Sigma\times \GrainCell)$ exists with ${f_\varepsilon}_{|\Omega_\e^g} \twosc f^g$ for $\varepsilon \to 0$.
        \item[(b)] there is a $f^g\in L^2(S\times\Sigma)$ such that ${f_\varepsilon}_{|\Omega_\e^g} \twosc f^g$ for $\varepsilon \to 0$.
    \end{enumerate}
    \myitem{(A7)}\label{item:A7} There is a function $v\in L^2(S;{H^1(\Omega^{f})})^d$ with $\nabla \cdot v = 0$, such that $\hat{v}_\e\to v \text{ in }\ L^2(S\times\Omega^{f})$ for $\varepsilon \to 0$.
\end{enumerate}
The scaling for the heat conductivity $\kappa_\varepsilon^g$ in \ref{item:A1}, for both cases, is often used in the literature; see \cite{Gahn17, Gahn18, Neuss07} for similar situations. This scaling aims to keep the influence of the diffusion in the limiting process. Different types of scaling may be considered in future work, similar to the studies in \cite{Gahn17}.
The remaining conditions \ref{item:A2}--\ref{item:A4} are needed to obtain solution bounds with specific $\varepsilon$ dependencies, cf. \cref{thm:existence}. The assumptions for the initial temperature $\theta^g_{\varepsilon, 0}$ and heat source $f^g_\varepsilon$ are, for example, fulfilled by constant functions, since the volume of $\OmegaIdx{g}$ scales with $\varepsilon$. The Assumptions \ref{item:A5}--\ref{item:A7} are needed to pass to the limit $\varepsilon \to 0$.
\begin{remark}
    Regarding the velocity limit $v$, generally the fluid movement would be modeled via the (Navier--)Stokes equation. It is well known that, at least for a small Reynolds number in regard to $\varepsilon$, an effective velocity exists and the velocity would be zero at $\Sigma$. Higher-order correctors could be used, which would lead to a slip velocity along $\Sigma$; see \cite[Section 2 and 3]{Achdou98}.
\end{remark}
In the following, the subscript $\Tilde{\#}$ indicates that a function space contains functions that are periodic in the directions $1,\dots, d-1$, for example
\begin{linenomath*}\begin{equation*}
    H^1_{\Tilde{\#}}(Y^d) \coloneqq \left\{ 
        u \in H^1_{\text{loc}}(\R^d) : u_{|Y^d} \in H^1(Y^d), u(y+e_i) = u(y) \text{ for almost all } y \in Y^d \text{ and } i=1,\dots, d-1
    \right\}.  
\end{equation*}\end{linenomath*}
\subsection{Auxiliary results}
Here, we collect two auxiliary lemmas regarding extension and trace operators needed to carry out the following analysis and homogenization. 
\begin{lemma}[Extension operator]\label{lem:extension_operators}
    There exists a family of linear extension operators ${\mathbb{E}_\e:H^1(\Omega_\e) \to H^1(\Omega)}$ such that
    \begin{linenomath*}\begin{equation*}
        \|\mathbb{E}_\e \phi\|_{H^1(\Omega)} \leq C_{ext} \|\phi\|_{H^1(\Omega_\e)} \quad \text{for all } \phi \in H^1(\Omega_\e),
    \end{equation*}\end{linenomath*}
    where $C_{ext}>0$ is independent of $\varepsilon$.
\end{lemma}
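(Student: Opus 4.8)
The plan is to build the extension operator cellwise, using the standard homogenization strategy of constructing a uniform extension on a single reference cell and then rescaling and patching. Recall that $\Omega_\e = \OmegaIdx{f}\cup\OmegaIdx{s}$ is obtained from $\Omega$ by removing the grain layer $\OmegaIdx{g}$, which is supported in the $\e$-neighbourhood $\Tilde\Omega\times(-\e,\e)$ of $\Sigma$. Away from this layer, $\Omega_\e$ coincides with $\Omega$, so no extension is needed there; the only work is inside the layer. The key geometric input is that $\OmegaIdx{g}$ is an $\e$-periodic (in $\tilde x$) rescaling of the fixed reference configuration: in each cell $\e(\tilde k,0)+\e(Y^{d-1}\times(-1,1))$ the complement of the grain is a translate/dilate of the fixed Lipschitz domain $(Y^{d-1}\times(-1,1))\setminus\overline{\GrainCell}$, which by hypothesis is a connected Lipschitz domain (with both the $y_d>0$ and $y_d<0$ parts nonempty and connected).

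First I would fix the reference geometry. Let $U := (Y^{d-1}\times(-1,1))$ and $U^\ast := U\setminus\overline{\GrainCell}$, which is a bounded connected Lipschitz domain. By the classical Jones/Stein extension theorem for Sobolev spaces on Lipschitz (in fact $(\epsilon,\delta)$) domains, there is a bounded linear extension operator $E: H^1(U^\ast)\to H^1(\R^d)$, hence in particular $E: H^1(U^\ast)\to H^1(U)$ with a norm bound $C_0$ depending only on the reference geometry. In Case (b), where the reference cell has the matching-faces property $\{y\in\partial Z: y_i=1\}=\{y\in\partial Z: y_i=0\}+e_i$, one should instead take the extension that respects periodicity in the $\tilde y$ directions — i.e. work on the torus cell and obtain $E: H^1_{\tilde\#}(U^\ast)\to H^1_{\tilde\#}(U)$ — so that the patched-together global extension is single-valued across cell interfaces; alternatively, since the faces $y_i=0,1$ of $Z$ match up, $U^\ast$ glues to a Lipschitz domain on the $(d-1)$-torus and one applies Stein's theorem there. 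In Case (a) the grain is compactly contained in the open cell, so the complement already contains a neighbourhood of $\partial U$ and no periodicity matching is needed; plain Stein extension suffices, and the extended function automatically agrees on cell faces because it coincides there with the original function.

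Next I would rescale. For a fixed $\e=\e_n$ and each $\tilde k$ with $\e(\tilde k,0)+\e U\subset\Tilde\Omega\times(-\e,\e)$, define the affine map $\Phi_{\e,\tilde k}(y)=\e(\tilde k,0)+\e y$ carrying $U$ onto the physical cell and $U^\ast$ onto the corresponding piece of $\OmegaIdx{f}\cup\OmegaIdx{s}$ inside that cell. Given $\phi\in H^1(\Omega_\e)$, pull back $\phi\circ\Phi_{\e,\tilde k}$ to $U^\ast$, apply $E$, and push forward to get an extension on the physical cell; glue these over all cells, and set $\mathbb E_\e\phi=\phi$ outside the layer. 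The gluing is consistent in $H^1$ because (in Case (b)) the periodicity-respecting extension has matching traces on shared cell faces $\{x_i=\e k_i\}$, and on the horizontal faces $\tilde\Omega\times\{\pm\e\}$ (and on $\partial\Omega$) the extension coincides with $\phi$ itself. For the norm estimate, a change of variables shows that the dilation by $\e$ scales the $L^2$ norm of the function by $\e^{d/2}$ and the $L^2$ norm of the gradient by $\e^{d/2-1}$; since $E$ is bounded on the reference cell, one gets on each cell $\|\mathbb E_\e\phi\|_{L^2}^2 + \e^2\|\nabla\mathbb E_\e\phi\|_{L^2}^2 \le C_0^2(\|\phi\|_{L^2(U^\ast\text{-cell})}^2+\e^2\|\nabla\phi\|^2_{L^2})$, and dropping the favourable $\e^2$ on the left while keeping it on the right gives $\|\mathbb E_\e\phi\|_{H^1(\text{cell})}\le C_0\e^{-?}$... — and here is the subtlety: the naive scaling loses a factor $\e^{-1}$ on the gradient. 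One must therefore track the scaling carefully and note that the relevant estimate is the \emph{homogeneous} one, $\|\nabla\mathbb E_\e\phi\|_{L^2(\text{cell})}\le C_0\|\nabla\phi\|_{L^2(U^\ast\text{-cell})}$, which \emph{is} scale-invariant because both sides scale the same way; the zero-order term is handled either by subtracting cell averages before extending and invoking Poincaré on $U^\ast$, or simply by the scale-invariant bound $\|\mathbb E_\e\phi\|_{L^2}\le C_0\|\phi\|_{L^2}$ which also holds since the $L^2{\to}L^2$ part of $E$ is scale-invariant. Summing the squared cell estimates over the $\mathcal O(\e^{-(d-1)})$ cells in the layer and adding the trivial contribution from $\Omega\setminus(\text{layer})$ yields $\|\mathbb E_\e\phi\|_{H^1(\Omega)}\le C_{ext}\|\phi\|_{H^1(\Omega_\e)}$ with $C_{ext}$ depending only on $U^\ast$ (through $C_0$) and on $d$, hence independent of $\e$.

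The main obstacle is bookkeeping rather than a deep difficulty: one must make sure the cellwise extensions glue into a genuine $H^1(\Omega)$ function — which in Case (b) forces the use of a periodicity-preserving reference extension (Stein on the torus cell, legitimate because the matching-faces hypothesis makes $U^\ast$ glue up to a Lipschitz domain on $\mathbb T^{d-1}\times(-1,1)$) — and one must handle the partial cells near $\partial\Tilde\Omega$, which is fine since $\Tilde\Omega$ is perfectly tiled by $\e Y^{d-1}$ so all cells are whole in the $\tilde x$-directions. A secondary point is the correct $\e$-scaling of the norms: the statement only claims boundedness of $\mathbb E_\e: H^1(\Omega_\e)\to H^1(\Omega)$ uniformly in $\e$ (no $\e$-weights), and this is exactly what the scale-invariant homogeneous estimate plus the scale-invariant $L^2$ estimate deliver, so no loss occurs. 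Once these are in place the proof is a routine assembly; I would present it as: (i) reference extension $E$ on $U^\ast$ (periodic variant in Case (b)); (ii) rescaling and the scale-invariance of both the $L^2$ and the gradient bounds; (iii) gluing and summation over cells.
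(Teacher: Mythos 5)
Your construction is correct, but it takes a genuinely different (much more explicit) route than the paper: the paper's entire proof is the observation that $\Omega_\e$ is Lipschitz by construction together with a citation of the known uniform extension theorems for periodically perforated domains (Acerbi et al.\ for the connected case, Cioranescu--Donato for the disconnected case). What you have written is essentially a self-contained proof of those cited results, specialised to the present layer geometry: reference-cell extension ($\tilde y$-periodic for Case (b), plain Stein for Case (a), where the compact containment $\overline{Z}\subset Y^{d-1}\times(-1,1)$ makes the traces on cell faces automatically consistent), rescaling, gluing, and summation over the $\mathcal O(\e^{-(d-1)})$ cells, with the identity outside the layer. You also correctly isolate the one point where a naive argument fails --- the loss of a factor $\e^{-1}$ when rescaling an inhomogeneous $H^1$ bound --- and repair it in the standard way, via the scale-invariant homogeneous gradient estimate obtained from mean subtraction and Poincar\'e--Wirtinger on the connected reference set $U^{\ast}$. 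Two small remarks. First, your second option for the zero-order term (``the $L^2\to L^2$ part of $E$ is scale-invariant'') is slightly glib if $E$ is the mean-corrected operator: what one actually gets on a cell is $\|\mathbb E_\e\phi\|_{L^2}\leq C(\|\phi\|_{L^2}+\e\|\nabla\phi\|_{L^2})$, which is still sufficient for the claimed inhomogeneous bound, so nothing breaks. Second, your argument (like the lemma itself) requires reading $\Omega_\e$ as $\interior(\overline{\Omega}\setminus\Omega^g_\e)$, i.e.\ connected through the contact interface $\Sigma^0_\e$; if one took $\Omega^f_\e\cup\Omega^s_\e$ literally as a disjoint union of open sets, the statement would be false, since an extension to $H^1(\Omega)$ forces matching traces on $\Sigma^0_\e$. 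Your reading is clearly the intended one. The payoff of your approach over the paper's is that it is self-contained and makes the $\e$-uniformity of $C_{ext}$ transparent; the cost is length, which is why the paper defers to the literature.
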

\begin{proof}
    By construction, $\Omega_\e$ has a Lipschitz boundary.
    Therefore, we can utilize available results for extension operators, see \cite{acerbi92} or \cite[Theorem 2.2]{EB14} 
    for the connected and \cite[Theorem 
    2.10]{Cioranescu01} for the disconnected case, and the statement follows.
\end{proof}
\begin{lemma}[Trace estimate]\label{lem:trace_estimate}
Let $\{x\in \partial \GrainCell: n_\Gamma(x)\cdot e_d = 0\}$ be a null set in dimension $d-1$. Then,
there is an $\e$-independent $C_{tr}$ such that, for all $\phi \in H^1(\Omega_\e)$, it holds
\begin{linenomath*}\begin{equation*}
    \|\phi\|_{L^2(\Sigma_\e^f)}+\|\phi\|_{L^2(\Sigma_\e^s)} \leq C_{tr} \| \phi\|_{H^1(\Omega_\e)}.
\end{equation*}\end{linenomath*}
\end{lemma}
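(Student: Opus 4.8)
The plan is to reduce the $\e$-independent trace estimate on the rough interfaces $\Sigma_\e^f$ and $\Sigma_\e^s$ to the standard (scaled) trace inequality on a fixed reference configuration, exploiting the assumption that the grain boundary $\partial\GrainCell$ is, away from a null set, a finite union of graphs of height functions over $[0,1]^{d-1}$. First I would localize: by periodicity, $\Sigma_\e^f \cup \Sigma_\e^s$ decomposes into $\mathcal{O}(\e^{-(d-1)})$ copies of the scaled reference interface pieces $\e(\Gammafg \cup \Gammasg)$ sitting in the scaled cells $\e(Y^{d-1}\times(-1,1))$, each adjacent to a corresponding chunk of $\Omega_\e$. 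So it suffices to prove, for the fixed reference geometry, a trace estimate of the form $\|\psi\|_{L^2(\Gammafg\cup\Gammasg)} \le C(\|\psi\|_{L^2(Z^{\pm})} + \|\nabla\psi\|_{L^2(Z^{\pm})})$ where $Z^{\pm}$ are the fluid/solid-side reference regions, and then track how the two-sided rescaling $x = \e y$ acts on each term.

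The main work is the reference trace estimate. Using the graph assumption, I would cover $\Gammafg$ (and similarly $\Gammasg$) by finitely many pieces, each of which is the graph $\{(\tilde y, h_j(\tilde y)) : \tilde y \in U_j\}$ of a Lipschitz function $h_j$ over an open subset $U_j \subset [0,1]^{d-1}$, with the region directly "below" the graph (toward $\Sigma$, i.e. toward $y_d=0$) contained in the reference fluid region; the condition that $\{n_\Gamma \cdot e_d = 0\}$ is a null set is precisely what guarantees such a graph decomposition covers the interface up to measure zero and that the $h_j$ are genuine functions. On each graph piece one has the elementary fundamental-theorem-of-calculus bound: for a.e. $\tilde y \in U_j$, writing $\phi(\tilde y, h_j(\tilde y)) = \phi(\tilde y, 0) + \int_0^{h_j(\tilde y)} \partial_{y_d}\phi(\tilde y, t)\,\di t$ (along the vertical segment, which lies in the reference region), squaring, integrating over $U_j$, and using Cauchy–Schwarz plus the fact that the Jacobian $\sqrt{1+|\nabla h_j|^2}$ is bounded since $h_j$ is Lipschitz, yields $\|\phi\|_{L^2(\text{graph piece})}^2 \lesssim \|\phi\|_{L^2(Y^{d-1}\times\{0\})}^2 + \|\nabla\phi\|_{L^2(\text{region below})}^2$. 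Summing over the finitely many pieces and over the fluid/solid sides, and then controlling the flat-interface term $\|\phi\|_{L^2(Y^{d-1}\times\{0\})}$ by $\|\phi\|_{H^1}$ of either adjacent region via the usual trace theorem, gives the reference estimate with a constant depending only on $Z$.

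Finally I would undo the localization and rescaling. Under $x=\e y$, a $(d-1)$-dimensional surface integral picks up $\e^{d-1}$, a $d$-dimensional volume integral picks up $\e^d$, and the gradient transforms as $\nabla_x = \e^{-1}\nabla_y$; so the scaled reference inequality becomes $\|\phi\|_{L^2(\e\Gamma\text{-piece})}^2 \lesssim \e \|\phi\|_{L^2(\e\Sigma\text{-piece})}^2 + \e^{-1}\|\nabla_x\phi\|_{L^2(\e Z^{\pm}\text{-piece})}^2 \cdot \e^{2}\cdot\e^{-2}$ — more carefully, the volume term is $\|\nabla_x\phi\|^2_{L^2}$ with no bad $\e$ power once one also accounts that we could instead integrate the vertical segment over a length $\mathcal{O}(\e)$, which I would do to keep the zeroth-order term clean. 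Summing the $\mathcal{O}(\e^{-(d-1)})$ localized estimates reconstitutes $\|\phi\|^2_{L^2(\Sigma_\e^f\cup\Sigma_\e^s)} \le C(\|\phi\|^2_{L^2(\Omega_\e)} + \|\nabla\phi\|^2_{L^2(\Omega_\e)}) = C\|\phi\|^2_{H^1(\Omega_\e)}$ with $C$ independent of $\e$. The one point demanding care — and the place where the null-set hypothesis is genuinely used — is ensuring the graph decomposition of $\partial\GrainCell$ is uniform (finitely many pieces, uniform Lipschitz bounds) so that the reference constant does not secretly depend on how finely we subdivide; I would handle this by an explicit finite cover argument, and I expect the resolution of general Lipschitz boundaries (alluded to in the statement) would instead use a bi-Lipschitz flattening in exchange for a more involved bookkeeping of the transformed gradient.
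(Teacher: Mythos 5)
There is a genuine gap, and it sits exactly at the step you flag as "elementary": the fundamental-theorem-of-calculus bound along the vertical segment. You write $\phi(\tilde y, h_j(\tilde y)) = \phi(\tilde y,0) + \int_0^{h_j(\tilde y)}\partial_{y_d}\phi(\tilde y,t)\,\di t$ and assert that the segment "lies in the reference region". It does not: a point of $\Gammafg$ lies on $\partial\GrainCell$ with $y_d>0$, and the vertical segment joining it to the plane $\{y_d=0\}$ passes through the interior of the grain (think of the upper hemisphere of a ball: directly below it is the ball itself, and below that the solid). The region below the graph is $\GrainCell$, not the fluid part of the cell, and the endpoint $(\tilde y,0)$ is likewise buried inside $\OmegaIdx{g}$. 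Since $\phi\in H^1(\Omega_\e)$ carries no information on $\OmegaIdx{g}$, neither $\partial_{y_d}\phi$ along the segment nor the "flat-interface term" $\|\phi\|_{L^2(Y^{d-1}\times\{0\})}$ is controlled by $\|\phi\|_{H^1(\Omega_\e)}$. The paper's proof resolves exactly this by first applying the uniform extension operator $\mathbb{E}_\e:H^1(\Omega_\e)\to H^1(\Omega)$ of \cref{lem:extension_operators}, noting $\phi=\mathbb{E}_\e\phi$ on $\Sigmafg$, and running your FTC argument on $\mathbb{E}_\e\phi$, whose gradient is controlled on all of $\Omega$ including the grains. Without the extension (or some substitute, e.g.\ connecting each graph point to a fixed height \emph{inside} the fluid, which fails for intermediate graph layers), the argument does not close.

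A second, linked problem is your localize--rescale--sum bookkeeping. The reference trace estimate $\|\psi\|_{L^2(\Gamma)}\lesssim\|\psi\|_{L^2(Z^\pm)}+\|\nabla\psi\|_{L^2(Z^\pm)}$ rescales under $x=\e y$ to $\|\phi\|^2_{L^2(\e\Gamma)}\lesssim \e^{-1}\|\phi\|^2_{L^2(\e Z^\pm)}+\e\|\nabla\phi\|^2_{L^2(\e Z^\pm)}$; summing over the $\mathcal{O}(\e^{-(d-1)})$ cells leaves a factor $\e^{-1}$ on the $L^2$ term over the thin layer, which is \emph{not} absorbed by $\|\phi\|^2_{H^1(\Omega_\e)}$. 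Your parenthetical "which I would do to keep the zeroth-order term clean" gestures at this but does not fix it. The fix is to anchor the zeroth-order term to the fixed macroscopic surface: the paper bounds $\|\mathbb{E}_\e\phi(\cdot,0)\|_{L^2}$ by the $\e$-independent trace on $\partial\Omega^f$, and the gradient term picks up the good factor $\sqrt\e$ from the $\mathcal{O}(\e)$ length of the vertical segment. That anchoring to $\Sigma$ is precisely what forces the segment through the grain and hence forces the extension operator; the two missing ingredients are really one.
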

\begin{proof}[Proof of \cref{lem:trace_estimate}]
    Under the given assumptions, the estimate follows by using the extension operator from \cref{lem:extension_operators} and a coordinate transform followed by a standard trace estimate.
    As there are some technical details in the proof, we present it here.
    We only present the arguments from the side of the subdomain $\OmegaIdx{f}$; the estimate on $\Sigmasg$ follows in the same way.
    
    Given the assumptions on $\partial \GrainCell$, we can find $Y^{d-1}$-periodic Lipschitz functions $\gamma_i:\omega_i\subseteq \overline{Y}^{d-1} \to [0, 1]$ for $i \in I$, with finite $I \subset \mathbb{N}$, such that
    \begin{linenomath*}\begin{equation}
    \label{eq:grap_rep_eps}
        \Sigmafg = \bigcup_{i \in I} \, \Sigma^f_{i, \e} \coloneqq \bigcup_{i \in I} \, \left\{
            \left(\tilde{x}, \varepsilon\gamma_i\left(\frac{\tilde{x}}{\e}\right)\right) : \tilde{x} \in \omega_{i,\e}
            \right\},
    \end{equation}\end{linenomath*}
    where $\omega_{i,\e}$ is given by 
    \begin{linenomath*}\begin{equation*}
        \omega_{i,\e} = \text{int}\left(\bigcup_{\Tilde{k}\in\Z^{d-1}} \varepsilon \left(\overline{\omega_i} + \Tilde{k}\right)\right) \cap \Tilde{\Omega}.
    \end{equation*}\end{linenomath*}
    Using the identity (\ref{eq:grap_rep_eps}), we can build upon the ideas used in \cite[Proposition 2]{Donato19} to show a trace estimate in our case.
    First, we utilize the extension operators $\mathbb{E}_\e$ of \cref{lem:extension_operators} where it holds that
    \begin{linenomath*}\begin{equation*}
        \|\mathbb{E}_\e \phi\|_{H^1(\Omega)} \leq C_{ext} \|\phi\|_{H^1(\Omega_\e)} \quad \text{and} \quad \|\phi\|_{L^2(\Sigmafg)} = \|\mathbb{E}_\e \phi\|_{L^2(\Sigmafg)}.
    \end{equation*}\end{linenomath*}
    Next, we split up the integral over $\Sigmafg$ into integrals over multiple sections, each given by a graph of a height function, 
    \begin{linenomath*}\begin{equation*}
        \|\mathbb{E}_\e \phi\|_{L^2(\Sigmafg)}^2 = \sum_{i \in I} \|\mathbb{E}_\e \phi\|_{L^2(\Sigma^f_{i, \e})}^2.
    \end{equation*}\end{linenomath*}
    On each $\Sigma^{f}_{i, \e}$ we can compute the integral by the parameterization given by $\gamma_i$, which leads to
    \begin{linenomath*}\begin{equation*}
        \|\mathbb{E}_\e \phi\|_{L^2(\Sigma^{f}_{i, \e})}^2 
        = 
        \int_{\omega_{i, \e}} \left(\mathbb{E}_\e \phi\right)^2 \left(\tilde{x}, \varepsilon \gamma_i\left(\frac{\tilde{x}}{\varepsilon}\right)\right)
        \, \sqrt{1 + |\nabla_{\tilde{y}} \gamma_i(\Tilde{y})|^2_{\Tilde{y}=\frac{\title{x}}{\varepsilon}}}\di{\Tilde{x}} 
        \leq 
        C_{\gamma,i} \int_{\omega_{i, \e}} \left(\mathbb{E}_\e \phi\right)^2 \left(\tilde{x}, \varepsilon \gamma_i\left(\frac{\tilde{x}}{\varepsilon}\right)\right)
        \di{\Tilde{x}}
    \end{equation*}\end{linenomath*}
    where $C_{\gamma,i}<\infty$ since $\gamma_i$ is Lipschitz continuous and independent of $\varepsilon$. To further estimate the right-hand side, we use that Sobolev functions are absolutely continuous on almost all lines. Applying this argument in the direction $x_d$ together with the triangle inequality, we obtain
     \begin{linenomath*}\begin{equation}
     \label{eq:split_integral}
        \left\|\mathbb{E}_\e \phi\left(\tilde{x}, \varepsilon \gamma_i\left(\frac{\tilde{x}}{\varepsilon}\right)\right)\right\|_{L^2(\omega_{i, \e})}
        \leq 
        \left\|\mathbb{E}_\e \phi\left(\tilde{x}, 0\right)\right\|_{L^2(\omega_{i, \e})} 
        +
        \left\| \int_0^{\varepsilon \gamma_i\left(\frac{\tilde{x}}{\varepsilon}\right)} | \nabla( \mathbb{E}_\e \phi) \left(\tilde{x}, x_d\right) \cdot e_d| \di{x_d} \right\|_{L^2(\omega_{i, \e})}
    \end{equation}\end{linenomath*}
    The first integral on the right-hand side of Eq (\ref{eq:split_integral}) can be bounded with a trace estimate on the domain $\Omega^f$, which is independent of $\varepsilon$, 
    \begin{linenomath*}\begin{equation*}
        \left\|\mathbb{E}_\e \phi\left(\tilde{x}, 0\right)\right\|_{L^2(\omega_{i, \e})} 
        \leq
        \left\|\mathbb{E}_\e \phi\right\|_{L^2(\partial \Omega^f)}   \leq C         \left\|\mathbb{E}_\e \phi\right\|_{H^1(\Omega^f)} 
        \leq C C_{ext} \|\phi\|_{H^1(\Omega_\e)}. 
    \end{equation*}\end{linenomath*}
    In the second integral of  Eq \eqref{eq:split_integral}, we can integrate always to the height $\varepsilon$ instead of $\varepsilon \gamma_i$, then apply the Hölder inequality, and lastly estimate the integral over the small layer by the integral over the whole domain to obtain a bound,
    \begin{linenomath*}\begin{align*}
        \left\| \int_0^{\varepsilon \gamma_i\left(\frac{\tilde{x}}{\varepsilon}\right)} | \nabla( \mathbb{E}_\e \phi) \left(\tilde{x}, x_d\right) \cdot e_d| \di{x_d} \right\|_{L^2(\omega_{i, \e})} 
        &\leq \left\| \int_0^{\varepsilon} | \nabla( \mathbb{E}_\e \phi) \left(\tilde{x}, x_d\right) \cdot e_d| \di{x_d} \right\|_{L^2(\omega_{i, \e})} 
        \\
        &\leq \sqrt{\varepsilon} \left\| \nabla( \mathbb{E}_\e \phi) \cdot e_d\right\|_{L^2(\omega_{i, \e} \times (0, \varepsilon))} 
        \\
        &\leq \sqrt{\e} \left\| \nabla( \mathbb{E}_\e \phi)\right\|_{L^2(\Omega^f)} 
        \leq \sqrt{\e} C_{ext} \|\phi\|_{H^1(\Omega_\e)}.
    \end{align*}\end{linenomath*}
    By bringing everything together, we get the desired estimate for the trace operator
    \begin{linenomath*}\begin{equation*}
        \|\phi\|_{L^2(\Sigmafg)}^2 
        = \|\mathbb{E}_\e \phi\|_{L^2(\Sigmafg)}^2 
        = \sum_{i \in I} \|\mathbb{E}_\e \phi\|_{L^2(\Sigma^{f}_{i, \e})}^2
        \leq \sum_{i \in I} C_{\gamma,i} \left(C + \sqrt{\varepsilon}\right)^2 C_{ext}^2  \|\phi\|_{H^1(\Omega_\e)}^2 \eqqcolon C_{tr}^2 \|\phi\|_{H^1(\Omega_\e)}^2. 
    \end{equation*}\end{linenomath*}
    Since $I$ is finite, we have $C_{tr}< \infty$, and $C_{tr}$ can be bounded independent of $\varepsilon$ for all $\e < \e_0$.
\end{proof}
The assumption that the vertical interface sections have measure zero is important 
for the proof of \cref{lem:trace_estimate}, since it allows us to transform the integral over $\Sigmafg$ into an integral along the flat surface $\Sigma$. If this assumption is not fulfilled, we can not represent a vertical boundary over a graph that is defined on a subsection of $\Sigma$. However, it should be possible to approximate the trace on a vertical section with a slightly tilted section that can be represented as a graph along $\Sigma$, albeit with further technical estimates.
\section{Analysis of the micro model}\label{sec:analysis}
To carry out the homogenization, we first show that our model is well-posed and derive solution estimates. First, we introduce the weak formulation of the system (\ref{eq:epsilon-problem}). To this end, we consider the solution space  
\begin{linenomath*}\begin{equation*}
    W_\varepsilon = \left\{ u\in L^2(S\times\Omega) \, : \, \partial_tu  \in L^2(S\times\Omega),\, u|_{\Omega_\e} \in L^2(S; H^1(\Omega_\e)), \,  u|_{\OmegaIdx{g}} \in L^2(S; H^1(\OmegaIdx{g}))
    \right\}.
\end{equation*}\end{linenomath*}
We call $\theta_\e\in W_\varepsilon$ a weak solution of the problem (\ref{eq:epsilon-problem}) if $\theta_\e(0, \cdot) = \theta_{\varepsilon, 0}$ almost everywhere in $\Omega$ and 
\begin{linenomath*}\begin{multline}\label{eq:weak_formulation}
        (\rho c \partial_t \theta_\e, \varphi)_{\Omega_\e} +
        \frac{1}{\varepsilon}(\rho c \partial_t \theta_\e, \varphi)_{\OmegaIdx{g}}
        +
        (\kappa\nabla \theta_\e, \nabla\varphi)_{\Omega_\e}
        - (\rho c v_\varepsilon \theta_\e, \nabla\varphi)_{\OmegaIdx{f}}
        \\
        + (\e^{2\gamma}\kappa\nabla\theta_\e, \nabla\varphi)_{\OmegaIdx{g}} + (\alpha \jump{\theta_\varepsilon}, \jump{\varphi})_{\Sigmafg} + (\alpha \jump{\theta_\varepsilon}, \jump{\varphi})_{\Sigmasg}
        = (f_\varepsilon, \varphi)_{\Omega_\e}+\frac{1}{\varepsilon}(f_\varepsilon, \varphi)_{\OmegaIdx{g}}
\end{multline}\end{linenomath*}
holds for all $\varphi\in W_\e$ and almost all $t\in S$.
Here, $\gamma=\frac{1}{2}$ in Case (a) and $\gamma=-\frac{1}{2}$ in Case (b).
\begin{theorem}[Existence and bounds]\label{thm:existence}
Let the Assumptions \ref{item:A1}--\ref{item:A4} be fulfilled.
There exists a unique weak solution $\theta_\varepsilon \in W_\varepsilon$ satisfying $\theta_\e(0, \cdot) = \theta_{\varepsilon, 0}$ a.e. and Eq \eqref{eq:weak_formulation}.
In addition, it holds
\begin{linenomath*}\begin{equation}\label{eq:solution_esitmate}
        \|\theta_\e\|_{L^\infty(S; L^2(\Omega_\e))} +
        \|\nabla \theta_\e\|_{L^2(S; L^2(\Omega_\e))}
        + \e^{-\frac{1}{2}} \|\theta_\e\|_{L^2(S\times \OmegaIdx{g})} 
        + \varepsilon^{\gamma} \|\nabla \theta_\e\|_{L^2(S\times \OmegaIdx{g})}
        + \|\jump{\theta_\varepsilon}\|_{L^2(S\times(\Sigmafg \cup \Sigmasg))} \leq C,
\end{equation}\end{linenomath*}
for a $C<\infty$ independent on $\varepsilon$, with $\gamma=\frac{1}{2}$ in Case (a) and $\gamma=-\frac{1}{2}$ in Case (b). For the time derivative it holds
\begin{linenomath*}\begin{equation}\label{eq:time_derivative_estimate}
    \|\partial_t \theta_\e\|_{L^2(S\times\Omega_\e)}
    +
    \frac{1}{\sqrt{\varepsilon}}\|\partial_t \theta_\e\|_{L^2(S\times \OmegaIdx{g})} \leq C.
\end{equation}\end{linenomath*}
\end{theorem}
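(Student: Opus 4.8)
The plan is to combine a Galerkin approximation with energy estimates. First I would set up a Galerkin scheme using a basis of $W_\e$ (or of the Hilbert space $\{u|_{\Omega_\e}\in H^1(\Omega_\e),\,u|_{\OmegaIdx{g}}\in H^1(\OmegaIdx{g})\}$), producing finite-dimensional approximations $\theta_\e^N$ solving the corresponding ODE system; existence on a maximal interval is standard since the bilinear form coefficients are bounded. To pass to the limit $N\to\infty$ I need uniform-in-$N$ bounds, which come from the same energy estimate used to prove \eqref{eq:solution_esitmate}, so the heart of the matter is the a priori estimate.

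For the a priori estimate I would test \eqref{eq:weak_formulation} with $\varphi=\theta_\e$. The time-derivative terms become $\tfrac12\ddt\big(\|\sqrt{\rho c}\,\theta_\e\|_{\Omega_\e}^2 + \tfrac1\e\|\sqrt{\rho c}\,\theta_\e\|_{\OmegaIdx{g}}^2\big)$, the diffusion terms give $\|\sqrt{\kappa}\nabla\theta_\e\|_{\Omega_\e}^2 + \e^{2\gamma}\|\sqrt{\kappa}\nabla\theta_\e\|_{\OmegaIdx{g}}^2$, and the Robin terms give the nonnegative $\alpha\|\jump{\theta_\e}\|_{\Sigmafg}^2 + \alpha\|\jump{\theta_\e}\|_{\Sigmasg}^2$. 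The right-hand side is controlled by \ref{item:A3} using Cauchy–Schwarz and Young: $(f_\e,\theta_\e)_{\Omega_\e}\le \tfrac12\|f_\e\|_{\Omega_\e}^2+\tfrac12\|\theta_\e\|_{\Omega_\e}^2$ and $\tfrac1\e(f_\e,\theta_\e)_{\OmegaIdx{g}}\le \tfrac1{2\e}\|f_\e\|_{\OmegaIdx{g}}^2+\tfrac1{2\e}\|\theta_\e\|_{\OmegaIdx{g}}^2$, so the scaling factor $\tfrac1\e$ lands consistently on both sides and matches \ref{item:A2}, \ref{item:A3}. The convection term $(\rho c v_\e\theta_\e,\nabla\theta_\e)_{\OmegaIdx{f}}$ is the one to watch: since $\nabla\cdot v_\e=0$ and $v_\e=0$ on $\Sigmafs\cup\Sigmafg$, integration by parts gives $(\rho c v_\e\theta_\e,\nabla\theta_\e)_{\OmegaIdx{f}}=\tfrac12\int_{\OmegaIdx{f}}\rho c\,v_\e\cdot\nabla(\theta_\e^2)=-\tfrac12\int_{\OmegaIdx{f}}\rho c\,(\nabla\cdot v_\e)\theta_\e^2 + \text{boundary terms}=0$ — one must check the boundary pieces on $\partial\Omega$ vanish (homogeneous Neumann data is not quite enough, but the outer boundary term $\tfrac12\int_{\partial\Omega\cap\partial\OmegaIdx{f}}\rho c\,(v_\e\cdot\nu)\theta_\e^2$ requires either $v_\e\cdot\nu=0$ there or is absorbed; realistically one estimates it crudely by $C_v$ and a trace inequality with a small constant times $\|\nabla\theta_\e\|^2$). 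After integrating in time from $0$ to $t$ and using \ref{item:A2}, Grönwall's inequality yields \eqref{eq:solution_esitmate} with $C$ depending only on $C_0,C_f,C_v,T$ and the (fixed) coefficients, crucially independent of $\e$.

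For the time-derivative bound \eqref{eq:time_derivative_estimate} I would test \eqref{eq:weak_formulation} with $\varphi=\partial_t\theta_\e$ instead. Then the first two terms produce $\|\sqrt{\rho c}\,\partial_t\theta_\e\|_{\Omega_\e}^2 + \tfrac1\e\|\sqrt{\rho c}\,\partial_t\theta_\e\|_{\OmegaIdx{g}}^2$, which is exactly the left-hand side of \eqref{eq:time_derivative_estimate} up to constants; the diffusion and Robin terms become total time derivatives $\tfrac12\ddt(\|\sqrt\kappa\nabla\theta_\e\|_{\Omega_\e}^2+\e^{2\gamma}\|\sqrt\kappa\nabla\theta_\e\|_{\OmegaIdx{g}}^2 + \alpha\|\jump{\theta_\e}\|_{\Sigmafg}^2+\alpha\|\jump{\theta_\e}\|_{\Sigmasg}^2)$, integrable in time by \eqref{eq:solution_esitmate} once we bound the initial values of these quantities — here one needs regularity of $\theta_{\e,0}$, which is only $L^2$; the standard fix is to work at the Galerkin level where $\theta_\e^N(0)$ is the projection of $\theta_{\e,0}$ and these norms are finite for each $N$, then note the final bound only uses $\|\theta_{\e,0}\|$ through the already-established \eqref{eq:solution_esitmate} applied on a shifted problem, or alternatively use a difference-quotient argument in time. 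The source and convection terms on the right are handled by Cauchy–Schwarz and Young, absorbing $\tfrac12\|\partial_t\theta_\e\|^2$ (with the correct $\e$-weight) into the left side; the convection term $(\rho c v_\e\theta_\e,\nabla\partial_t\theta_\e)_{\OmegaIdx{f}}$ needs an integration by parts in $t$ to avoid $\nabla\partial_t\theta_\e$, producing $\ddt(\rho c v_\e\theta_\e,\nabla\theta_\e) - (\rho c\,\partial_t v_\e\,\theta_\e,\nabla\theta_\e) - (\rho c v_\e\partial_t\theta_\e,\nabla\theta_\e)$, all controlled by $C_v$, \eqref{eq:solution_esitmate}, and Young. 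Uniqueness follows by linearity: the difference of two solutions has zero data, and the first energy estimate forces it to vanish. The main obstacle I anticipate is bookkeeping the $\e$-weights so that every constant genuinely stays $\e$-independent — in particular ensuring the $\tfrac1\e$ on the grain terms and the $\e^{2\gamma}$ on the grain gradient never combine to produce a negative power of $\e$ on the right-hand side — together with the low regularity of $\theta_{\e,0}$ in the $\partial_t$-estimate, which is why the Galerkin framework (rather than testing the PDE directly) is the cleanest route.
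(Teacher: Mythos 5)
Your plan matches the paper's proof: the paper cites standard linear parabolic theory for existence and uniqueness, obtains \eqref{eq:solution_esitmate} by testing with $\varphi=\theta_\e$ and applying Gronwall, and gets \eqref{eq:time_derivative_estimate} by formally testing with $\varphi=\partial_t\theta_\e$ (referring to \cite[Lemma 3.1]{Neuss07} for the details you flag about initial-data regularity). The one difference is that the paper dispenses with your integration-by-parts treatment of the convection term and simply bounds $(\rho c v_\e\theta_\e,\nabla\theta_\e)_{\OmegaIdx{f}}\le C_v\|\rho c\,\theta_\e\|_{L^2(\OmegaIdx{f})}\|\nabla\theta_\e\|_{L^2(\OmegaIdx{f})}$ via \ref{item:A4}, absorbing the gradient with Young's inequality --- the fallback you yourself mention --- which sidesteps the outer-boundary term you were worried about.
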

\begin{proof}
    For each $\varepsilon > 0$, the problem (\ref{eq:weak_formulation}) is a standard linear heat equation with jump conditions, which is an example of a linear parabolic PDE.
    As a result, it has a unique weak solution under the given assumptions, see e.g., \cite[Proposition 2.3]{Showalter2014-pm}.
    The estimate (\ref{eq:solution_esitmate}) follows through an energy argument.
    Testing with $\varphi=\theta_\varepsilon$ and integrating over the time interval $(0, t)$ yields
    \begin{linenomath*}\begin{equation*}\label{eq:theorem_existence}
        \begin{split}
            \|\frac{\rho c}{2} \theta_\e(t) \|^2_{L^2(\Omega_\e)} + 
            \frac{1}{\varepsilon} \||\frac{\rho c}{2} \theta_\e(t)\|^2_{L^2(\OmegaIdx{g})} + 
            \|\kappa \nabla \theta_\e\|^2_{L^2((0, t)\times \Omega_\e)}
            +
            \e^{2\gamma} \|\kappa\nabla \theta_\e\|_{L^2((0, t)\times\OmegaIdx{g})}
            + \|\alpha \jump{\theta_\varepsilon}\|^2_{L^2((0, t)\times(\Sigmafg \cup \Sigmasg))}
            \\
            = (\rho c v_\varepsilon \theta_\e, \nabla \theta_\e)_{L^2((0, t)\times\OmegaIdx{f})} + 
            (f_\varepsilon, \theta_\e)_{L^2((0, t)\times \Omega_\e)}
            +
            \frac{1}{\varepsilon}(f_\varepsilon, \theta_\e)_{L^2((0, t)\times\OmegaIdx{g})}
            + \|\frac{\rho c}{2} \theta_{\varepsilon, 0}\|^2_{L^2(\Omega_\e)}
            + 
            \frac{1}{2\varepsilon}\|\rho c\theta_{\varepsilon, 0}\|^2_{L^2(\OmegaIdx{g})}.
        \end{split}
    \end{equation*}\end{linenomath*}
    Applying \ref{item:A4} to the convection term leads to
    \begin{linenomath*}\begin{equation*}
        (\rho c v_\varepsilon \theta_\e, \nabla \theta_\e)_{\OmegaIdx{f}} \leq C_v  \|\rho c\theta_\e \|_{L^2(\OmegaIdx{f})} \|\nabla \theta_\e \|_{L^2(\OmegaIdx{f})} \leq \frac{\kappa^f}{2} \|\nabla \theta_\e \|_{L^2(\OmegaIdx{f})}^2 + \frac{(\rho^f c^f C_v)^2}{2\kappa_f} \|\theta_\e \|^2_{L^2(\OmegaIdx{f})}.
    \end{equation*}\end{linenomath*}
    Now, using the above inequality and the Assumptions \ref{item:A1}--\ref{item:A3} in combination with Gronwall's lemma, we arrive at the estimate (\ref{eq:solution_esitmate}).
    The estimate (\ref{eq:time_derivative_estimate}) follows in a similar way by {formally} testing with $\varphi=\partial_t\theta_\varepsilon$; see also \cite[Lemma 3.1]{Neuss07}.
\end{proof}
The previous Theorem estimates the jump over the edges $\Sigmafg$ and $\Sigmasg$; the trace can also be bounded independent of $\varepsilon$.  
\begin{lemma}[Estimate on $\Sigmafg$ and $\Sigmasg$]\label{lem:interface_estimate}
Let the Assumptions \ref{item:A1}--\ref{item:A4} be satisfied. For the solution $\theta_\varepsilon \in W_\varepsilon$ of Eq  \eqref{eq:weak_formulation}, it holds on the interfaces $\Sigmafg$ and $\Sigmasg$ that
\begin{linenomath*}\begin{equation*}
    \|\thetaIdx{f}\|_{L^2(S \times \Sigmafg)} + \|\thetaIdx{g}\|_{L^2(S \times \Sigmafg)} + \|\thetaIdx{s}\|_{L^2(S \times \Sigmasg)} + \|\thetaIdx{g}\|_{L^2(S \times \Sigmasg)} \leq C,
\end{equation*}\end{linenomath*}
for a $C<\infty$ independent of $\varepsilon$.
\end{lemma}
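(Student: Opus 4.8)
The plan is to obtain this as a direct corollary of \cref{thm:existence} and the $\varepsilon$-independent trace estimate \cref{lem:trace_estimate}, handling the fluid/solid-side traces and the grain-side traces separately.

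First I would bound $\thetaIdx{f}$ on $\Sigmafg$ and $\thetaIdx{s}$ on $\Sigmasg$. Since $\theta_\e\in W_\e$, for almost every $t\in S$ the restriction $\theta_\e(t)|_{\Omega_\e}$ belongs to $H^1(\Omega_\e)$, so \cref{lem:trace_estimate} may be applied at almost every time slice; note that the trace constant $C_{tr}$ there does not depend on $\varepsilon$. Squaring, integrating over $S$, and using that $S=(0,T)$ is bounded, this yields
\begin{linenomath*}\begin{equation*}
    \|\thetaIdx{f}\|_{L^2(S\times\Sigmafg)}^2 + \|\thetaIdx{s}\|_{L^2(S\times\Sigmasg)}^2 \le C_{tr}^2 \int_S \|\theta_\e(t)\|_{H^1(\Omega_\e)}^2\di{t} \le C_{tr}^2\left(T\,\|\theta_\e\|_{L^\infty(S;L^2(\Omega_\e))}^2 + \|\nabla\theta_\e\|_{L^2(S\times\Omega_\e)}^2\right),
\end{equation*}\end{linenomath*}
and the right-hand side is bounded uniformly in $\varepsilon$ by the solution estimate \eqref{eq:solution_esitmate}.

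Next I would deal with the grain-side traces $\thetaIdx{g}$ on $\Sigmafg$ and on $\Sigmasg$. Instead of proving a trace inequality from inside the thin layer $\OmegaIdx{g}$ (which would carry an unfavourable power of $\varepsilon$), I would use the jump identities $\jump{\theta_\e}=\thetaIdx{f}-\thetaIdx{g}$ on $\Sigmafg$ and $\jump{\theta_\e}=\thetaIdx{s}-\thetaIdx{g}$ on $\Sigmasg$. The triangle inequality then gives $\|\thetaIdx{g}\|_{L^2(S\times\Sigmafg)}\le\|\thetaIdx{f}\|_{L^2(S\times\Sigmafg)}+\|\jump{\theta_\e}\|_{L^2(S\times\Sigmafg)}$ and likewise on $\Sigmasg$; the first summands were just estimated, and the jump terms are again controlled uniformly in $\varepsilon$ by \eqref{eq:solution_esitmate}. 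Summing the four resulting bounds yields the claim.

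The argument itself is short: the real work has been front-loaded into \cref{lem:trace_estimate} (and its $\varepsilon$-uniformity), and the only genuine modelling choice is to route the grain traces through the jump estimate rather than through a direct trace bound on $\OmegaIdx{g}$. Accordingly, I do not expect a serious obstacle here beyond being careful that all constants invoked — $C_{tr}$ and the constant in \eqref{eq:solution_esitmate} — are indeed independent of $\varepsilon$ for $\varepsilon<\varepsilon_0$.
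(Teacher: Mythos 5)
Your proposal is correct and follows essentially the same route as the paper: the fluid/solid-side traces are controlled by the $\varepsilon$-uniform trace estimate of \cref{lem:trace_estimate} combined with the solution bound \eqref{eq:solution_esitmate}, and the grain-side traces are then obtained via the triangle inequality through the jump term, which is itself bounded by \eqref{eq:solution_esitmate}. No gaps.
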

\begin{proof}
    Again, we only present the arguments for the interface $\Sigmafg$; the estimate on $\Sigmasg$ follows in the same way. Since both $\Omega_\e$ and $\OmegaIdx{g}$ have Lipschitz boundaries, there exist linear bounded trace operators $\mathbb{T}_\varepsilon : H^1(\Omega_\e) \to L^2(\partial \Omega_\e)$ and $\mathbb{T}^g_\varepsilon : H^1(\OmegaIdx{g}) \to L^2(\partial \OmegaIdx{g})$. This implies, together with Eq  \eqref{eq:solution_esitmate},
    \begin{linenomath*}\begin{equation}\label{eq:trace_estimate}
        \|\mathbb{T}_\varepsilon\theta_\e\|_{L^2(S \times \Sigmafg)} \leq 
        C_{tr}\|\theta_\e\|_{L^2(S ;H^1(\Omega_\e))} \leq C
    \end{equation}\end{linenomath*}
    for almost all $t\in S$ and by \cref{lem:trace_estimate}, $C_{tr}$ can be chosen independently of $\varepsilon$. For the estimate of $\thetaIdx{g}$ on $\Sigmafg$, we use that
    \begin{linenomath*}\begin{align*}
        \|\mathbb{T}^g_\varepsilon\thetaIdx{g}\|_{L^2(S \times \Sigmafg)} &\leq \|\mathbb{T}^g_\varepsilon\thetaIdx{g}- \mathbb{T}_\varepsilon\theta_\e\|_{L^2(S \times \Sigmafg)} + \|\mathbb{T}_\varepsilon\theta_\e\|_{L^2(S \times \Sigmafg)} 
        = \|\jump{\theta_\varepsilon}\|_{L^2(S \times \Sigmafg)} + \|\mathbb{T}_\varepsilon\theta_\e\|_{L^2(S \times \Sigmafg)} 
    \end{align*}\end{linenomath*}
    Combining the two estimates (\ref{eq:solution_esitmate}) and (\ref{eq:trace_estimate}) gives that $\thetaIdx{g}$ is also bounded on $\Sigmafg$.
\end{proof}
\section{Two-scale limit and homogenization}\label{sec:homogenization}
For passing to the limit $\varepsilon \to 0$, we apply the concept of two-scale convergence \cite{Allaire92}.
In the domain $\OmegaIdx{g}$ we need to consider the generalized two-scale convergence for thin domains, which was first introduced in \cite[Definition 4.1]{Neuss07} and further developed in \cite{Bhattacharya2022, Gahn21, Gahn17}.
We state here the main definitions and results we need for the limiting procedure.
\newpage
\begin{definition}[Two-scale convergence on thin domains]\label{def:two_scale}\hspace{5pt}
\begin{itemize}
    \item[i)] A sequence $u_\varepsilon\in L^2(S\times\OmegaIdx{g})$ is said to weakly two-scale converge to a function $u \in L^2(S\times \Sigma \times \GrainCell)$ (notation $u_\varepsilon \twosc u$) if 
    \begin{linenomath*}\begin{equation}
        \lim_{\varepsilon\to 0} \frac{1}{\varepsilon}\int_S \int_{\OmegaIdx{g}} u_\varepsilon(t, x)\varphi\left(t, \Tilde{x}, \frac{x}{\varepsilon}\right) \di{x}\di{t} = \int_S \int_{\Sigma} \int_{\GrainCell} u(t,\Tilde{x},y)\varphi(t,\Tilde{x},y) \di{y}\di{\Tilde{x}}\di{t},
    \end{equation}\end{linenomath*}
    for all $\varphi \in C(\overline{S \times \Sigma}; C_{\Tilde{\#}}(\overline{\GrainCell}))$.
    \item[ii)] A sequence $u_\varepsilon\in L^2(S\times\partial \OmegaIdx{g})$ is said to weakly two-scale converge to a function $u \in L^2(S\times \Sigma \times \partial \GrainCell)$ if 
    \begin{linenomath*}\begin{equation}
        \lim_{\varepsilon\to 0} \int_S \int_{\partial \OmegaIdx{g}} u_\varepsilon(t, x)\varphi\left(t, \Tilde{x}, \frac{x}{\varepsilon}\right) \di{\sigma_x}\di{t} = \int_S \int_{\Sigma} \int_{\partial \GrainCell} u(t,\Tilde{x},y)\varphi(t,\Tilde{x},y) \di{\sigma_y}\di{\Tilde{x}}\di{t},
    \end{equation}\end{linenomath*}
    for all $\varphi \in C(\overline{S\times \Sigma};C_{\Tilde{\#}}(\partial \GrainCell))$.
\end{itemize}
\end{definition}
\begin{lemma}[Two-scale limits]\label{lem:ts_limits} \hspace{5pt}
    \begin{itemize}
        \item[i)] Let $u_\varepsilon \in L^2(S; H^1(\OmegaIdx{g}))$ with
        \begin{linenomath*}\begin{equation*}
            \frac{1}{\sqrt{\varepsilon}}\|u_\varepsilon\|_{L^2(S\times \OmegaIdx{g})} 
            + \sqrt{\varepsilon} \|\nabla u_\varepsilon\|_{L^2(S\times \OmegaIdx{g})} \leq C.
        \end{equation*}\end{linenomath*}
        Then, there exists a function $u \in L^2(S\times \Sigma; H^1_{\Tilde{\#}}(\GrainCell))$ and a subsequence of $u_\varepsilon$, still denoted with $u_\varepsilon$, such that
        \begin{linenomath*}\begin{alignat*}{1}
            u_\varepsilon &\twosc u, \\
            \varepsilon \nabla u_\varepsilon &\twosc \nabla_y u.
        \end{alignat*}\end{linenomath*}
        \item[ii)] Let $\OmegaIdx{g}$ be connected and $u_\varepsilon \in L^2(S; H^1(\OmegaIdx{g}))$ with
        \begin{linenomath*}\begin{equation*}
            \frac{1}{\sqrt{\varepsilon}}\|u_\varepsilon\|_{L^2(S\times \OmegaIdx{g})} 
            + \frac{1}{\sqrt{\varepsilon}}\|\nabla u_\varepsilon\|_{L^2(S\times \OmegaIdx{g})} \leq C.
        \end{equation*}\end{linenomath*}
        Then, there exist functions $u \in L^2(S; H^1(\Sigma))$ and $u_1\in L^2(S\times \Sigma; H^1_{\Tilde{\#}}(\GrainCell)/\R)$ such that, up to a subsequence of $u_\varepsilon$, one has
        \begin{linenomath*}\begin{alignat*}{1}
            u_\varepsilon &\twosc u, \\
            \nabla u_\varepsilon &\twosc \nabla_{\Tilde{x}}u + \nabla_y u_1.
        \end{alignat*}\end{linenomath*}
        \item[iii)] Let $u_\varepsilon \in L^2(S \times \partial\OmegaIdx{g})$ such that $\|u_\varepsilon\|_{L^2(S\times \partial\OmegaIdx{g})} 
        \leq C$. Then, there exist $u \in L^2(S\times \Sigma \times \partial \GrainCell)$, such that $u_\varepsilon \twosc u$, up to a subsequence.
        Here, $u$ is extended periodically with respect to $\Tilde{y}$.
    \end{itemize}
\end{lemma}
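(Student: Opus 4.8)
The plan is to prove the three two-scale compactness statements of \cref{lem:ts_limits} by reducing them, via a rescaling of the thin layer, to the classical two-scale convergence results of Allaire and the thin-layer theory of \cite{Neuss07}. The main device is the "unfolding/rescaling" map that stretches $\OmegaIdx{g}$ in the $x_d$-direction by a factor $1/\varepsilon$: a point $x=(\tilde x, x_d)\in\OmegaIdx{g}$ is mapped to $(\tilde x, x_d/\varepsilon)$, which, combined with the periodic $(d-1)$-tiling, identifies (up to the periodic covering) $\OmegaIdx{g}$ with a neighborhood of $\Sigma\times\GrainCell$. Under this map the factor $\frac1\varepsilon$ in \cref{def:two_scale}~i) is exactly the Jacobian, so the boundedness hypotheses $\frac{1}{\sqrt\varepsilon}\|u_\varepsilon\| + \sqrt\varepsilon\|\nabla u_\varepsilon\| \le C$ (resp.\ with $\frac1{\sqrt\varepsilon}\|\nabla u_\varepsilon\|$) translate into uniform $L^2$-bounds on the rescaled functions and their rescaled gradients over a fixed domain. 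All three items then follow from the corresponding assertions in \cite[Lemma~4.2 and Section~4]{Neuss07} (see also \cite{Gahn17, Gahn21}); the only thing to check carefully is that our geometry satisfies the structural hypotheses of that reference, namely that $\GrainCell$ is a connected Lipschitz domain (which we assumed) and, for item~ii), that $\GrainCell$ together with its periodic copies forms a connected set spanning the tangential directions — this is exactly Case~(b) and the condition $\{y\in\partial Z: y_i=1\}=\{y\in\partial Z: y_i=0\}+e_i$.

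Concretely, for item~i) I would first observe that after rescaling, $\varepsilon\nabla u_\varepsilon$ becomes a bounded sequence in $L^2$ (the tangential components pick up an $\varepsilon$, the normal component is $\partial_{x_d}$ which becomes $\frac1\varepsilon\partial_{y_d}$, and $\varepsilon\cdot\frac1\varepsilon=1$, so the rescaled $\varepsilon\nabla u_\varepsilon$ is bounded in all components). By the standard two-scale compactness theorem there is a subsequence and a limit $u\in L^2(S\times\Sigma\times\GrainCell)$ with $u_\varepsilon\twosc u$ and $\varepsilon\nabla u_\varepsilon\twosc \xi$ for some $\xi\in L^2(S\times\Sigma\times\GrainCell)^d$; the usual integration-by-parts argument against test functions of the form $\varphi(t,\tilde x,x/\varepsilon)$ with $\varphi$ periodic in $\tilde y$ shows $\xi=\nabla_y u$ with $u(t,\tilde x,\cdot)\in H^1_{\tilde\#}(\GrainCell)$ for a.e.\ $(t,\tilde x)$, hence $u\in L^2(S\times\Sigma; H^1_{\tilde\#}(\GrainCell))$. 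For item~ii), the stronger bound $\frac1{\sqrt\varepsilon}\|\nabla u_\varepsilon\|\le C$ means the rescaled full gradient (not just $\varepsilon$ times it) is bounded, which is the hypothesis of the thin-layer analogue of the classical result that produces a macroscopic $H^1$-limit plus a corrector: one gets $u\in L^2(S;H^1(\Sigma))$ and $u_1\in L^2(S\times\Sigma; H^1_{\tilde\#}(\GrainCell)/\R)$ with $\nabla u_\varepsilon\twosc\nabla_{\tilde x}u+\nabla_y u_1$. The connectedness of the periodically-continued $\GrainCell$ is what guarantees the tangential part of the limit is $\tilde x$-dependent only (no $y$-dependence of the leading term) and that it lies in $H^1(\Sigma)$; this is the place where a Poincaré–Wirtinger inequality on the periodicity cell, uniform in the tiling, is invoked. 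Item~iii) is the boundary version: using the trace/boundary two-scale compactness of \cite[Lemma~4.3]{Neuss07} (an analogue of \cite{Allaire95, Neuss96}) applied after rescaling, a bounded sequence in $L^2(S\times\partial\OmegaIdx{g})$ has a two-scale convergent subsequence with limit in $L^2(S\times\Sigma\times\partial\GrainCell)$, the periodic extension in $\tilde y$ being inherited from the periodic structure of $\partial\OmegaIdx{g}$.

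I expect the main obstacle to be purely bookkeeping: making the rescaling map and the periodic covering precise near the lateral boundary of $\OmegaIdx{g}\cap\Omega$ (where $\tilde x$ ranges over $\tilde\Omega$, which is tiled by $\varepsilon Y^{d-1}$-cells but cells may be cut by $\partial\tilde\Omega$), and confirming that the test-function class $C(\overline{S\times\Sigma};C_{\tilde\#}(\overline\GrainCell))$ in \cref{def:two_scale} is dense enough and matches the class used in \cite{Neuss07} so that the cited compactness statements apply verbatim. A secondary point, relevant only to item~ii), is the uniform Poincaré inequality on $\GrainCell$ modulo constants, which requires the connectedness hypothesis of Case~(b); once that is granted, the argument is identical to the standard one. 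Since none of this is genuinely new — it is the thin-layer transcription of Allaire's theorems, already carried out in \cite{Neuss07, Gahn17, Gahn21} for essentially this geometry — the cleanest exposition is to state the rescaling, verify the hypotheses, and cite those references for the remaining details.
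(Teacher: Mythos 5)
Your proposal is correct and takes essentially the same route as the paper: the paper's proof consists entirely of citations to the established thin-layer two-scale compactness results (\cite[Theorem 4.4]{Bhattacharya2022} for i) and iii), \cite[Theorem 3.3]{Gahn17} for ii), building on \cite{Neuss07}), which is exactly the literature your rescaling argument reconstructs. Your additional sketch of the rescaling, the integration-by-parts identification of $\nabla_y u$, and the role of the Case (b) connectedness/Poincaré argument is a faithful account of how those cited results work, so there is no gap.
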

\begin{proof}
    The statement $(i)$ is found in \cite[Theorem 4.4 (i)]{Bhattacharya2022} and for $(ii)$ we refer to  \cite[Theorem 3.3]{Gahn17}.
    For the disconnected geometry, $(iii)$ can be found in \cite[Theorem 4.4 (ii)]{Bhattacharya2022} as an extension of earlier results from \cite[Proposition 4.2]{Neuss07}.
    This result transfers to the connected geometry noting that $|\partial\Omega_\e^g|=\mathcal{O}(1)$ in both cases via \cite[Lemma 4.3]{Bhattacharya2022}.
\end{proof}
With the above properties of two-scale convergence and the technical results from \cref{lem:extension_operators} and \ref{lem:trace_estimate}, we can now determine the effective model.
For a function $\vartheta\in\mathcal{W}_\e$, the gradient is only defined on $\Omega_\e$ and $\Omega_\e^g$ with a possible jump across their interface.
To avoid overflowing notation, we will use $\nabla\vartheta^g\in L^2(S\times\Omega_\e^g)^d$ to denote $\nabla \vartheta_{|\Omega_\e^g}$.
Moreover, we use $\widetilde{\nabla\vartheta}\in L^2(S\times\Omega)^d$ to denote the function $\nabla\vartheta\in L^2(S\times\Omega_\e)$ extended by zero to the whole of $\Omega$.
\subsection{Homogenization of Case (a)}
Based on the estimates of \cref{thm:existence} for the solution $\theta_\varepsilon$ we are able to identify the following limit behavior for $\varepsilon \to 0$.
\begin{lemma}\label{lem:limits_case_a}
    There are limit functions $\theta \in L^2(S;H^1(\Omega))$ and $\theta^g \in L^2(S\times \Sigma; H^1(\GrainCell))$ such that 
    \begin{linenomath*}\begin{alignat*}{3}
        i&)\ \chi_{\Omega_\e} \theta_\varepsilon \to \theta
         \ \ \text{in}\ L^2(S\times\Omega),\quad 
        &&ii&&)\  \widetilde{\nabla \theta_\varepsilon} \rightharpoonup\nabla \theta  \ \ \text{in}\ L^2(S\times\Omega)^d,
        \\
        iii&)\  \theta^g_\varepsilon\twosc \theta^g, 
        &&iv&&)\ \varepsilon \nabla \theta^g_\varepsilon
        \twosc \nabla_y \theta^g 
    \end{alignat*}\end{linenomath*}
    at least up to a subsequence. 
    Additionally, it holds 
    %
    \begin{linenomath*}\begin{equation}\label{eq:robin_convergence_a}
        \lim_{\varepsilon \to 0}\int_S \int_{\Sigma^{k}_\varepsilon} \jump{\theta_\e}\varphi\left(t, \Tilde{x}, \frac{x}{\varepsilon}\right) \di{\sigma_x} \di{t}
        =  \int_S \int_{\Sigma} \int_{\Gamma^{k}}(\theta - \theta^g) \varphi(t, \tilde{x}, y) \di{y} \di{\Tilde{x}} \di{t}
    \end{equation}\end{linenomath*}
    for $k=f,s$ and all admissible test functions $\varphi$.
\end{lemma}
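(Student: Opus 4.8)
The plan is to establish the four convergence statements $i)$--$iv)$ first, then derive the interface identity \eqref{eq:robin_convergence_a} as a consequence. For $i)$ and $ii)$, the key input is the $\varepsilon$-independent bound \eqref{eq:solution_esitmate} from \cref{thm:existence}. Applying the uniform extension operator $\mathbb{E}_\e$ from \cref{lem:extension_operators} to $\theta_\e|_{\Omega_\e}$, we obtain $\mathbb{E}_\e\theta_\e$ bounded in $L^2(S;H^1(\Omega))$ independently of $\varepsilon$, and by the time-derivative bound \eqref{eq:time_derivative_estimate} also $\partial_t\mathbb{E}_\e\theta_\e$ is controlled (using that $\mathbb{E}_\e$ commutes suitably with $\partial_t$, since it is linear and $t$-independent). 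An Aubin--Lions argument then yields a subsequence with $\mathbb{E}_\e\theta_\e \to \theta$ strongly in $L^2(S\times\Omega)$ and $\nabla\mathbb{E}_\e\theta_\e\rightharpoonup\nabla\theta$ weakly in $L^2(S\times\Omega)^d$ for some $\theta\in L^2(S;H^1(\Omega))$. Since $|\Omega\setminus\Omega_\e|=|\OmegaIdx{g}|\in\mathcal{O}(\varepsilon)\to 0$, one has $\chi_{\Omega_\e}\mathbb{E}_\e\theta_\e - \chi_{\Omega_\e}\theta_\e = 0$ and $\|\chi_{\Omega_\e}\theta_\e - \mathbb{E}_\e\theta_\e\|_{L^2}\to 0$, giving $i)$; likewise $\widetilde{\nabla\theta_\e} - \chi_{\Omega_\e}\nabla\mathbb{E}_\e\theta_\e \to 0$ strongly, so the weak limit of $\widetilde{\nabla\theta_\e}$ coincides with $\nabla\theta$, giving $ii)$. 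For $iii)$ and $iv)$, the bounds $\varepsilon^{-1/2}\|\theta^g_\e\|_{L^2(S\times\OmegaIdx{g})} + \varepsilon^{1/2}\|\nabla\theta^g_\e\|_{L^2(S\times\OmegaIdx{g})}\le C$ (this is exactly \eqref{eq:solution_esitmate} with $\gamma=1/2$ in Case (a)) put us precisely in the hypotheses of \cref{lem:ts_limits}$(i)$, which directly yields $\theta^g\in L^2(S\times\Sigma;H^1_{\Tilde{\#}}(\GrainCell))$ with $\theta^g_\e\twosc\theta^g$ and $\varepsilon\nabla\theta^g_\e\twosc\nabla_y\theta^g$.

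For the interface identity \eqref{eq:robin_convergence_a}, I would split the jump: on $\Sigma^k_\e$ ($k=f,s$) we have $\jump{\theta_\e} = \mathbb{T}_\e\theta_\e - \mathbb{T}^g_\e\theta^g_\e$ in the appropriate orientation, where $\mathbb{T}_\e,\mathbb{T}^g_\e$ are the traces onto $\Sigma^k_\e$ from $\Omega_\e$ and $\OmegaIdx{g}$ respectively. The strategy is to pass to the limit in the two terms separately. For the $\OmegaIdx{g}$-side trace $\mathbb{T}^g_\e\theta^g_\e$, \cref{lem:interface_estimate} gives $\|\mathbb{T}^g_\e\theta^g_\e\|_{L^2(S\times\Sigma^k_\e)}\le C$, so \cref{lem:ts_limits}$(iii)$ provides a boundary two-scale limit; one must check this limit is the trace of $\theta^g$ on $\Sigma\times\Gamma^k$, which follows by combining the interior two-scale convergence $iii)$, the gradient bound from $iv)$, and the continuity of the trace operator $H^1_{\Tilde{\#}}(\GrainCell)\to L^2(\Gamma^k)$ applied cellwise — this is the standard argument that boundary two-scale limits of traces of $H^1$-functions are traces of the interior two-scale limit. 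For the $\Omega_\e$-side trace $\mathbb{T}_\e\theta_\e = \mathbb{T}_\e\mathbb{E}_\e\theta_\e$, since $\Sigma^k_\e$ collapses onto the flat interface $\Sigma$ as $\varepsilon\to 0$ and $\mathbb{E}_\e\theta_\e\to\theta$ strongly in $L^2(S;H^1(\Omega))$ with $\theta\in L^2(S;H^1(\Omega))$ continuous across $\Sigma$, the relevant oscillating test-function pairing converges to $\int_S\int_\Sigma\int_{\Gamma^k}\theta\,\varphi\,\mathrm{d}y\,\mathrm{d}\tilde x\,\mathrm{d}t$; this uses a trace-convergence lemma for thin layers (in the spirit of the estimate in the proof of \cref{lem:trace_estimate}, integrating $\nabla\mathbb{E}_\e\theta_\e\cdot e_d$ over the $\mathcal{O}(\varepsilon)$-thin layer between $\Sigma$ and $\Sigma^k_\e$ to show the trace on $\Sigma^k_\e$ differs from the trace on $\Sigma$ by a quantity vanishing after the pairing). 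Subtracting the two limits and using $|\Gamma^k|$-weighted integration gives \eqref{eq:robin_convergence_a}.

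The main obstacle is the rigorous treatment of the $\Omega_\e$-side trace term $\mathbb{T}_\e\theta_\e$ on the oscillating graph $\Sigma^k_\e$ against a test function of the form $\varphi(t,\tilde x, x/\varepsilon)$: one needs that, even though $\Sigma^k_\e$ oscillates at scale $\varepsilon$ in the $x_d$-direction, the trace of the (strongly convergent, bulk-regular) extension $\mathbb{E}_\e\theta_\e$ on $\Sigma^k_\e$ two-scale converges to the $y$-independent limit $\theta|_\Sigma$. The cleanest route is: write $\mathbb{T}_\e\mathbb{E}_\e\theta_\e(\tilde x, \varepsilon\gamma_i(\tilde x/\varepsilon)) = \mathbb{E}_\e\theta_\e(\tilde x, 0) + \int_0^{\varepsilon\gamma_i(\tilde x/\varepsilon)}\partial_{x_d}\mathbb{E}_\e\theta_\e(\tilde x, s)\,\mathrm{d}s$ as in the proof of \cref{lem:trace_estimate}, observe that the integral remainder is $\mathcal{O}(\sqrt\varepsilon)$ in $L^2(\Sigma)$ (hence negligible against the bounded oscillating test function after integration), and that $\mathbb{E}_\e\theta_\e(\cdot,0)\to\theta(\cdot,0)=\theta|_\Sigma$ strongly in $L^2(S\times\Sigma)$ by continuity of the flat trace and strong $H^1$-convergence; strong convergence of the profile times weak-$*$ (two-scale) convergence of the oscillating test function then closes the argument, and the factor $|\Gamma^k|$ emerges because $\theta|_\Sigma$ is $y$-independent. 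The analogous but easier remark is that the $x/\varepsilon$-oscillation of $\varphi$ restricted to the graph becomes, after the coordinate change, an oscillation over $\omega_i\subset Y^{d-1}$ at the grain boundary $\Gamma^k$, which is exactly what the right-hand side of \eqref{eq:robin_convergence_a} integrates.
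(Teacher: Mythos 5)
Your proposal is correct and follows essentially the same route as the paper's proof: extension operator plus Aubin--Lions for $i)$--$ii)$, \cref{lem:ts_limits}$(i)$ for $iii)$--$iv)$, the decomposition of the trace on the oscillating graph $\Sigma^k_\e$ into the flat trace at $x_d=0$ plus an $\mathcal{O}(\sqrt{\varepsilon})$ thin-layer integral remainder, and identification of the boundary two-scale limit of $\mathbb{T}^g_\e\theta^g_\e$ with the trace of $\theta^g$ (which the paper carries out explicitly via the integration-by-parts argument you refer to as standard). No gaps.
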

\begin{proof}
    For the convergence of $\theta_\e$ in $\Omega_\e$, we utilize the extension operator from \cref{lem:extension_operators} and the estimates of \cref{thm:existence}.
    Since the function $\theta_\e$ is bounded in $W_\e$, the extension $\mathbb{E}_\e \left({\theta_\e}_{|\Omega_\e}\right)$ is also bounded and we obtain a weakly convergent subsequence in $L^2(S; H^1(\Omega))$.
    Note that, also, the time derivative of the extension exists and is bounded  \cite[Chapter 5]{hopker2016}.
    Since the embedding $H^1(\Omega) \hookrightarrow L^2(\Omega)$ is compact, we can apply Aubin-Lions Lemma \cite[Corollary 4]{Simon1986} to obtain strong convergence of $\mathbb{E}_\e \left({\theta_\e}_{|\Omega_\e}\right)$ in $L^2(S \times \Omega)$ for a subsequence.
    Since additionally $\chi_{\Omega_\e} \to 1$ in $L^2(\Omega)$, we obtain $i)$ and $ii)$.
    The points $iii)$ and $iv)$ follow from the estimate in \cref{thm:existence} and the two-scale convergence in \cref{lem:ts_limits} $i)$.

    For (\ref{eq:robin_convergence_a}) we demonstrate the arguments only for $k=f$, the solid part follows analogously. We start with the convergence of $\theta_\e$. Here, we utilize the extension operator $\mathbb{E}_\e$ and the assumption that the interface can be represented by multiple height functions. Therefore we obtain (if all the limits exist)
    \begin{align*}
        \lim_{\e \to 0} \int_{\Sigmafg} \theta_\e(x)\varphi\left(\Tilde{x}, \nicefrac{x}{\e}\right) \di{\sigma_x} 
        &= \lim_{\e \to 0} \sum_{i \in I} \int_{\omega_{i,\e}} \theta_\e\left(\Tilde{x}, \e \gamma_i\left(\nicefrac{\Tilde{x}}{\e}\right)\right) \varphi\left(\Tilde{x}, \nicefrac{\Tilde{x}}{\e}, \gamma_i\left(\nicefrac{\Tilde{x}}{\e}\right)\right)  \sqrt{1 + |\nabla_{\tilde{y}} \gamma_i(\Tilde{y})|^2_{\Tilde{y}=\frac{\title{x}}{\varepsilon}}}\di{\Tilde{x}}
        \\
        &= \underbrace{\lim_{\e \to 0} \sum_{i \in I} \int_{\omega_{i,\e}} \left[\theta_\e\left(\Tilde{x}, \e \gamma_i\left(\nicefrac{\Tilde{x}}{\e}\right)\right)
        - (\mathbb{E}_\e \theta_\e)(\tilde{x}, 0)\right] \varphi\left(\Tilde{x}, \nicefrac{\Tilde{x}}{\e}, \gamma_i\left(\nicefrac{\Tilde{x}}{\e}\right)\right)  \sqrt{1 + |\nabla_{\tilde{y}} \gamma_i(\Tilde{y})|^2_{\Tilde{y}=\frac{\title{x}}{\varepsilon}}}\di{\Tilde{x}}}_{(1)}
        \\
        &+ \underbrace{\lim_{\e \to 0} \sum_{i \in I} \int_{\omega_{i,\e}} \left[(\mathbb{E}_\e \theta_\e)(\tilde{x}, 0) - \theta(\tilde{x}, 0)\right] \varphi\left(\Tilde{x}, \nicefrac{\Tilde{x}}{\e}, \gamma_i\left(\nicefrac{\Tilde{x}}{\e}\right)\right)  \sqrt{1 + |\nabla_{\tilde{y}} \gamma_i(\Tilde{y})|^2_{\Tilde{y}=\frac{\title{x}}{\varepsilon}}}\di{\Tilde{x}}}_{(2)}
        \\
        &+ \underbrace{\lim_{\e \to 0} \sum_{i \in I} \int_{\omega_{i,\e}}  \theta(\tilde{x}, 0) \varphi\left(\Tilde{x}, \nicefrac{\Tilde{x}}{\e}, \gamma_i\left(\nicefrac{\Tilde{x}}{\e}\right)\right)  \sqrt{1 + |\nabla_{\tilde{y}} \gamma_i(\Tilde{y})|^2_{\Tilde{y}=\frac{\title{x}}{\varepsilon}}}\di{\Tilde{x}}}_{(3)}.
    \end{align*}
    Checking each of the terms individually we see:
    \begin{itemize}
        \item[(1)] On $\Sigmafg$ it holds $\theta_\e = \mathbb{E}_\e^f\thetaIdx{f}$ so obtain with the results of \cite[Lemma 3.2]{Donato10}
            \begin{align*}
                \left|(1)\right|
                &\leq C \sum_{i=1}^n \lim_{\e \to 0} \|\mathbb{E}_\e\theta_\e(\Tilde{x}, \e \gamma_i(\Tilde{x}/\e)) - \mathbb{E}_\e\theta_\e(\Tilde{x}, 0) \|_{L^2(\omega_{i, \e})}
                \underbrace{\|\varphi\left(\Tilde{x}, \tfrac{\Tilde{x}}{\e}, \gamma_i(\Tilde{x}/\e)\right)\|_{L^2(\omega_{i, \e})}}_{\leq C}
                \\
                &\leq C \sum_{i=1}^n \lim_{\e \to 0} \sqrt{\e} \|\mathbb{E}_\e \theta_\e \|_{H^1(\Omega)} \quad = 0.
            \end{align*}      
        \item[(2)] We can apply the same procedure as for (1) and then obtain the terms:
        \begin{equation*}
            \|\mathbb{E}_\e\theta_\e(\Tilde{x}, 0)  - \theta(\Tilde{x}, 0)\|_{L^2(\omega_{i, \e})} \leq  \|\mathbb{E}_\e\theta_\e(\Tilde{x}, 0)  - \theta(\Tilde{x}, 0)\|_{L^2(\Sigma)} \to 0, \quad \text{for } \e \to 0,
        \end{equation*}
        by the compactness of the trace operator in the bounded Lipschitz domain $\Omega^f$, which is independent of $\e$.
        \item[(3)] Since $\theta$ does not depend on $\e$ we obtain the desired limit 
        \begin{equation*}
            \lim_{\e \to 0} \int_{\Sigmafg} \theta(\Tilde{x}, 0) \varphi\left(\Tilde{x}, \tfrac{x}{\e}\right) \di{\sigma_x} = \int_{\Sigma} \int_{\Gammafg} \theta(\Tilde{x}, 0) \varphi\left(\Tilde{x}, y\right) \di{y} \di{\Tilde{x}}.
        \end{equation*}
    \end{itemize}
    
    The convergence of the grain temperatures on the interfaces $\Sigmafg$ and $\Sigmasg$ can be handled by a standard argument, see \cite[Section 5.3]{Neuss07} for a comparable setup. Nevertheless, we also quickly demonstrate the arguments for the present case for a better understanding. Using \cref{lem:interface_estimate} and the convergence  \cref{lem:ts_limits} $iii)$, one obtains that there exists a $u \in L^2(S\times \Sigma \times \partial \GrainCell)$ such that $\mathbb{T}^g_\e\theta^{g}_{\varepsilon} \twosc u$. By testing with any $\varphi \in C^1(\overline{\Sigma \times \GrainCell})^d $ that is periodic in $\Tilde{y}$ with period 1, setting $\varphi_\e = \varphi(\Tilde{x}, \frac{x}{\e})$ and using the following integration by parts
    \begin{linenomath*}\begin{align*}
        \int_{\Sigma}\int_{\GrainCell} \nabla_y \theta^g\cdot\varphi \di{y}\di{\Tilde{x}} &=
        \lim_{\varepsilon \to 0} \int_{\OmegaIdx{g}} \nabla \thetaIdx{g}\cdot \varphi_\e \di{x} 
        \\
        &= \lim_{\varepsilon \to 0} \left( - \int_{\OmegaIdx{g}} \thetaIdx{g} \dive_x \varphi_\e \di{x} - \frac{1}{\varepsilon}\int_{\OmegaIdx{g}} \thetaIdx{g} \dive_y \varphi_\e \di{x} + \int_{\partial \OmegaIdx{g}} \mathbb{T}^g_\e\theta^{g}_{\varepsilon} \varphi_\e \di{\sigma_x} \right) 
        \\
        &= - \int_{\Sigma}\int_{\GrainCell} \theta^g \dive_y \varphi \di{y}\di{\Tilde{x}} + \int_{\Sigma}\int_{\partial \GrainCell} u \varphi \di{\sigma_y}\di{\Tilde{x}}
        \\
        &= \int_{\Sigma}\int_{\GrainCell} \nabla_y \theta^g\cdot \varphi \di{y}\di{\Tilde{x}} - 
        \int_{\Sigma}\int_{\partial \GrainCell} \theta^g \varphi \di{\sigma_y}\di{\Tilde{x}}
        + \int_{\Sigma}\int_{\partial \GrainCell} u \varphi \di{\sigma_y}\di{\Tilde{x}}, 
    \end{align*}\end{linenomath*}
    we obtain that on $\partial Z$ it holds $\theta^g = u$ for almost all $t \in S$.
\end{proof}
With the previous Lemma, we can now pass to the limit in the weak formulation (\ref{eq:weak_formulation}). For the present case, the limiting procedure is standard and we only list the general steps one has to follow. For a detailed consideration of $\e \to 0$, we refer to \cite[Section 4.1]{Eden2022effective} with a comparable setup. The general procedure when determining the effective model consists of three steps:
\begin{enumerate}
    \item Construct smooth test functions compatible with the concept of two-scale convergence. Use these functions in the weak formulation to pass to the limit $\e \to 0$. In the present situation, fitting test functions would be $\varphi \in C^\infty(\overline{S \times \Omega})$ and $\varphi^g \in C^\infty(\overline{S\times \Sigma}; C^\infty_{\Tilde{\#}}(\overline{\GrainCell}))$.
    \item Trying to simplify the effective equations by decoupling the equations and isolating cell problems.
    \item Using a density argument to show that the limit also holds for test functions from more general spaces, here for example for functions $\varphi \in L^2(S; H^1(\Omega))$. 
\end{enumerate}
Carrying out the above procedure, under the Assumptions \ref{item:A5}--\ref{item:A7}, and collecting all the limits, we obtain that the effective function fulfills $(\theta(0, \cdot), \theta^g(0, \cdot)) = (\theta_0, \theta^g_0)$ and
\begin{linenomath*}\begin{equation}\label{eq:weak_formulation_case_A}
    \begin{split}
        & (\rho c \partial_t \theta, \varphi)_{\Omega} + (\kappa\nabla \theta, \nabla\varphi)_{\Omega}  + (\rho c v \nabla \theta , \varphi)_{\Omega^f} + 
        (\rho c \partial_t \theta, \varphi)_{\Sigma \times \GrainCell} + (\kappa \nabla_y \theta, \nabla_y\varphi)_{\Sigma \times \GrainCell}
        \\
        &+(\alpha (\theta^f - \theta^g), (\varphi^f - \varphi^g))_{\Sigma \times \Gammafg} + (\alpha (\theta^s - \theta^g), (\varphi^s - \varphi^g))_{\Sigma \times \Gammasg} 
        = (f, \varphi)_{\Omega} + (f, \varphi)_{\Sigma \times \GrainCell},
    \end{split}
\end{equation}\end{linenomath*}
for all $\varphi = (\varphi^f, \varphi^s, \varphi^g) \in L^2(S; H^1(\Omega^f)) \times L^2(S; H^1(\Omega^s)) \times L^2(S \times \Sigma; H^1_{\Tilde{\#}}(\GrainCell))$ with $\varphi^f_{|\Sigma} = \varphi^s_{|\Sigma}$. The homogenization result, together with the strong formulation of the effective problem, is summarized in the following \cref{thm:hom_case_a}. Additionally, we show that the solution of Eq  \eqref{eq:weak_formulation_case_A} is unique, and therefore the complete sequence $\theta_\e$ converges, in $L^2$ and two-scale sense, to the homogenized solution.
\begin{theorem}[Homogenization in the disconnected domain (Case (a))]\label{thm:hom_case_a}
Let the Assumptions \ref{item:A1}--\ref{item:A7} be satisfied in their (a)-variants. Then, $\theta_\varepsilon \to \theta$ in $L^2(S\times \Omega)$, and $\theta^g_\varepsilon \twosc \theta^g$ in $L^2(S\times\Sigma\times \GrainCell)$ for $\varepsilon \to 0$, where
\begin{linenomath*}\begin{equation*}
    \theta \in L^2(S;H^1(\Omega))\ \text{ and } \ \theta^g \in L^2(S\times\Sigma; H^1(\GrainCell))
\end{equation*}\end{linenomath*}
such that 
\begin{linenomath*}\begin{equation*}
    (\partial_t \theta, \partial_t \theta^g) \in L^2(S \times \Omega) \times L^2(S \times \Sigma \times \GrainCell).
\end{equation*}\end{linenomath*}
\begin{subequations}\label{eq:hom_system_a}
    The limit $(\theta, \theta^g)$ is characterized as the unique weak solution of
    \begin{linenomath*}\begin{alignat}{2}
         \rho c \partial_t \theta - \dive{(\kappa \nabla \theta - \rho c v\theta)} &= f \quad &&\text{ in } S \times \Omega^f
         \\
         \rho c \partial_t \theta - \dive{(\kappa \nabla \theta)} &= f \quad &&\text{ in } S \times \Omega^s,
         \\
         \jump{\kappa \nabla \theta} \cdot n &= \sum_{k=f,s} \alpha^k \int_{\Gamma^k} \theta-\theta^g \di{\sigma_y}\quad &&\text{ on } S \times \Sigma, \label{eq:5d}
    \end{alignat}\end{linenomath*}
    with the macroscopic outer boundary and initial conditions
    \begin{linenomath*}\begin{alignat}{2}
        \theta &= \theta_0 \quad &&\text{ in } \{0\} \times \Omega, 
         \\
        \kappa \nabla \theta \cdot \nu &= 0 &&\text{ on } S \times \partial \Omega.
    \end{alignat}\end{linenomath*}
    Additionally, the system is coupled with cell problems on the interface $\Sigma$
    \begin{linenomath*}\begin{alignat}{2}
         \rho c \partial_t \theta^g - \dive_y{(\kappa \nabla_y \theta^g)} &= f^g \quad &&\text{ in } S \times \Sigma \times \GrainCell, 
         \\
         \kappa^g \nabla_{y} \theta^g \cdot n_\Gamma &= \alphafg (\theta - \theta^g) \quad &&\text{ on } S \times \Sigma \times \Gammafg, 
         \\
         \kappa^g \nabla_{y} \theta^g \cdot n_\Gamma &= \alphasg (\theta - \theta^g) &&\text{ on } S \times \Sigma \times \Gammasg, 
         \\
         \theta^g &= \theta^g_0 &&\text{ in } \{0\} \times \Sigma \times \GrainCell.
    \end{alignat}\end{linenomath*}
\end{subequations}
\end{theorem}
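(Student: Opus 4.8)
The plan is to establish the theorem in three stages: (i) derivation of the weak formulation \eqref{eq:weak_formulation_case_A} of the effective problem as the limit of \eqref{eq:weak_formulation}, (ii) identification of the strong form \eqref{eq:hom_system_a} from the weak form, and (iii) a uniqueness argument for the limit problem that upgrades the subsequential convergence of \cref{lem:limits_case_a} to convergence of the whole sequence.

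For stage (i), I would start from the weak formulation \eqref{eq:weak_formulation} tested against oscillating test functions of the form $\varphi_\e(t,x) = \varphi(t,x) + \e\,\varphi^g(t,\tilde x, x/\e)$ with $\varphi \in C^\infty(\overline{S\times\Omega})$ and $\varphi^g \in C^\infty(\overline{S\times\Sigma};C^\infty_{\tilde\#}(\overline{\GrainCell}))$, noting that on $\Omega_\e$ the second term is $O(\e)$ while on $\OmegaIdx{g}$ it contributes at the right order after multiplication by $1/\e$. Each term is then passed to the limit: the bulk time-derivative, diffusion, and convection terms converge by \cref{lem:limits_case_a} $i)$ and $ii)$ together with Assumptions \ref{item:A6}, \ref{item:A7} (the convection term uses the strong $L^2$ convergence $\hat v_\e \to v$ paired with the weak convergence of $\nabla\theta_\e$, or equivalently the divergence-free reformulation); the scaled grain terms $\frac1\e(\rho c\partial_t\theta_\e,\varphi)_{\OmegaIdx{g}}$ and $(\e\kappa\nabla\theta_\e,\nabla\varphi_\e)_{\OmegaIdx{g}}$ converge via the two-scale limits $iii)$, $iv)$ and \ref{item:A5}(a), \ref{item:A6}(a) (here the factor $\e$ in front of $\nabla\varphi^g(\tilde x,x/\e)$ combines with $\e\nabla\theta^g_\e \twosc \nabla_y\theta^g$ to produce $(\kappa\nabla_y\theta^g,\nabla_y\varphi^g)_{\Sigma\times\GrainCell}$); and the Robin interface terms on $\Sigmafg, \Sigmasg$ converge by the identity \eqref{eq:robin_convergence_a} applied with $\varphi = \varphi^f - \varphi^g$ respectively $\varphi^s - \varphi^g$ — this is the step that genuinely uses the graph representation of the interface and the trace machinery of \cref{lem:trace_estimate,lem:interface_estimate}. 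Collecting these limits and then invoking a density argument to relax the test-function class to $L^2(S;H^1(\Omega^f))\times L^2(S;H^1(\Omega^s))\times L^2(S\times\Sigma;H^1_{\tilde\#}(\GrainCell))$ with the matching condition $\varphi^f|_\Sigma = \varphi^s|_\Sigma$ yields \eqref{eq:weak_formulation_case_A}; the constraint $\theta^f|_\Sigma = \theta^s|_\Sigma$ on the limit itself comes from $\theta \in L^2(S;H^1(\Omega))$, i.e. from the fact that the extended bulk temperatures have no jump across $\Sigma$ in the limit, which is part of \cref{lem:limits_case_a} $i)$–$ii)$.

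For stage (ii), I would first choose $\varphi^g = 0$ and $\varphi$ supported away from $\Sigma$ inside $\Omega^f$ or $\Omega^s$ to recover the two bulk heat equations; then test with $\varphi^g$ supported in $S\times\Sigma\times\GrainCell$ (and $\varphi = 0$) and integrate by parts in $y$ to obtain the cell equation $\rho c\partial_t\theta^g - \dive_y(\kappa\nabla_y\theta^g) = f^g$ together with the Robin boundary conditions $\kappa^g\nabla_y\theta^g\cdot n_\Gamma = \alpha^k(\theta - \theta^g)$ on $\Gamma^k$; finally, allowing $\varphi$ with $\varphi^f|_\Sigma = \varphi^s|_\Sigma = \psi$ arbitrary and using the already-derived equations to cancel interior contributions leaves the transmission condition $\jump{\kappa\nabla\theta}\cdot n = \sum_{k=f,s}\alpha^k\int_{\Gamma^k}(\theta-\theta^g)\,\di{\sigma_y}$ on $\Sigma$, which is \eqref{eq:5d}. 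The regularity $(\partial_t\theta,\partial_t\theta^g) \in L^2(S\times\Omega)\times L^2(S\times\Sigma\times\GrainCell)$ is inherited from the time-derivative bound \eqref{eq:time_derivative_estimate} of \cref{thm:existence} by weak lower semicontinuity of the norm under (two-scale) weak convergence.

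For stage (iii), uniqueness: given two weak solutions, the difference $(\Theta,\Theta^g)$ (with $f = f^g = 0$, $\theta_0 = \theta^g_0 = 0$, $v$ fixed) satisfies \eqref{eq:weak_formulation_case_A}; testing with $\varphi = (\Theta,\Theta^g)$, the convection term $(\rho c v\nabla\Theta,\Theta)_{\Omega^f}$ vanishes because $\dive v = 0$ and $v\cdot n = 0$ on $\partial\Omega^f$, the diffusion and Robin terms are nonnegative, so a Gronwall argument on $\frac12\ddt(\|\Theta\|^2_{L^2(\Omega)} + \|\Theta^g\|^2_{L^2(\Sigma\times\GrainCell)})$ forces $(\Theta,\Theta^g) \equiv 0$. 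Uniqueness of the limit then implies that every subsequence of $(\theta_\e,\theta^g_\e)$ has a further subsequence converging to the same limit, hence the full sequence converges. I expect the main obstacle to be stage (i), specifically the rigorous passage to the limit in the Robin interface terms: one must carefully handle that the test functions live on the $\e$-oscillating graphs $\Sigma^f_{i,\e}$, split off the trace on the flat $\Sigma$ as in the proof of \cref{lem:limits_case_a} (the $(1)$–$(3)$ decomposition), control the "vertical'' correction by the $\sqrt\e$-trace estimate, and reconcile the decomposition of $\jump{\theta_\e}$ into $\theta_\e^{k}$-traces versus $\theta^g_\e$-traces — the bulk part converging strongly to $\theta|_\Sigma$ and the grain part two-scale converging to $\theta^g|_{\Gamma^k}$. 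Everything else is, as the authors note, a routine adaptation of the standard two-scale homogenization machinery and the analysis in \cite{Eden2022effective, Neuss07}.
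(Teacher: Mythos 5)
Your overall architecture (limit passage in the weak form, identification of the strong system, uniqueness plus the subsequence argument) is the same as the paper's, and your stages (ii) and (iii) are essentially what the paper does --- its written proof consists only of the uniqueness energy estimate, with the limit passage delegated to the discussion preceding the theorem and to \cref{lem:limits_case_a}. However, stage (i) contains a concrete error: the ansatz $\varphi_\e = \varphi + \e\,\varphi^g(t,\tilde x, x/\e)$ is the wrong test function for Case (a). First, this $\varphi_\e$ is continuous across $\Sigmafg\cup\Sigmasg$ (or has a jump of order $\e$ if you add the oscillating part only on $\OmegaIdx{g}$), so the interface terms $(\alpha\jump{\theta_\e},\jump{\varphi_\e})_{\Sigmafg}+(\alpha\jump{\theta_\e},\jump{\varphi_\e})_{\Sigmasg}$ in \eqref{eq:weak_formulation} vanish in the limit and the Robin couplings $(\alpha(\theta^k-\theta^g),(\varphi^k-\varphi^g))_{\Sigma\times\Gamma^k}$ of \eqref{eq:weak_formulation_case_A} can never be produced; your later appeal to \eqref{eq:robin_convergence_a} ``with $\varphi=\varphi^f-\varphi^g$'' is inconsistent with your own ansatz. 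Second, the scaling is off by one power of $\e$ in the grain terms: since the two-scale pairing on $\OmegaIdx{g}$ carries a $1/\e$ normalization, one has $\frac{1}{\e}(\rho c\partial_t\theta_\e,\e\varphi^g)_{\OmegaIdx{g}}=(\rho c\partial_t\theta_\e,\varphi^g)_{\OmegaIdx{g}}\to 0$ and likewise $(\e\kappa\nabla\theta_\e,\nabla_y\varphi^g(\cdot,\cdot/\e))_{\OmegaIdx{g}}\to 0$, so the grain inertia and grain diffusion terms of the limit problem are lost as well. In Case (a) the grain temperature does not collapse onto the macroscopic trace; it remains an independent two-scale field, and the correct test function is the genuinely two-component element of $W_\e$ equal to $\varphi$ on $\Omega_\e$ and to $\varphi^g(t,\tilde x,x/\e)$ on $\OmegaIdx{g}$ at order one --- discontinuous across $\partial\OmegaIdx{g}$, which is admissible because $W_\e$ only requires $H^1$ regularity on each piece separately. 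With that choice the jump equals $\varphi-\varphi^g(\cdot,\cdot/\e)$, \cref{lem:limits_case_a} applies term by term, and one arrives at the product test space of \eqref{eq:weak_formulation_case_A}. The corrector ansatz you wrote is the one appropriate for Case (b), where $\kappa_\e^g=\kappa^g/\e$ forces the grain temperature to a macroscopic limit and $\varphi^g$ enters only at first order.

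A smaller point: in the uniqueness step you discard the convection term on the grounds that $v\cdot n=0$ on $\partial\Omega^f$, but Assumption \ref{item:A7} only gives $\dive v=0$; the limit velocity need not have vanishing normal trace (the simulated setups have inflow at the top). The paper instead estimates $(\rho c\, v\nabla\overline{\theta},\overline{\theta})_{\Omega^f}$ by Cauchy--Schwarz and absorbs it via Young's and Gronwall's inequalities, which is the argument you should use here as well; the conclusion of stage (iii) is otherwise unaffected.
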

\begin{proof}
    The homogenization procedure was explained in the step above. For uniqueness, consider that there are two different sets of solutions $(\theta_j, \theta^g_j)_{j=1,2}$.
    Their difference is denoted by $\overline{\theta}$.
    Utilizing the previous assumptions and applying an energy estimate for the difference leads the fluid temperature to
    \begin{linenomath*}\begin{align*}
        \ddt \|\overline{\theta}\|_{L^2(\Omega^f)}^2 + \|\nabla \overline{\theta}\|^2_{L^2(\Omega^f)}
        &+ |\Gammafg| \|\overline{\theta}\|_{L^2(\Sigma)} \\
        &\leq C\left(\|v\|_{L^\infty(S\times\Omega^f)}\|\nabla \overline{\theta}\|_{L^2(\Omega^f)}\|\overline{\theta}\|_{L^2(\Omega^f)} + \|\overline{\theta}\|_{L^2(\Sigma \times \GrainCell)} \|\overline{\theta}\|_{L^2(\Omega^f)}\right),
    \end{align*}\end{linenomath*}
    inside the solid domain to
    \begin{linenomath*}\begin{align*}
        \ddt \|\overline{\theta}\|_{L^2(\Omega^s)}^2 + \|\nabla \overline{\theta}\|^2_{L^2(\Omega^s)}
        &+ |\Gammasg| \|\overline{\theta}\|_{L^2(\Sigma)}
        \leq C \|\overline{\theta}\|_{L^2(\Sigma \times \GrainCell)} \|\overline{\theta}\|_{L^2(\Omega^s)}
    \end{align*}\end{linenomath*}
    and lastly for the cell problems to the estimate
    \begin{linenomath*}\begin{align*}
        \ddt \|\overline{\theta}\|_{L^2(\Sigma \times \GrainCell)}^2 + \|\nabla \overline{\theta}\|^2_{L^2(\Sigma \times \GrainCell)}
        &+ \|\overline{\theta}\|_{L^2(\Sigma \times \Gammafg)} + \|\overline{\theta}\|_{L^2(\Sigma \times \Gammasg)}
        \\
        &\leq C\left(\|\overline{\theta}\|_{L^2(\Sigma \times \GrainCell)} \|\overline{\theta}\|_{L^2(\Omega^f)} + \|\overline{\theta}\|_{L^2(\Sigma \times \GrainCell)} \|\overline{\theta}\|_{L^2(\Omega^s)}\right).
    \end{align*}\end{linenomath*}
    Applying both Young's and Gronwall's inequalities, we can conclude that $\overline{\theta} \equiv 0$ almost everywhere. Since the solution of  Eq \eqref{eq:hom_system_a} is unique, it follows that the whole sequence $\theta_\e$ converges. 
\end{proof}
\subsection{Homogenization of Case (b)}\label{sec:homogenization_b}
\begin{lemma}\label{lem:limits_case_b}
    There are limit functions $\theta \in L^2(S;H^1(\Omega))$, $\theta^g \in L^2(S; H^1(\Sigma))$, and $\theta^g_1 \in L^2(S\times \Sigma; H^1_{\Tilde{\#}}(\GrainCell)/\R)$ such that 
    \begin{linenomath*}\begin{alignat*}{3}
        i&)\ \chi_{\Omega_\e} \theta_\varepsilon \to \theta
         \ \ \text{in}\ L^2(S\times\Omega),\quad 
        &&ii&&)\  \widetilde{\nabla \theta_\varepsilon}\rightharpoonup\nabla \theta  \ \ \text{in}\ L^2(S\times\Omega)^d,
        \\
        iii&)\  \theta^g_\varepsilon\twosc \theta^g, 
        &&iv&&)\ \nabla \theta^g_\varepsilon
        \twosc \nabla_{\Tilde{x}} \theta^g + \nabla_y \theta^g_1 
    \end{alignat*}\end{linenomath*}
    at least up to a subsequence.
    Additionally, it holds 
    %
    \begin{linenomath*}\begin{equation*}\label{eq:robin_convergence_case_b}
        \lim_{\varepsilon \to 0}\int_S \int_{\Sigma^{k}_\varepsilon} \jump{\theta_\e}\varphi\left(t, \Tilde{x}, \frac{x}{\varepsilon}\right) \di{\sigma_x} \di{t}
        =  \int_S \int_{\Sigma} \int_{\Gamma^{k}}(\theta - \theta^g) \varphi(t, \tilde{x}, y) \di{y} \di{\Tilde{x}} \di{t}
    \end{equation*}\end{linenomath*}
    for $k=f,s$ and all admissible test functions $\varphi$.
\end{lemma}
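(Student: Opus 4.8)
The plan is to mirror the structure of the proof of \cref{lem:limits_case_a}, replacing the thin-layer two-scale compactness result \cref{lem:ts_limits}~$(i)$ with the connected-geometry version \cref{lem:ts_limits}~$(ii)$. First I would invoke \cref{thm:existence} in its Case~(b) form, so that the solution bound \eqref{eq:solution_esitmate} reads
\begin{linenomath*}\begin{equation*}
    \|\theta_\e\|_{L^\infty(S;L^2(\Omega_\e))} + \|\nabla\theta_\e\|_{L^2(S\times\Omega_\e)} + \tfrac{1}{\sqrt{\e}}\|\theta_\e\|_{L^2(S\times\OmegaIdx{g})} + \tfrac{1}{\sqrt{\e}}\|\nabla\theta_\e\|_{L^2(S\times\OmegaIdx{g})} + \|\jump{\theta_\e}\|_{L^2(S\times(\Sigmafg\cup\Sigmasg))} \leq C,
\end{equation*}\end{linenomath*}
since $\gamma=-\tfrac12$ now gives control of $\tfrac{1}{\sqrt\e}\|\nabla\theta_\e\|_{L^2(S\times\OmegaIdx{g})}$, together with the time-derivative bound \eqref{eq:time_derivative_estimate}. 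Points $i)$ and $ii)$ are then obtained exactly as in \cref{lem:limits_case_a}: extend ${\theta_\e}|_{\Omega_\e}$ via \cref{lem:extension_operators}, note the extended time derivative is bounded, apply Aubin--Lions to get strong $L^2(S\times\Omega)$ convergence of the extensions along a subsequence, and use $\chi_{\Omega_\e}\to 1$ in $L^2(\Omega)$ to pass to $\chi_{\Omega_\e}\theta_\e\to\theta$ and $\widetilde{\nabla\theta_\e}\rightharpoonup\nabla\theta$.

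Next I would establish $iii)$ and $iv)$. The stronger gradient bound $\tfrac{1}{\sqrt\e}\|\nabla\theta_\e\|_{L^2(S\times\OmegaIdx{g})}\le C$ is precisely the hypothesis of \cref{lem:ts_limits}~$(ii)$ (valid because $\OmegaIdx{g}$ is connected in Case~(b)), which directly furnishes $\theta^g\in L^2(S;H^1(\Sigma))$ and $\theta^g_1\in L^2(S\times\Sigma;H^1_{\Tilde\#}(\GrainCell)/\R)$ with ${\theta^g_\e}\twosc\theta^g$ and $\nabla\theta^g_\e\twosc\nabla_{\Tilde x}\theta^g+\nabla_y\theta^g_1$ along a further subsequence. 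One should also record that, since the macroscopic trace ${\theta_\e}|_{\Omega_\e}$ on $\Sigma$ converges (by compactness of the trace on the fixed Lipschitz domain $\Omega^f$, independent of $\e$) to $\theta|_\Sigma$, the two-scale limit $\theta^g$ is the genuine interface temperature and the identification is consistent.

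For the Robin-type convergence \eqref{eq:robin_convergence_case_b}, I would reuse verbatim the three-term splitting from the proof of \cref{lem:limits_case_a}: writing the boundary integral over $\Sigma^k_\e$ as a sum over the height-function graphs $\Sigma^k_{i,\e}$ via \eqref{eq:grap_rep_eps}, insert and subtract $(\mathbb{E}_\e\theta_\e)(\Tilde x,0)$ and then $\theta(\Tilde x,0)$. Term~(1) vanishes because $\|\mathbb{E}_\e\theta_\e(\Tilde x,\e\gamma_i(\Tilde x/\e))-\mathbb{E}_\e\theta_\e(\Tilde x,0)\|_{L^2(\omega_{i,\e})}\le\sqrt\e\,C_{ext}\|\theta_\e\|_{H^1(\Omega_\e)}\to 0$ (cf.\ \cite[Lemma 3.2]{Donato10}); term~(2) vanishes by strong convergence of the fixed-domain trace $\mathbb{E}_\e\theta_\e(\cdot,0)\to\theta(\cdot,0)$ in $L^2(\Sigma)$; term~(3) converges to $\int_\Sigma\int_{\Gamma^k}\theta\,\varphi$ since $\theta$ is $\e$-independent and $\varphi(\Tilde x,\Tilde x/\e,\gamma_i(\Tilde x/\e))$ oscillates. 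Subtracting the analogous identity for ${\theta^g_\e}|_{\Sigma^k_\e}$ — here the grain-trace two-scale limit on $\partial\OmegaIdx{g}$ exists by \cref{lem:ts_limits}~$(iii)$ (which holds for the connected geometry since $|\partial\Omega^g_\e|=\mathcal O(1)$) and is identified with $\theta^g$, now constant in $y$, by the same integration-by-parts argument as in \cref{lem:limits_case_a} applied to $\nabla\theta^g_\e\twosc\nabla_{\Tilde x}\theta^g+\nabla_y\theta^g_1$ — yields \eqref{eq:robin_convergence_case_b}.

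The main obstacle I anticipate is not any single estimate but the bookkeeping needed to correctly identify the two-scale limit of the \emph{trace} ${\theta^g_\e}|_{\partial\OmegaIdx{g}}$ as the $y$-independent function $\theta^g$ and to make sure the $\nabla_y\theta^g_1$ corrector does not pollute the Robin boundary term; in the connected case the gradient no longer scales like $\e\nabla\theta^g_\e\twosc\nabla_y\theta^g$ but rather $\nabla\theta^g_\e\twosc\nabla_{\Tilde x}\theta^g+\nabla_y\theta^g_1$, so the integration-by-parts identity linking the interior two-scale limit to the boundary two-scale limit must be set up with test functions $\varphi\in C^1(\overline{S\times\Sigma};C^1_{\Tilde\#}(\overline{\GrainCell}))^d$ and the $\tfrac1\e$-scaled term handled carefully — this is the place where the connectedness hypothesis and the periodicity condition $\{y\in\partial Z:y_i=1\}=\{y\in\partial Z:y_i=0\}+e_i$ are genuinely used. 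Everything else transfers with only cosmetic changes from the Case~(a) proof.
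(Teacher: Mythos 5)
Your proposal is correct and follows exactly the route the paper takes: the paper's proof of this lemma is a one-line reference back to \cref{lem:limits_case_a}, \cref{thm:existence}, the extension operators, and \cref{lem:ts_limits}, and your expansion — substituting the Case (b) bound $\e^{-1/2}\|\nabla\theta_\e\|_{L^2(S\times\Omega^g_\e)}\le C$ and \cref{lem:ts_limits}~$(ii)$ for the Case (a) ingredients, while reusing the extension/Aubin--Lions argument and the three-term splitting for the interface terms — is precisely what that reference implies. The subtleties you flag about identifying the trace limit with the $y$-independent $\theta^g$ are real but are exactly the "properties of two-scale convergence given in \cref{lem:ts_limits}" the paper defers to.
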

\begin{proof}
    The proof follows similarly to \cref{lem:limits_case_a} by utilizing the estimates in \cref{thm:existence}, the boundedness of the extension operators, and the properties of two-scale convergence given in \cref{lem:ts_limits}.
\end{proof}
Utilizing \cref{lem:limits_case_b}, we are also able to pass to the limit for Case (b). Again, the limit procedure is standard, similar to the previous section and therefore skipped. The derivation of the effective conductivity $\Tilde{\kappa}$ is also standard (see for example \cite[Section 2]{Allaire92}) and results from the fact that the gradient of $\theta^g_1$ can be represented by a linear combination of the derivatives of $\theta^g$ and the cell solutions $\psi_i$ given by Eq \eqref{eq:cell_helper_problem}. For completeness, we state the weak formulation; the limit $(\theta, \theta^g)$ fulfills $(\theta(0, \cdot), \theta^g(0, \cdot)) = (\theta_0, \theta^g_0)$ and the equation
\begin{linenomath*}\begin{equation}\label{eq:weak_formulation_case_B}
    \begin{split}
        & (\rho c \partial_t \theta, \varphi)_{\Omega} + (\kappa\nabla \theta, \nabla\varphi)_{\Omega}  + (\rho c v \nabla \theta , \varphi)_{\Omega^f} + 
        (|Z|\rho c \partial_t \theta^g, \varphi^g)_{\Sigma} + (\Tilde{\kappa} \nabla_{\Tilde{x}} \theta^g, \nabla_{\Tilde{x}}\varphi^g)_{\Sigma} 
        \\
        &+(\alphafg |\Gammafg| (\theta^f - \theta^g), (\varphi^f - \varphi^g))_{\Sigma} + (\alphasg |\Gammasg|(\theta^s - \theta^g), (\varphi^s - \varphi^g))_{\Sigma} 
        = (f, \varphi)_{\Omega} + (|Z| f^g, \varphi^g)_{\Sigma},
    \end{split}
\end{equation}\end{linenomath*}
for all $\varphi = (\varphi^f, \varphi^s, \varphi^g) \in L^2(S; H^1(\Omega^f)) \times L^2(S; H^1(\Omega^s)) \times L^2(S; H^1(\Sigma))$ with $\varphi^f_{|\Sigma} = \varphi^s_{|\Sigma}$. Similar to the previous sections, the complete results are stated in \cref{thm:hom_case_b}.
\begin{theorem}[Homogenization in the connected domain (Case (b))]\label{thm:hom_case_b}Let the Assumptions \ref{item:A1}--\ref{item:A7} be satisfied in their (b)-variants. In the limit $\varepsilon \to 0$, it holds that $\theta_\varepsilon \to \theta$ in $L^2(S\times \Omega)$ and $\theta^g_\varepsilon \twosc \theta^g$ in $L^2(S\times\Sigma)$, where
\begin{linenomath*}\begin{equation*}
    \theta \in L^2(S;H^1(\Omega)) \text{ and } \theta^g \in L^2(S; H^1(\Sigma))
\end{equation*}\end{linenomath*}
such that
\begin{linenomath*}\begin{equation*}
    (\partial_t \theta, \partial_t \theta^g) \in L^2(S \times \Omega) \times L^2(S \times \Sigma).
\end{equation*}\end{linenomath*}
\begin{subequations}\label{eq:hom_system_b}
    The limit $(\theta, \theta^g)$ is characterized as the unique weak solution of
    \begin{linenomath*}\begin{alignat}{2}
         \rho c \partial_t \theta - \dive{(\kappa \nabla \theta - \rho c v\theta)} &= f \quad &&\text{ in } S \times \Omega^f,
         \\
         \rho c \partial_t \theta - \dive{(\kappa \nabla \theta)} &= f \quad &&\text{ in } S \times \Omega^s,
         \\
         \jump{\kappa \nabla \theta}\cdot n&= \tilde{\alpha} (\theta-\theta^g)\quad &&\text{ on } S \times \Sigma, \label{eq:6d}
    \end{alignat}\end{linenomath*}
    with outer boundary and initial conditions
    \begin{linenomath*}\begin{alignat}{2}
        \theta &= \theta_0\quad &&\text{ in } \{0\} \times \Omega, 
         \\
        \kappa \nabla \theta \cdot \nu &= 0 &&\text{ on } S \times \partial \Omega,
    \end{alignat}\end{linenomath*}
    coupled with an interface temperature
    \begin{linenomath*}\begin{alignat}{2}
         |\GrainCell|\rho c \partial_t \theta - \dive_{\Tilde{x}}{(\Tilde{\kappa} \nabla_{\Tilde{x}} \theta)} &= |\GrainCell| f^g + \tilde{\alpha} (\theta-\theta^g)\quad &&\text{ in } S \times \Sigma, 
         \\
         \theta^g &= \theta^g_0 &&\text{ on } \{0\} \times \Sigma, 
         \\
         \Tilde{\kappa}^g \nabla_{\Tilde{x}} \theta^g \cdot \nu &= 0 &&\text{ on } S \times \partial \Sigma.
    \end{alignat}\end{linenomath*}
    Here, we use the notation $\nabla_{\Tilde{x}}u=(\partial_{x_1}u, \dots, \partial_{x_{d-1}}u, 0)$.
    The effective heat exchange coefficient is given by $\tilde{\alpha}=\alphafg |\Gammafg|+\alphasg |\Gammasg|$ and the effective conductivity $\Tilde{\kappa} \in \R^{d \times d}$ is given (for $i,j=1,\dots,d-1$) by 
    \begin{linenomath*}\begin{equation}\label{eq:effective_diffusion}
        \Tilde{\kappa}_{ij} = \kappa^g \int_{\GrainCell} (\nabla_y \psi_i + e_i) \cdot e_j \di{y} 
        \quad \text{and} \quad
        \Tilde{\kappa}_{dj} = \Tilde{\kappa}_{id}=0,
    \end{equation}\end{linenomath*}
    where $e_i$ is the $i$--th unit vector and $\psi_i$ are the zero-average solutions, with a period of 1 in $\Tilde{y}$, of
    \begin{linenomath*}\begin{equation}\label{eq:cell_helper_problem}
        \begin{split}
            -\Delta_y \psi_i &= 0 \quad \hspace{17pt} \text{ in } \GrainCell, \\
            -\nabla_y \psi_i \cdot n_\Gamma &= e_i \cdot n \quad \text{ on } \left(\partial \GrainCell \setminus \partial Y^d \right),
        \end{split}
    \end{equation}\end{linenomath*}
    and $\psi_d \equiv 0$.
\end{subequations}
\end{theorem}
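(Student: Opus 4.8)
The plan is to carry out the three-step two-scale limit sketched before the statement --- pass to the limit in the weak formulation \eqref{eq:weak_formulation}, decouple the macroscopic and cell equations, then upgrade by a density argument --- and afterwards to prove uniqueness of the limit problem; once uniqueness is available, the subsequential convergences of \cref{lem:limits_case_b} automatically become convergence of the whole sequence $\theta_\e$. First I would record the convergences $i)$--$iv)$ of \cref{lem:limits_case_b} together with the interface (trace) convergence stated there. The asserted regularity of the time derivatives follows from \eqref{eq:time_derivative_estimate}: the extension $\mathbb{E}_\e\theta_\e$ has time derivative bounded in $L^2(S\times\Omega)$ (cf.\ the proof of \cref{lem:limits_case_a}), and its weak limit must be $\partial_t\theta$ because $\mathbb{E}_\e\theta_\e\to\theta$, so $\partial_t\theta\in L^2(S\times\Omega)$; similarly $\tfrac{1}{\sqrt{\e}}\|\partial_t\theta^g_\e\|_{L^2(S\times\OmegaIdx{g})}\le C$ yields a two-scale limit of $\partial_t\theta^g_\e$ which is identified with $\partial_t\theta^g$ and is $y$-independent in Case (b), hence $\partial_t\theta^g\in L^2(S\times\Sigma)$.

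For the limit passage I would insert into \eqref{eq:weak_formulation} a bulk test function $\varphi\in C^\infty(\overline{S\times\Omega})$ and a grain test function $\varphi^g_\e(t,x)=\varphi^g(t,\Tilde{x})+\e\,\varphi_1^g(t,\Tilde{x},x/\e)$, with $\varphi^g\in C^\infty(\overline{S\times\Sigma})$ and $\varphi_1^g\in C^\infty(\overline{S\times\Sigma};C^\infty_{\Tilde{\#}}(\overline{\GrainCell}))$. The bulk terms pass to the limit using the strong convergence of $\chi_{\Omega_\e}\theta_\e$, the strong convergence of $\hat{v}_\e$ from Assumption \ref{item:A7}, and the weak convergence of $\widetilde{\nabla\theta_\e}$; the sources converge by Assumption \ref{item:A6}. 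In the grain, $\nabla\varphi^g_\e=\nabla_{\Tilde{x}}\varphi^g+\nabla_y\varphi_1^g(t,\Tilde{x},x/\e)+\mathcal{O}(\e)$, and since \eqref{eq:solution_esitmate} with $\gamma=-\tfrac12$ gives $\|\nabla\theta^g_\e\|_{L^2(S\times\OmegaIdx{g})}\le C\sqrt{\e}$ (which kills the $\mathcal{O}(\e)$ remainder, and the analogous remainders in the time and source terms), the scaled diffusion term $\tfrac{1}{\e}(\kappa^g\nabla\theta^g_\e,\nabla\varphi^g_\e)_{\OmegaIdx{g}}$ converges by \cref{def:two_scale} and the two-scale limit of the grain gradient in \cref{lem:limits_case_b} to $\kappa^g\int_\Sigma\int_{\GrainCell}(\nabla_{\Tilde{x}}\theta^g+\nabla_y\theta^g_1)\cdot(\nabla_{\Tilde{x}}\varphi^g+\nabla_y\varphi_1^g)\di{y}\di{\Tilde{x}}$, the scaled time term to $(|\GrainCell|\rho c\,\partial_t\theta^g,\varphi^g)_\Sigma$, and the scaled source to $(|\GrainCell|f^g,\varphi^g)_\Sigma$; the Robin terms on $\Sigma_\e^f$ and $\Sigma_\e^s$ converge, by the interface convergence of \cref{lem:limits_case_b} and the fact that $\theta^g$ and the limiting grain trace do not depend on $y$, to $\alphafg|\Gammafg|(\theta^f-\theta^g)(\varphi^f-\varphi^g)$ and $\alphasg|\Gammasg|(\theta^s-\theta^g)(\varphi^s-\varphi^g)$ integrated over $\Sigma$. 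This produces \eqref{eq:weak_formulation_case_B}. Choosing $\varphi=\varphi^g=0$ isolates the cell identity $\int_{\GrainCell}(\nabla_{\Tilde{x}}\theta^g+\nabla_y\theta^g_1)\cdot\nabla_y\varphi_1^g\di{y}=0$ for a.e.\ $(t,\Tilde{x})$, which by linearity forces $\theta^g_1=\sum_{i=1}^{d-1}\partial_{x_i}\theta^g\,\psi_i$ with $\psi_i$ solving \eqref{eq:cell_helper_problem} and $\psi_d\equiv0$; reinserting this collapses the $\GrainCell$-integral to $(\Tilde{\kappa}\nabla_{\Tilde{x}}\theta^g,\nabla_{\Tilde{x}}\varphi^g)_\Sigma$ with $\Tilde{\kappa}$ as in \eqref{eq:effective_diffusion}. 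A density argument extends \eqref{eq:weak_formulation_case_B} to all admissible $L^2(S;H^1)$ test functions, and integrating by parts (first with $\varphi^g=0$, then with $\varphi^f=\varphi^s=0$, using $\theta^f_{|\Sigma}=\theta^s_{|\Sigma}$ since $\theta\in L^2(S;H^1(\Omega))$) yields the strong system in the statement with $\tilde{\alpha}=\alphafg|\Gammafg|+\alphasg|\Gammasg|$.

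Uniqueness would be proved exactly as in \cref{thm:hom_case_a}: for two solutions with difference $\overline\theta$, test \eqref{eq:weak_formulation_case_B} with $\overline\theta$ itself. The convection term is absorbed by Young's inequality using $v\in L^2(S;H^1(\Omega^f))$, the diffusion terms are nonnegative, and the coupling contributions on $\Sigma$ combine to $\tilde{\alpha}\,\|\overline\theta-\overline\theta^g\|_{L^2(\Sigma)}^2\ge0$ --- the interface condition in the bulk equation and the source term of the $\theta^g$-equation carry opposite signs, so nothing negative survives --- leaving a Gronwall-type inequality for $t\mapsto\|\overline\theta(t)\|_{L^2(\Omega)}^2+|\GrainCell|\,\|\overline\theta^g(t)\|_{L^2(\Sigma)}^2$. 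Hence $\overline\theta\equiv0$ and $\overline\theta^g\equiv0$, and therefore the full sequence converges to the homogenized solution.

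The step I expect to be the main obstacle is the passage to the limit in the Robin terms on $\Sigma_\e^f$ and $\Sigma_\e^s$: this is the only place where the fine geometry is genuinely used --- the graph decomposition of $\partial\GrainCell$ and the $\e$-independent trace estimate from \cref{lem:trace_estimate}, together with the interface convergence of \cref{lem:limits_case_b} --- and one has to verify carefully that the oscillating grain test function has the correct limiting trace $\varphi^g$ on these collapsing surfaces and that the weighted limit yields the factors $|\Gammafg|$ and $|\Gammasg|$. The other point requiring care is the correct bookkeeping of the constants $|\GrainCell|$, $|\Gammafg|$, $|\Gammasg|$ and of the cell-problem sign conventions through the $\tfrac{1}{\e}$-scalings, though this is routine once the convergences above are established.
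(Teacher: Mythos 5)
Your proposal is correct and follows essentially the same route as the paper: the convergences of \cref{lem:limits_case_b}, oscillating test functions of the form $\varphi^g+\e\varphi_1^g(\cdot,x/\e)$ to reach \eqref{eq:weak_formulation_case_B}, the standard decoupling of $\theta_1^g$ via the cell solutions $\psi_i$ to obtain $\Tilde{\kappa}$, and an energy/Gronwall uniqueness argument as in \cref{thm:hom_case_a} to upgrade subsequential to full-sequence convergence. The only small imprecision is that the convection term in the uniqueness estimate is absorbed using the $L^\infty$ bound on $v$ inherited from \ref{item:A4} (as in the paper's Case (a) argument), not merely $v\in L^2(S;H^1(\Omega^f))$.
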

\begin{proof}
    The limiting procedure is shown in \cref{lem:limits_case_b}. The uniqueness follows with a similar argument as in \cref{thm:hom_case_a}. 
\end{proof}
\begin{remark}\label{re:non_perfect_transfer_discon}
    In the case of non--perfect heat transfer between the fluid and solid domain mentioned in \cref{remark:thermal_res}, the effective temperature field $\theta$ would split into two functions $(\theta^f, \theta^s)$ that belong to the space $ L^2(S;H^1(\Omega^f)) \times L^2(S;H^1(\Omega^s))$, and Eq \eqref{eq:5d} as well as Eq \eqref{eq:6d} would be replaced with 
    \begin{linenomath*}\begin{alignat*}{2}
        \kappa^f \nabla \theta^f \cdot n &= \alphafs |\Gammafs| \left(\theta^f-\theta^s\right) + \alphafg \int_{\Gammafg} \theta^f-\theta^g \di{\sigma_y} \quad &&\text{ on } S \times \Sigma,
        \\
        \kappa^s \nabla \theta^s \cdot n &= \alphafs |\Gammafs| \left(\theta^f-\theta^s\right) + \alphasg \int_{\Gammasg} \theta^s-\theta^g \di{\sigma_y} \quad &&\text{ on } S \times \Sigma,
    \end{alignat*}\end{linenomath*}
    so that both the macroscopic temperature and heat flow are discontinuous across $\Sigma$.
\end{remark}
%
\section{Numerical simulations}\label{sec:simulations}
In this section, we introduce a numerical approach for solving the effective models and verify our homogenization results through a comparison with direct numerical simulations of the microscale model (\ref{eq:epsilon-problem}). In the following, we consider a rectangular domain $\Omega = [0, L] \times [0, B]\times [-H, H]$. The simulation studies are carried out with the FEM library FEniCS \cite{Fenics}, and Gmsh \cite{gmsh} is utilized to generate the various meshes.

We use the following stationary Navier--Stokes equation to model the underlying flow field 
%
\begin{linenomath*}\begin{alignat*}{2}
    \rho (u_\e \cdot \nabla) u_\e &= \mu \Delta u_\e - \nabla p_\e
    \quad \quad &&\text{ in } \OmegaIdx{f}, 
    \\
    \nabla \cdot u_\e &= 0 \quad &&\text{ in } \OmegaIdx{f}, 
    \\
    u_\e &= 0 \quad &&\text{ on } \Sigmafg \cup \Sigmafs, 
    \\
    u_\e &= u_{in} \quad  &&\text{ on } \partial\OmegaIdx{f} \cap \{x_3 = H\}, 
    \\
    \mu \nabla u_\e \nu - p_\e \nu &= 0 \quad  &&\text{ on } \partial\OmegaIdx{f} \cap \{x_1 \in \{0, L\}\}, 
    \\
    u_\e &\text{ is $\e$-periodic in } x_2. 
\end{alignat*}\end{linenomath*}
In the homogenized case, the same equation is solved, just in $\Omega^f$ and on $\Sigma$ instead of $\OmegaIdx{f}$ and $\Sigmafg \cup \Sigmafs$. Classical Taylor--Hood elements \cite{TAYLOR197373} of second and first order for velocity and pressure, respectively, are used to compute the solution. 

In the original problem with the resolved grain structures, we encounter discontinuous temperatures. To compute the solution, we utilize the discontinuous Galerkin method \cite[Chapter 4]{Riviere2008} 
with piecewise linear functions. To stabilize the diffusion advection equation, we utilize the SUPG method \cite{BrooksS82}. Under consideration of the used fluid velocity, a Dirichlet condition is applied at the upper boundary 
\begin{linenomath*}\begin{equation*}
    \theta = 0 \quad  \text{ on } \partial\OmegaIdx{f} \cap \{x_3 = H\}, 
\end{equation*}\end{linenomath*}
e.g., cooling fluid is supplied from the top and a periodic boundary condition in the $x_2$--direction.
These boundary conditions are more realistic regarding the motivating grinding process than homogeneous Neumann conditions.
Please note that these modified conditions were not specifically considered in our analysis.
Still, the periodic boundary condition could be incorporated without problems.
This also holds for the Dirichlet condition, since it is only set at the top of the domain away from the grain layer.

Next, we list the numeric values we used for the occurring parameters. The size of $\Omega$ is $L=B=H=1$. For the conductivity, we use $\kappa^f =0.1, \kappa^s=1.0, \kappa^g = 2.0$ and assume a normalization of heat capacity and density, to be precise $\rho^k, c^k = 1$ for $k=f,s,g$. For the fluid, the viscosity is set to $\mu=1$ and the constant inflow $u_{in}=(0, -1, 0)$ at $\partial\OmegaIdx{f} \cap \{x_3 = H\}$. With $h, h_{\GrainCell}, h_\Sigma$, we denote the largest diameter of the mesh elements inside $\Omega, \GrainCell$, and $\Sigma$, respectively. If not stated otherwise, we use $h=h_{\GrainCell}=h_\Sigma=0.05$. We set $\alpha = \alphafg = \alphasg$ and study the influence of the different exchange values. The heat source is set to $f^f=f^s=0$ and either $f^g\equiv 1.0$ or
\begin{linenomath*}\begin{equation*}
    f^g(x) = 
        \begin{cases}
            1 \quad &\text{ if } \sqrt{(x_1 - \frac{1}{2})^2 + (x_2 - \frac{1}{2})^2} \leq 0.3, \\
            0 &\text{ otherwise}.
        \end{cases}
\end{equation*}\end{linenomath*}
The constant heat source is only used in the convergence study $\varepsilon\to 0$, since simulating the whole domain for a small $\varepsilon$ was not feasible on our workstation.

As grain structures, we consider the cells shown in \cref{tab:pore_structures}: for the disconnected case, a sphere, and for the connected case, spheres connected with planar cylinders. The effective conductivity $\Tilde{\kappa}^g$ has been computed with a mesh size $h_{\GrainCell}=0.02$. 

\begin{table}[H]
    \centering
    \caption{The grain structures used in the numerical simulations. } \begin{tabular}{lcccc}
    	 \hline
          & Parameters & $|\Gammafg|, |\Gammasg|$ & $|\GrainCell|$ & $\Tilde{\kappa}$ \\
         \hline
         &&& 
         \\[\dimexpr-\normalbaselineskip+2pt]
         \noindent\parbox[c]{20mm}{\includegraphics[width=20mm, height=20mm]{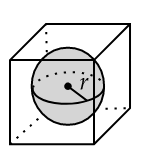}}
         & $r=0.4$ 
         & 1.01 & 0.27 & -- \\
        
         &&& 
         \\[\dimexpr-\normalbaselineskip+2pt]
         \noindent\parbox[c]{20mm}{\includegraphics[width=20mm, height=20mm]{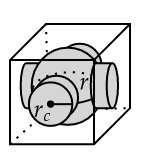}}
         & $r = 0.4, r_c = 0.2$
         & 1.12 & 0.34 & $\kappa^g \begin{pmatrix}
                    0.2 & 0.0 & 0.0 \\
                    0.0 & 0.2 & 0.0 \\
                    0.0 & 0.0 & 0.0 
                 \end{pmatrix}$
    \\\hline
    \end{tabular}
   
    \label{tab:pore_structures}
\end{table}

The homogenized model removes the $\e$-sized structures, which are expensive to numerically resolve from the problem, but introduces a new challenge; we numerically have to couple functions that are defined on the different domains $\Omega$, $\Sigma$ and $\Sigma \times \GrainCell$. Here, we realize this coupling by an iterative algorithm. At a given time step, we first fix the temperature field $\theta^g$ to calculate $\theta^f$ and $\theta^s$, then calculate $\theta^g$ with fixed $\theta^f$ and $\theta^s$, and then repeat this until we can detect convergence of the temperatures. This procedure is explained in more detail in \cref{algo:iterative_scheme}. Another possible approach would be to couple the functions directly by constructing one large linear system, similar to \cite{Gahn18_FEM} where a mixed FEM is analyzed. This, of course, would circumvent the need for iterations but would lead in Case (a) to a large system matrix. In the iterative scheme, we represent all temperature fields by continuous and piecewise linear functions. 
\begin{algorithm}[ht]
	\caption{Iterative scheme for one time step of the homogenized models}
    \label{algo:iterative_scheme}
	\KwIn{Information $\theta^{f}(t_n, \cdot), \theta^s(t_n, \cdot), \theta^g(t_n, \cdot)$, at time step $t_n$, and tolerance $\tau$.}
	Do the time step  $t_n \to t_{n+1}$, with $\theta^{f}(t_n, \cdot), \theta^s(t_n, \cdot), \theta^g(t_n, \cdot)$, to compute $\theta^{f}_{0}(t_{n+1}, \cdot), \theta^{s}_{0}(t_{n+1}, \cdot), \theta^{g}_{0}(t_{n+1}, \cdot)$.\\
    Set $i=0, E_i \geq \tau$. \\
	\While{$E_i \geq \tau$}{
    	Use $\theta^g_{i}(t_{n+1}, \cdot)$ to redo the time step $t_n \to t_{n+1}$ of $\theta^{f}$ and $\theta^{s}$, denote the solutions with $\theta^{f}_{i+1}(t_{n+1}, \cdot), \theta^{s}_{i+1}(t_{n+1}, \cdot)$. \\
        Redo the time step of $\theta^g$ with $\theta^{f}_{i+1}(t_{n+1}, \cdot), \theta^{s}_{i+1}(t_{n+1}, \cdot)$, denote the solution by $\theta^{g}_{i+1}(t_{n+1}, \cdot)$.\\
        Compute $E_{i+1}$ defined in Eq \eqref{eq:iterative_error}. \\
        Set i = i + 1.
        }
    \KwRet{$\theta^{f}_{i}(t_{n+1}, \cdot), \theta^{s}_{i}(t_{n+1}, \cdot) \text{ and } \theta^g_{i}(t_{n+1}, \cdot)$.}
\end{algorithm}

A similar iterative procedure was already applied in our previous work \cite{Eden2022effective}. There, we also showed, in case the time step is done with the backward Euler method, the convergence of the scheme \cite[Appendix A]{Eden2022effective}. The same argumentation can be applied in the present case. Denote with $\theta_{i}^k$ the solution in the $i$--th iteration of the algorithm and define the difference between the two following iterations
\begin{linenomath*}\begin{equation*}
    e_i^k = \|\theta^k_{i} - \theta^k_{i-1}\|_{L^2(\Omega^k)} \text{ and } e_i^g = \|\theta^g_{i} - \theta^g_{i-1}\|_{L^2(G)},
\end{equation*}\end{linenomath*}
for $i>0$ and $G=\Sigma \times \GrainCell$ or $G=\Sigma$ depending on Case (a) or Case (b). The total difference is given by 
\begin{linenomath*}\begin{equation}\label{eq:iterative_error}
    E_{i} =  \sqrt{(e_i^f)^2 + (e_i^s)^2 + (e_i^g)^2}.
\end{equation}\end{linenomath*}
At a given time point $t_n$ and iteration step $i\geq 2$ it then holds
\begin{linenomath*}\begin{equation}\label{eq:iterative_conv}
    E_{i} \leq C \left(\frac{\alphafg |\Gammafg| + \alphasg |\Gammasg|}{\alphafg |\Gammafg| + \alphasg |\Gammasg| + \frac{\rho^f c^f}{\Delta t} - (\rho^f c^f)^2 \|v\|_{L^\infty(S \times \Omega^f)}^2 \frac{1}{2\delta}}\right)^{i-1.5} \|\theta_{1}^f(t_n, \cdot) - \theta_{0}^f(t_n, \cdot)\|_{L^2(\Sigma)}, 
\end{equation}\end{linenomath*}
for $C > 0$ and $0 < \delta < 2\kappa^f$. If $v \in L^\infty(S\times \Omega^f)$ and for small enough time step $\Delta t$, we can ensure convergence of the scheme. In the case that 
\begin{linenomath*}\begin{equation*}
    \int_{\Omega^f} c^f \rho^f v \nabla \theta^f \theta^f = \frac{1}{2}\int_{\partial \Omega^f} c \rho \, v \, {\theta^f}^2 \geq 0,
\end{equation*}\end{linenomath*}
which holds for the considered boundary conditions in our simulations, one obtains
\begin{linenomath*}\begin{equation*}
    E_{i} \leq C \left(\frac{\alphafg |\Gammafg| + \alphasg |\Gammasg|}{\alphafg |\Gammafg| + \alphasg |\Gammasg| + \frac{\rho^f c^f}{\Delta t} + \Tilde{C}}\right)^{i-1.5} \|\theta_{1}^f(t_n, \cdot) - \theta_{0}^f(t_n, \cdot)\|_{L^2(\Sigma)}, 
\end{equation*}\end{linenomath*}
$\Tilde{C}>0$. In the above estimate, the convergence no longer requires a sufficiently small time step $\Delta t$. In all subsequent simulations, we choose the tolerance $\tau = 1.e-6$ in \cref{algo:iterative_scheme}.

To verify the accuracy of the numerical simulations, a convergence study with respect to the mesh resolution for the stationary problem is presented in the Appendix, see Figure \ref{fig:convergence_for_mesh_resolution}. For all models, both effective models and the model with resolved microstructure, we achieve for $\theta$ the expected convergence behavior of $\mathcal{O}(h^2)$ in the $L^2$--norm. 

\textbf{Investigation of the iterative algorithm.}
Before verifying the homogenized model in the limit $\varepsilon \to 0$, we first investigate the iterative algorithm for the homogenized model in more detail.
In Figure \ref{fig:iterative_convergence}, the convergence of the stationary temperature model for different $\alpha$ is shown. For increasing values of $\alpha$ (or surfaces $|\Gammafg|$ and $|\Gammasg|$) the convergence speed decreases noticeably, in both Cases (a) and (b). This trend can also be seen in the estimation (\ref{eq:iterative_conv}), where the term in the brackets also approaches 1 for increasing $\alpha$. Two possible ways to circumvent this are solving a mixed FEM directly, like mentioned above, or applying a scheme to speed up the convergence of the iterative procedure \cite{Polyak1964}. Here, a simple relaxation method of the type
\begin{linenomath*}\begin{equation*}
    \theta_{i+1} = \theta_{i} + \eta (\Tilde{\theta}_{i+1} - \theta_{i}),
\end{equation*}\end{linenomath*}
\begin{figure}[H]
	\centering
	\includegraphics[width=\linewidth]{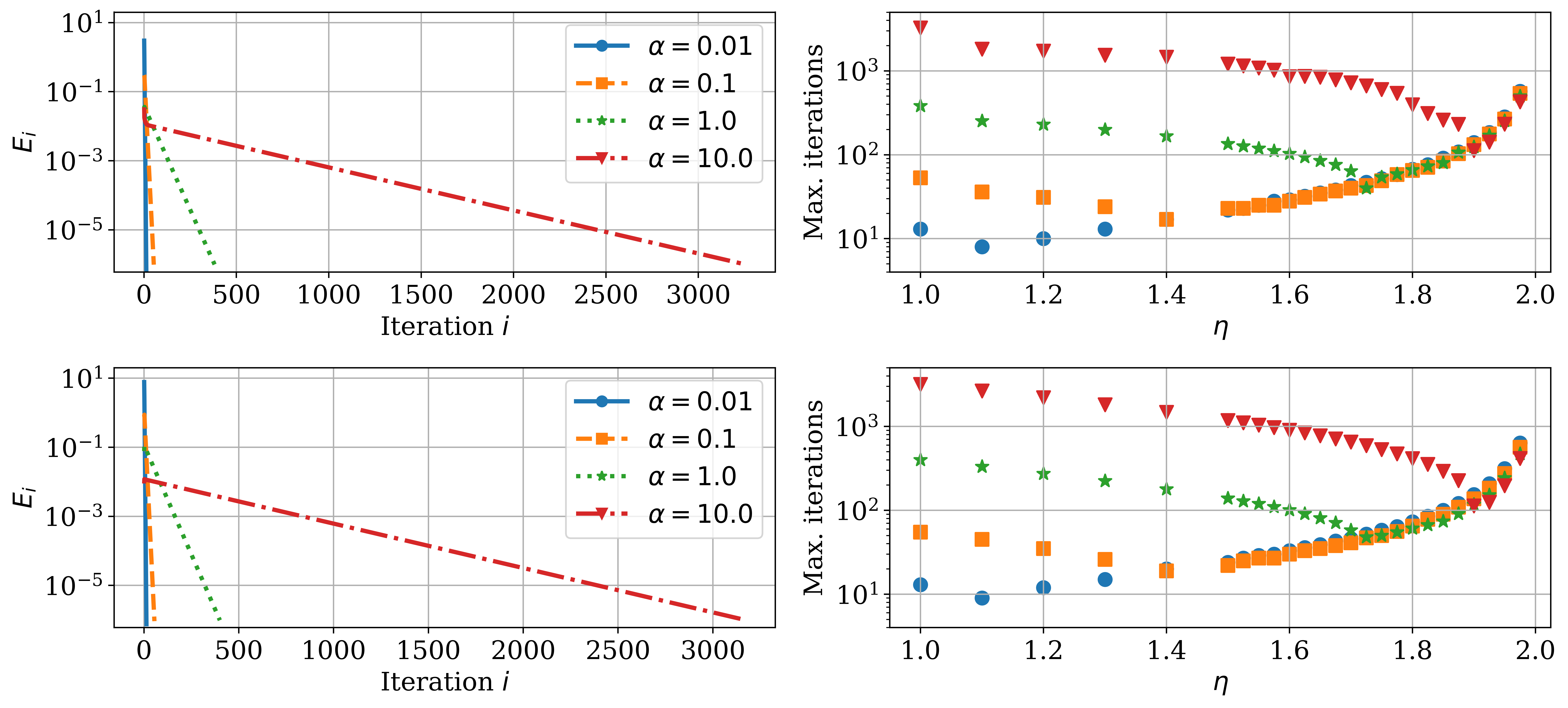}
	\caption{Convergence of the iterative \cref{algo:iterative_scheme} for different heat exchange values $\alpha=\alphafg=\alphasg$. At the top is the disconnected model and at the bottom is the connected case. The left side shows the linear trend of the difference between success iterations $E_i$. Depicted on the right is the maximal number of iterations with respect to $\eta$, until the tolerance $\tau$ is reached.}
	\label{fig:iterative_convergence}
\end{figure}

with $\eta \in [1, 2.0)$ and $\Tilde{\theta}_{i+1}$ the solution with respect to $\theta_i$, is used. The results are also presented in Figure \ref{fig:iterative_convergence}. We observe that this approach lowers the number of needed iterations considerably. Additionally, the optimal relaxation parameter $\eta$ increases with the heat exchange $\alpha$ (and/or surfaces $|\Gammafg|$ and $|\Gammasg|$). 
Last, we want to mention that we started in the stationary problem with $\theta_0 \equiv 0$. In the non-stationary case, one would start with the temperature of the previous time step, which generally should be close to the solution at the next step and therefore need fewer iterations. 

One additional aspect arises in the homogenized model  (\ref{eq:hom_system_a}) of the disconnected case. Here, the cell problems have to be computed on the whole domain $\Sigma \times \GrainCell$, which is of dimension higher than 3 and not directly implementable in FEniCS. In addition to the previously explained iterative procedure, we therefore also choose a discrete number of points $\{\mathbf{x}_j\}_{j=1}^M \subset \Sigma$ to first compute the cell temperature $\theta^g$ only at these specific points. To then evaluate $\theta^g$ for arbitrary $\Tilde{x} \in \Sigma$, we utilize an interpolation scheme in between the discrete points $\{\mathbf{x}_j\}_{j=1}^M$. To be consistent with the discretization of $\Sigma$, for each degree of freedom on $\Sigma$ one could solve the corresponding cell problem and then use an interpolation fitting to the discrete temperature space, e.g., linear interpolation for piecewise linear functions. Note that the use of such an interpolation introduces an additional consistency error in the numerical scheme.

In Figure \ref{fig:cell_number_influence} the influence of the position and number of $\{\mathbf{x}_j\}_{j=1}^M$ on the solution is demonstrated for the case $h=\frac{1}{16}$. For a given $M$, the points are distributed on a uniform grid in $[0, 1] \times [0,1]$. The grain temperature $\theta^g(\Tilde{x}, y)$ is constructed by linearly interpolating between the discrete cell problems $\theta^g(\mathbf{x}_j, y)$ with respect to $\tilde{x}$. We observe that, to obtain reasonable results, there must be enough cell problems to correctly resolve the circular heat source $f^g$. Interestingly, the solution still changes when more cells than the number of mesh vertices are used; see the results for $M>\frac{1}{h^2}$. This arises from the quadrature scheme in FEniCS, where points in between the degrees of freedom are used. 

We can conclude that the cell problems should be positioned such that all important areas and local effects are captured. One has to keep in mind that the computational effort grows with the number $M$. One advantage of the iterative scheme is that all cell problems can be solved independently, which allows for an easy parallelization of the problem. In the simulations of the previous and following Sections, we used $M=16^2$.
\begin{figure}[ht]
    \centering
    \begin{subfigure}[b]{0.43\textwidth}
         \centering
         \includegraphics[width=\textwidth]{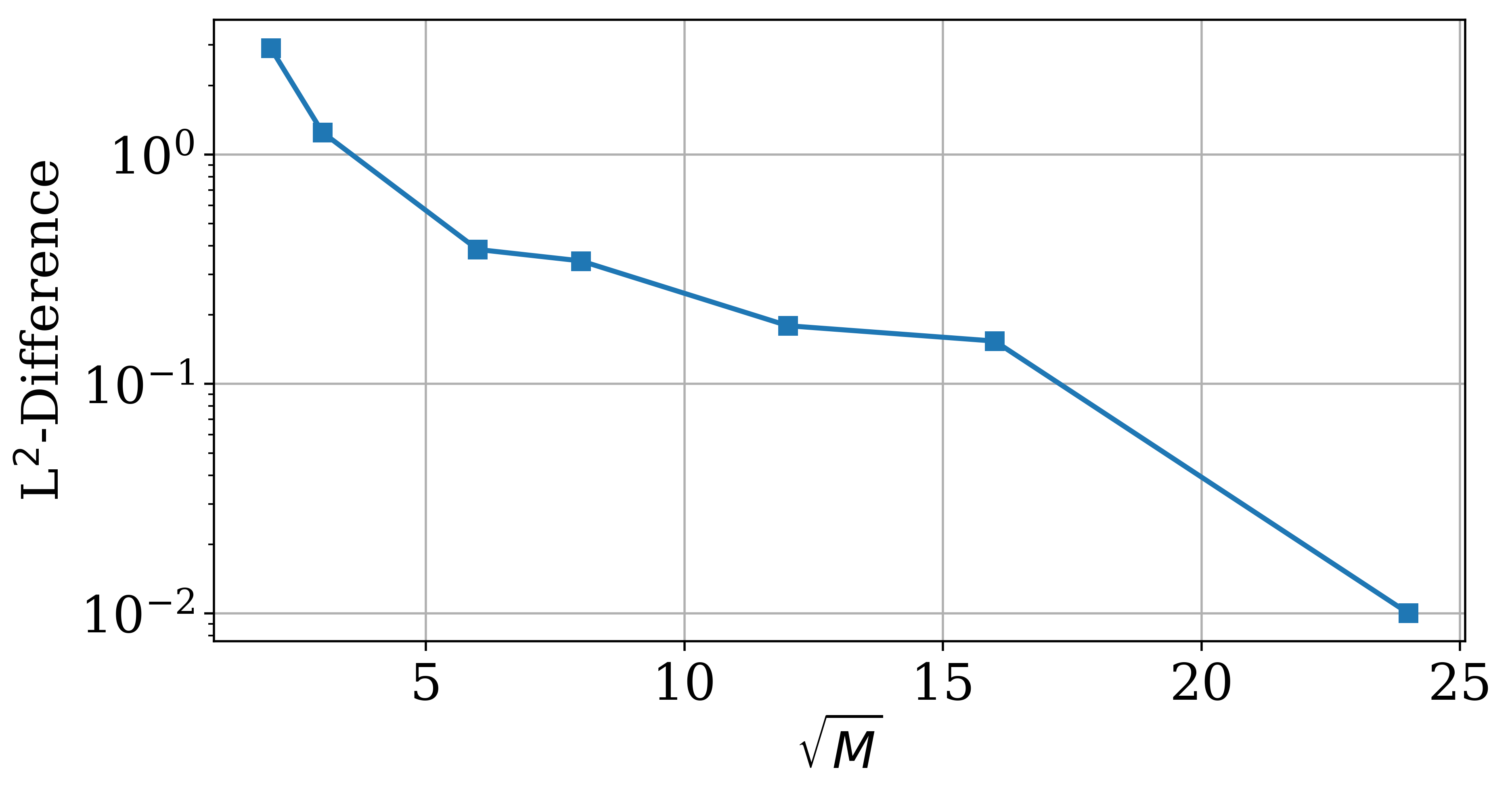}
         \vspace{15pt}
    \end{subfigure}
    \hfill
    \begin{subfigure}[b]{0.42\textwidth}
        \centering
        \begin{subfigure}[b]{0.3\textwidth}
             \centering
             \includegraphics[width=\textwidth]{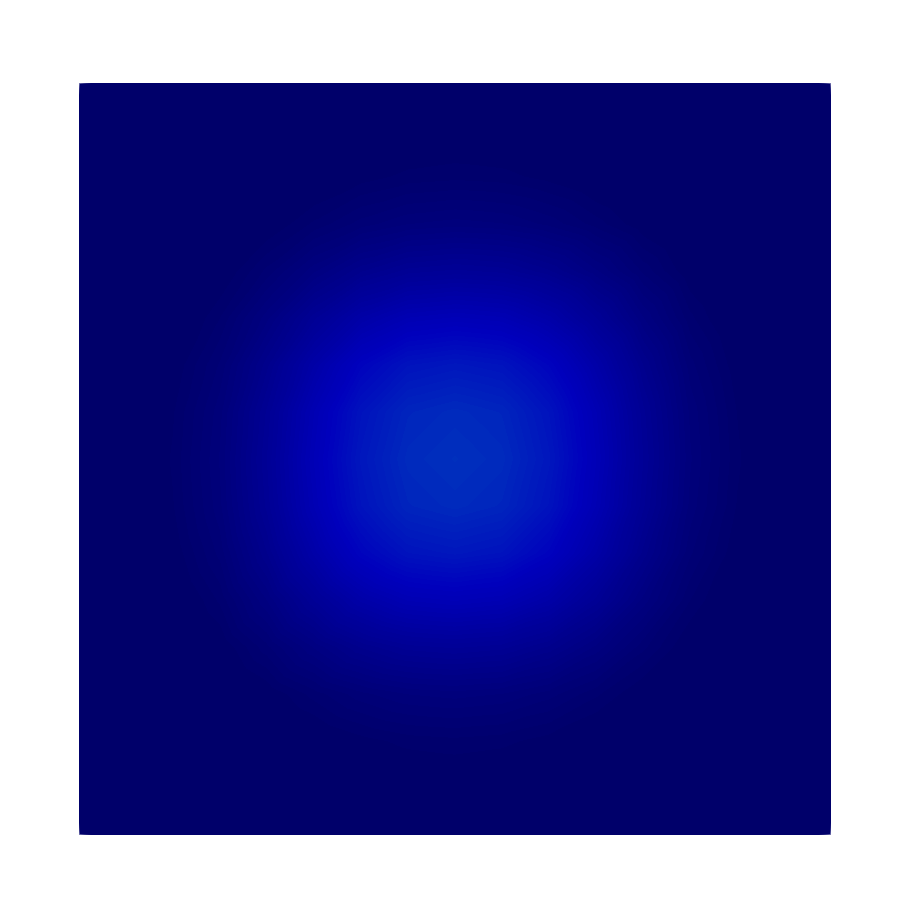}
             \caption{$M=3^2$}
        \end{subfigure}
        \begin{subfigure}[b]{0.3\textwidth}
             \centering
             \includegraphics[width=\textwidth]{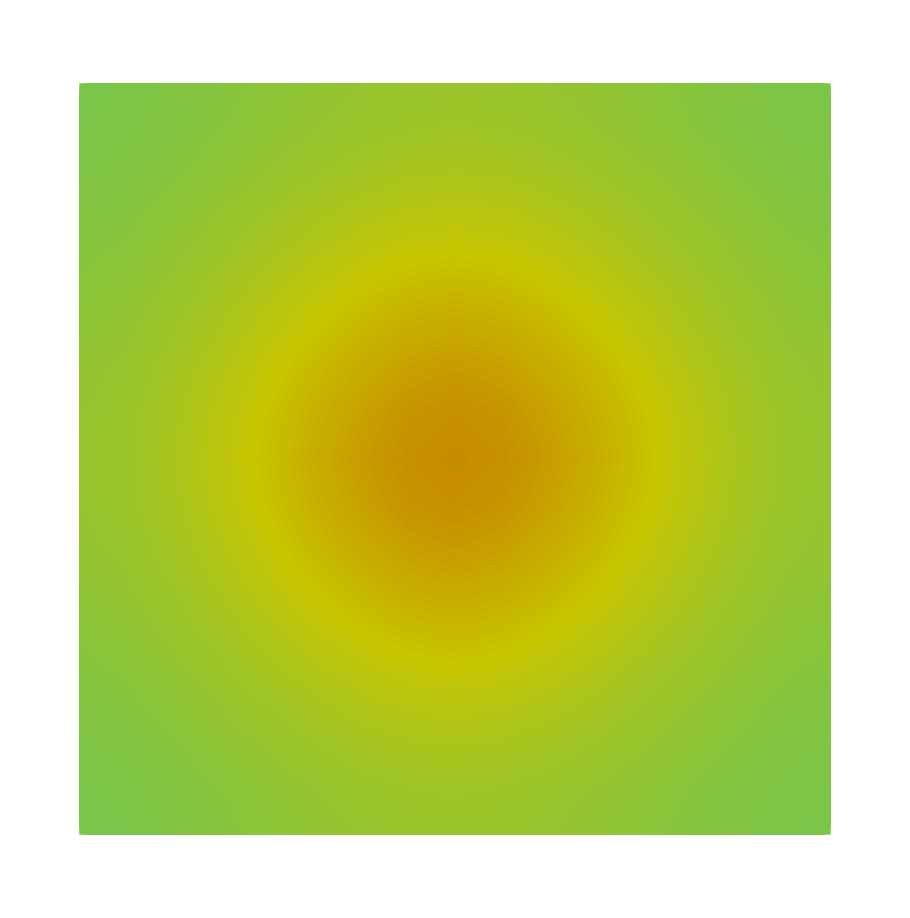}
             \caption{$M=6^2$}
        \end{subfigure}
        \begin{subfigure}[b]{0.3\textwidth}
             \centering
             \includegraphics[width=\textwidth]{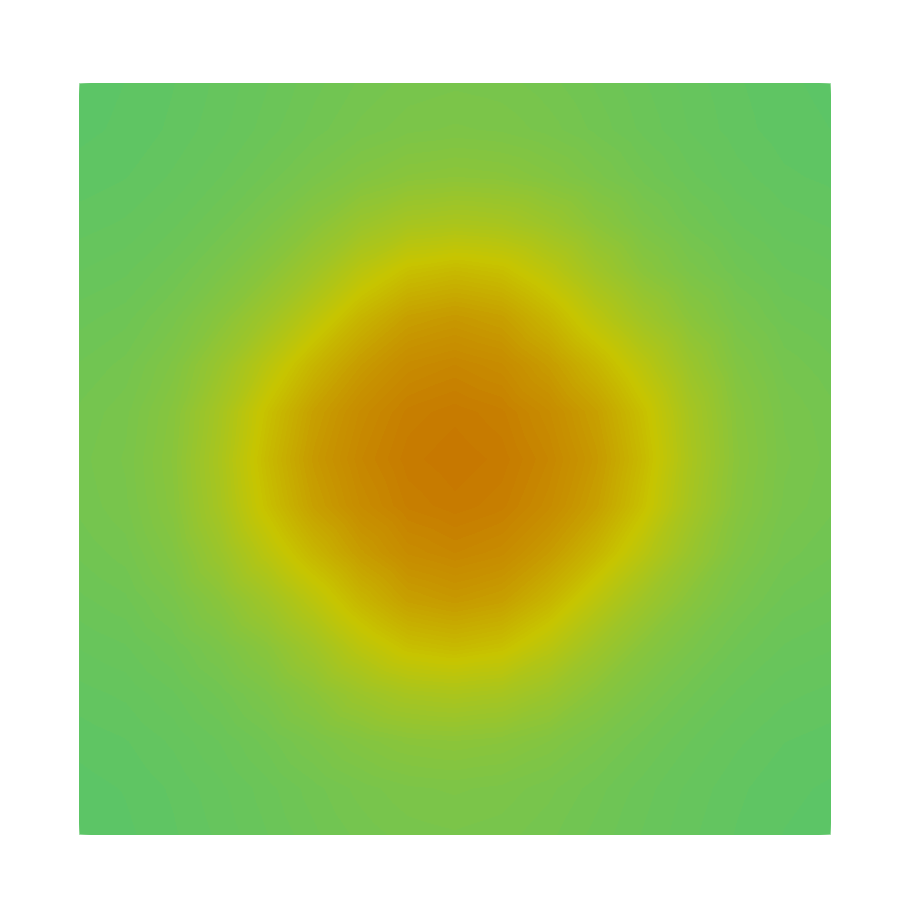}
             \caption{$M=8^2$}
        \end{subfigure}
        \\
        \begin{subfigure}[b]{0.3\textwidth}
             \centering
             \includegraphics[width=\textwidth]{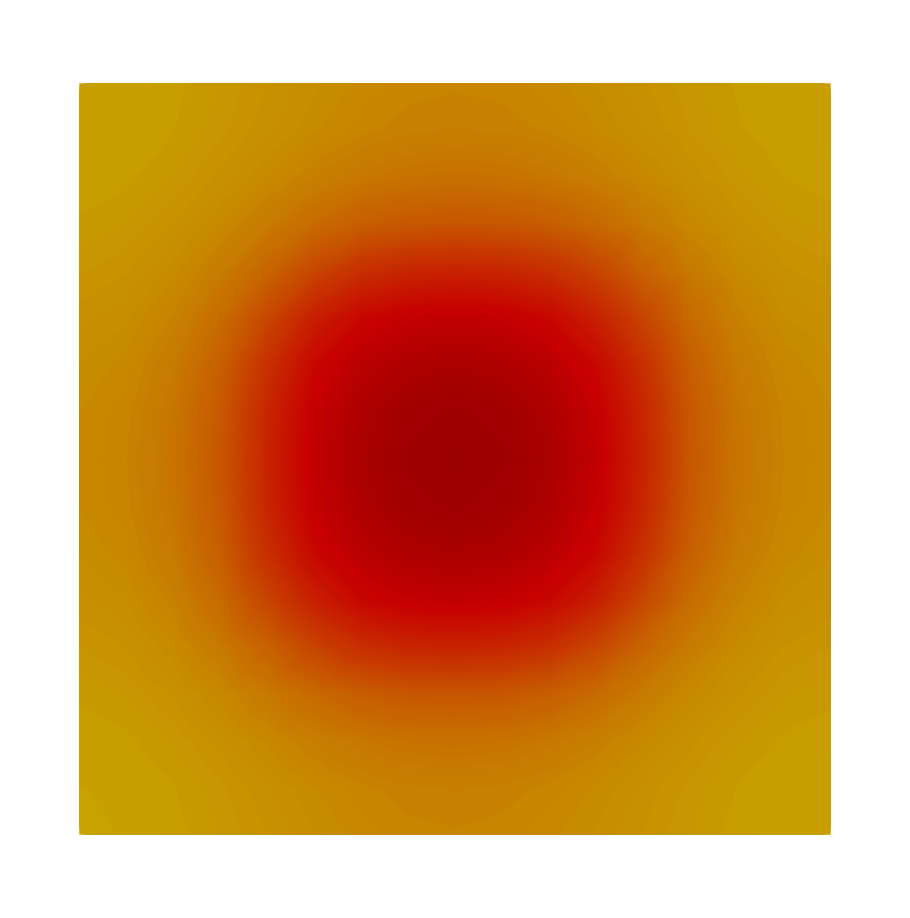}
             \caption{$M=16^2$}
        \end{subfigure}
        \begin{subfigure}[b]{0.3\textwidth}
             \centering
             \includegraphics[width=\textwidth]{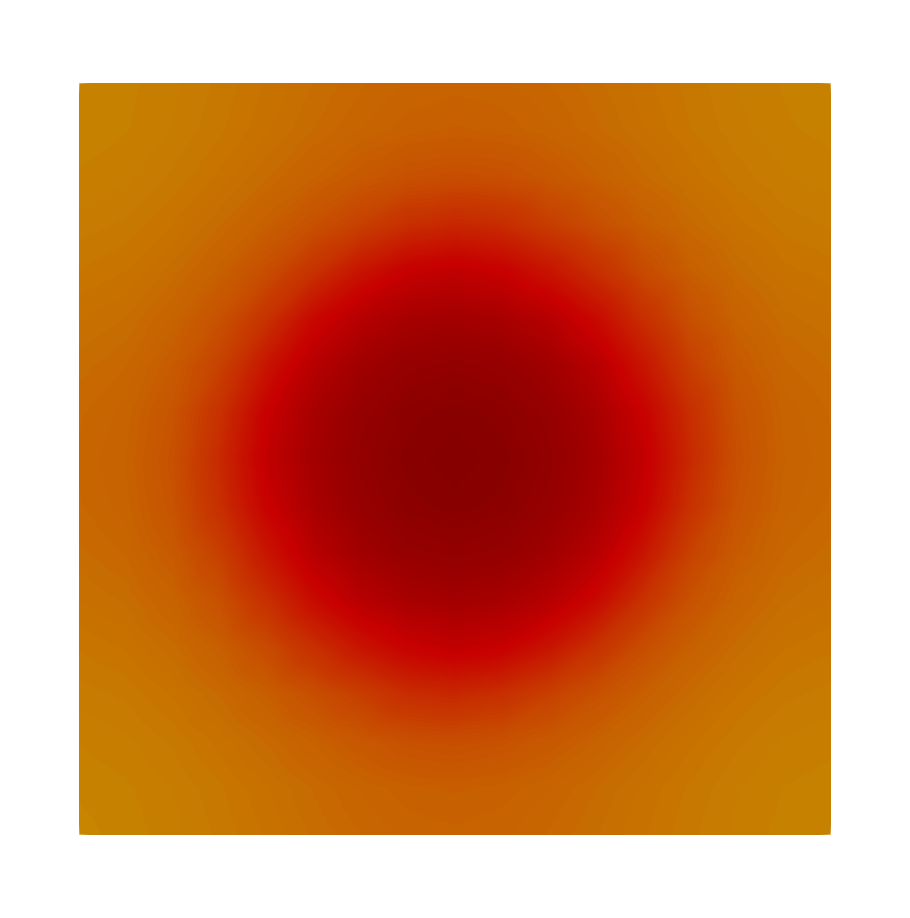}
             \caption{$M=24^2$}
        \end{subfigure}
        \begin{subfigure}[b]{0.3\textwidth}
             \centering
             \includegraphics[width=\textwidth]{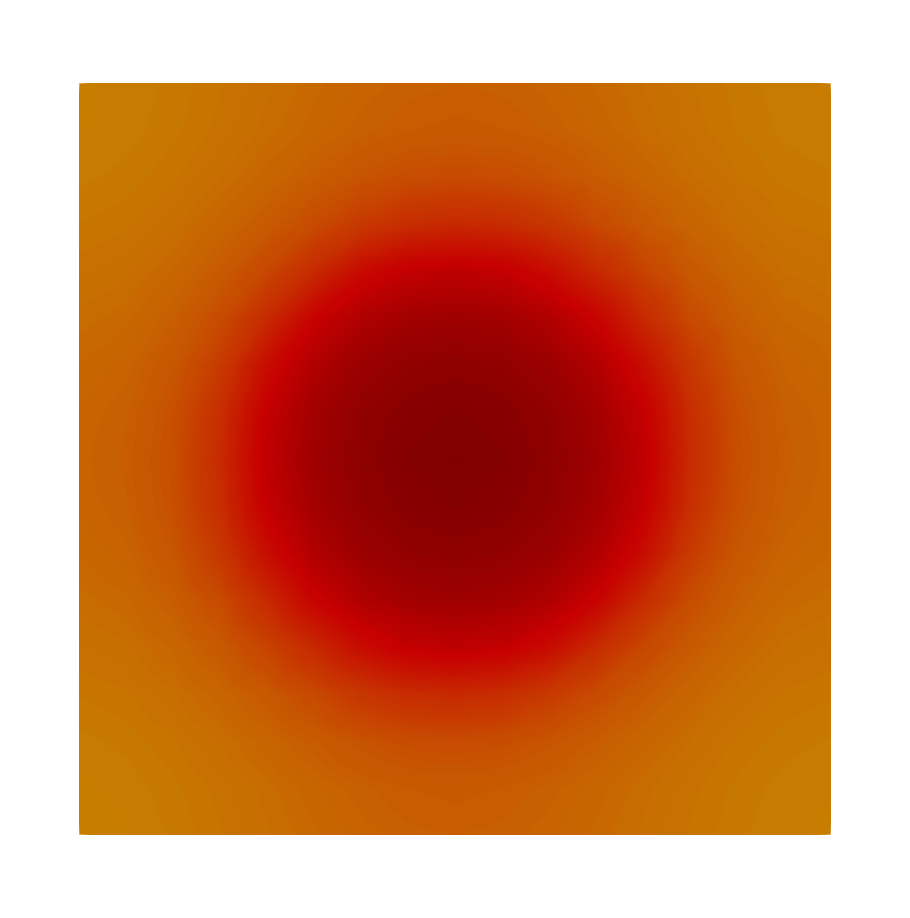}
             \caption{$M=32^2$}
        \end{subfigure}
    \end{subfigure} 
    \hfill
    \begin{subfigure}[b]{0.05\textwidth}
         \hspace{-35pt}
         \includegraphics[width=1.4\textwidth]{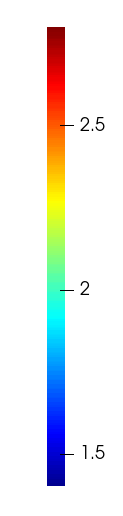}
         \vspace{18pt}
    \end{subfigure}
    \caption{Dependence of the numerical algorithm for model (\ref{eq:hom_system_a}) on the discrete position of the cell problems. On the left is the absolute $L^2$-difference of the temperature regarding the number of cell problems $M$, compared to the case $M=32^2$. On the right, the solution $\theta^f$ is depicted on $\Sigma$ for different $M$.}
    \label{fig:cell_number_influence}
\end{figure}

\textbf{Convergence study for the limit $\varepsilon \to 0$.}
To verify our homogenized temperature models (\ref{eq:hom_system_a}) and (\ref{eq:hom_system_b}), we carry out multiple simulations of the resolved microscale model for different $\varepsilon$ and compare the computed temperature with the effective models. We consider two different setups:
\begin{itemize}
    \item A simulation of Cases (a) and (b) in the previously described three-dimensional domain. Here, for small $\e$, the computational effort, while utilizing a fine-mesh resolution with respect to $\varepsilon$, became too large and could not be handled by our hardware. Therefore, we had to simplify the problem in terms of different aspects. First, we use a constant heat source $f^g=1$, and second, we remove the convection term. Neglecting the convection is acceptable, as we are mainly interested in the comparison of our derived effective temperature model with the resolved micro model, and the convection does not play a dominant role in the homogenization carried out in the previous sections. The effective behavior of flow over rough surfaces has also already been investigated in other studies, cf. \cite{Achdou98}. These modifications lead to a temperature profile that is $\e$--periodic in $x_1$ and $x_2$ and only varies in the vertical direction $x_3$. For simulating the model with resolved microscale we use the smaller domain  $\Omega = [0, 3\varepsilon] \times [0, 3\varepsilon] \times [-1, 1]$. Even in this scaled-down domain, more than 1 million simplices are required to resolve the microstructures. For this reason, we could not carry out more complex simulations for this setup. 
    \item A two-dimensional setup that is less expensive to solve numerically. Therefore, we can resolve the microstructure over the whole domain and also include the convection term. One disadvantage of the two-dimensional setup is that we can only investigate Case (a). Case (b), with connected grains as well as fluid and solid domains in contact, is not possible.
    Here, we set $\Omega=[0, 1]\times[-1, 1]$ and examine circular grains $\GrainCell$ with radius $r=0.4\e$. We keep the heat source $f^g=1$ and the Dirichlet boundary condition at the top boundary. For the fluid flow, we use the inflow $u_{in}=(0, -1)$ and free outflow at $x_1=0$ and $x_1=1$. 
\end{itemize}

\begin{figure}[H]
	\centering
	\includegraphics[width=0.8\linewidth]{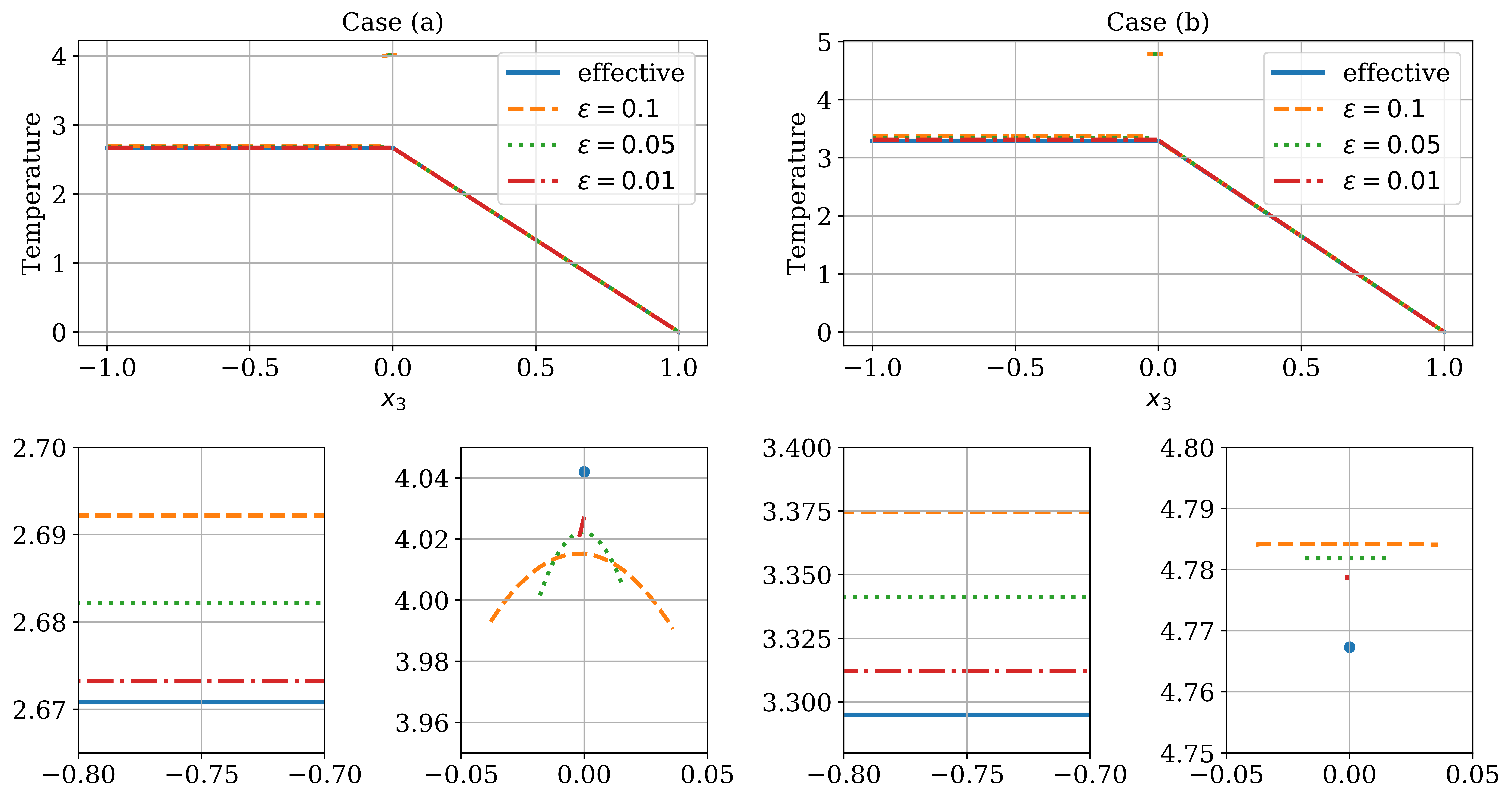}
	\caption{The fully resolved and effective temperature profile for both cases is depicted over the vertical line at $x_1=x_2=3\e/2$. On the left, Case (a), and on the right, Case (b). The top row shows the macroscopic temperature profile. At the bottom, two zoomed-in sections inside the solid domain $\Omega^s$ and around the interface grains show the difference between the solutions in more detail.}
	\label{fig:temperature_profile}
\end{figure}

The comparison for the three-dimensional case is shown in Figure  \ref{fig:temperature_profile}. The profile is plotted over a vertical line that goes through the center of the micro grains. Note that for Case (a), we cannot plot $\theta^g$ directly, since it also depends on $y$. In the zoomed-in segment, we therefore show the averaged temperature $\frac{1}{|\GrainCell|}\int_{\GrainCell}\theta^g$.
We observe that the difference between the effective model and the resolved micro model is small. In the resolved microscale model, one obtains slightly higher temperatures inside $\Omega^s$. Additionally, the effective model is able to capture the temperature values inside the grain domains, as shown in the zoomed-in plots. A convergence trend for $\varepsilon \to 0$ to the homogenized model also appears to be visible. But for the comparison, one has to also keep in mind the 
achieved numerical accuracy shown in Figure  \ref{fig:convergence_for_mesh_resolution} in the Appendix.

\begin{figure}[H]
	\centering
	\begin{subfigure}[b]{0.25\textwidth}
		\centering
		\includegraphics[width=\textwidth]{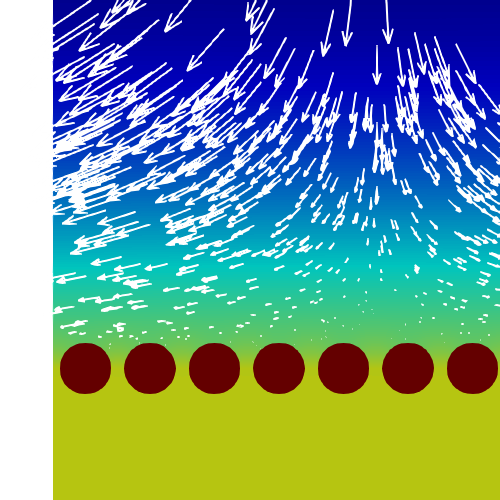}
		\caption{$\e=0.1$}
	\end{subfigure}
	\hfill
	\begin{subfigure}[b]{0.25\textwidth}
		\centering
		\includegraphics[width=\textwidth]{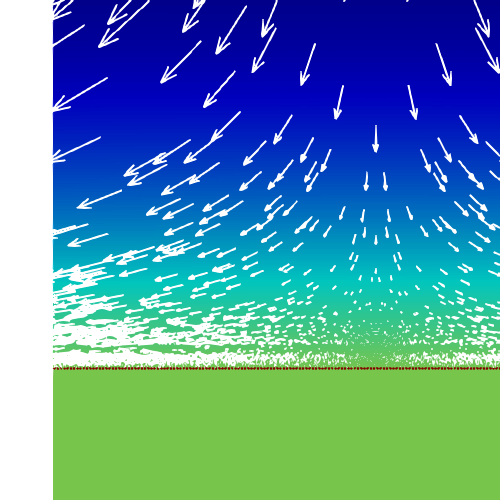}
		\caption{$\e=0.005$}
	\end{subfigure} 
	\hfill
	\begin{subfigure}[b]{0.25\textwidth}
		\centering
		\includegraphics[width=\textwidth]{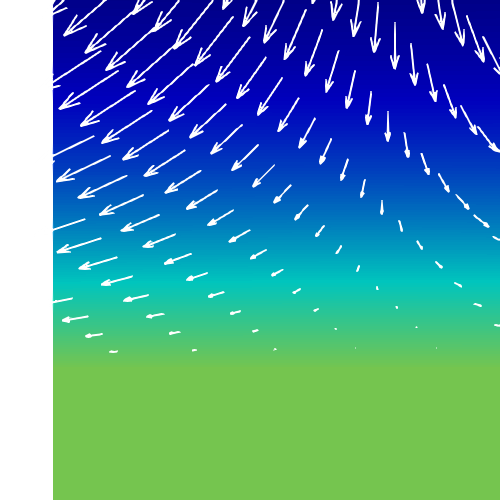}
		\caption{Effective solution}
	\end{subfigure} 
	\hfill
	\begin{subfigure}[b]{0.05\textwidth}
		\hspace{-10pt}
		\includegraphics[width=\textwidth]{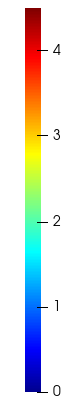}
	\end{subfigure}
	\caption{The solution for the two-dimensional setup. Visible are the fluid flow and the two results for different $\e$ values as well as the effective fluid and solid temperature. For better readability, we only show the zoomed-in section $[0, 0.7] \times [-0.2, 0.55]$.}
	\label{fig:temperature_flow_2d}
\end{figure}

\begin{figure}[H]
	\centering
	\begin{subfigure}[b]{0.43\textwidth}
		\centering
		\includegraphics[width=\textwidth]{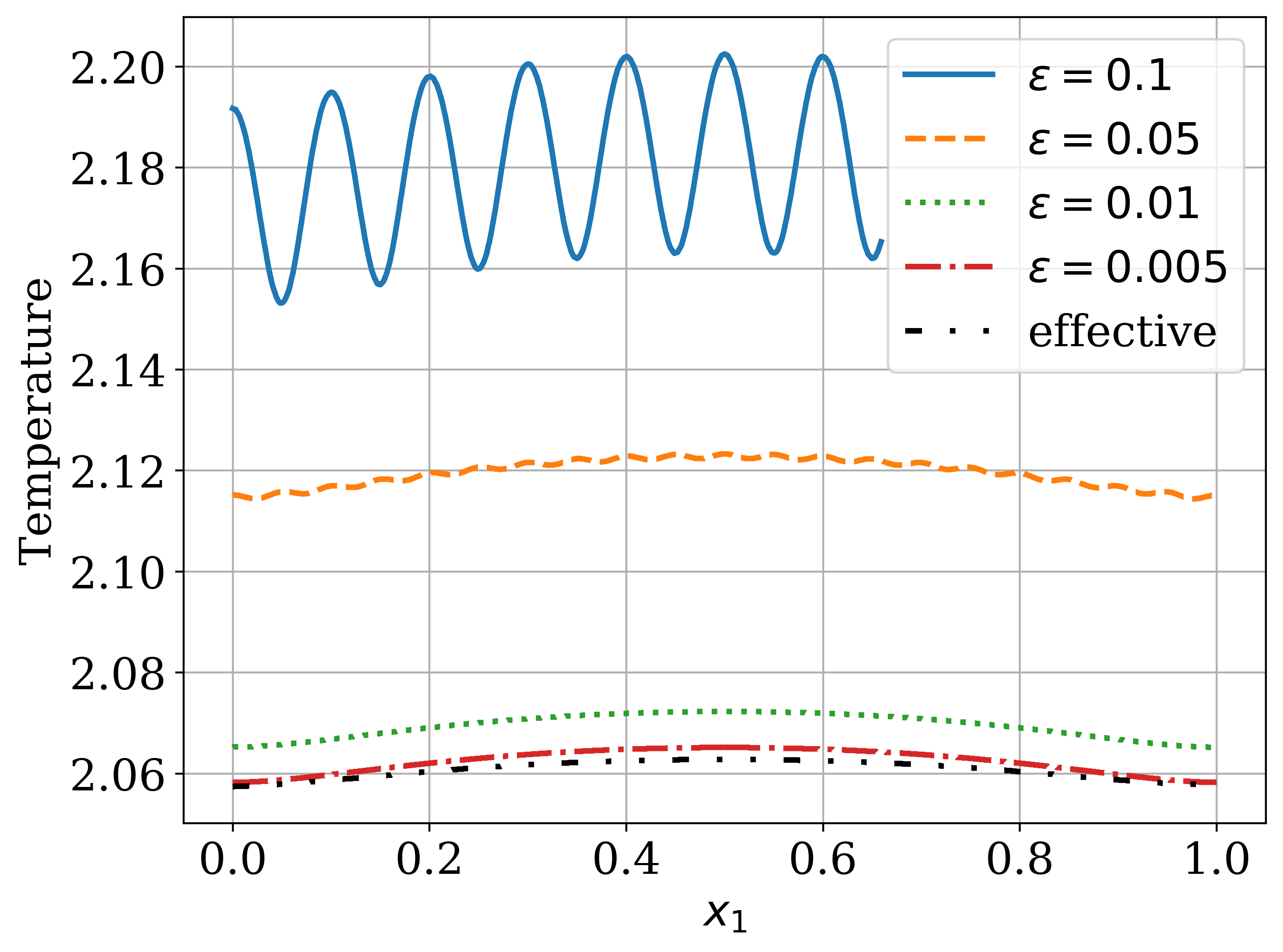}
	\end{subfigure}
	\hfill
	\begin{subfigure}[b]{0.28\textwidth}
		\centering
		\begin{subfigure}[b]{0.45\textwidth}
			\centering
			\includegraphics[width=\textwidth]{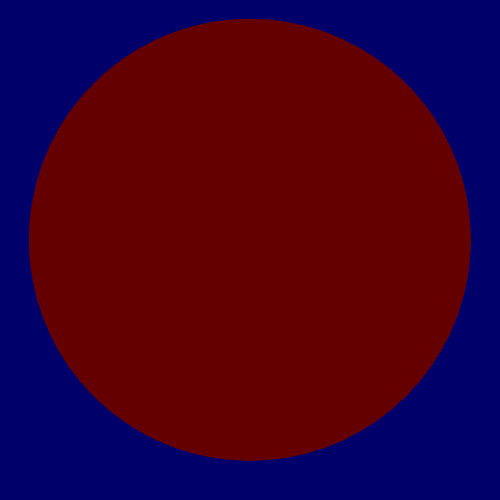}
			\caption{$\e=0.1$}
		\end{subfigure}
		\begin{subfigure}[b]{0.45\textwidth}
			\centering
			\includegraphics[width=\textwidth]{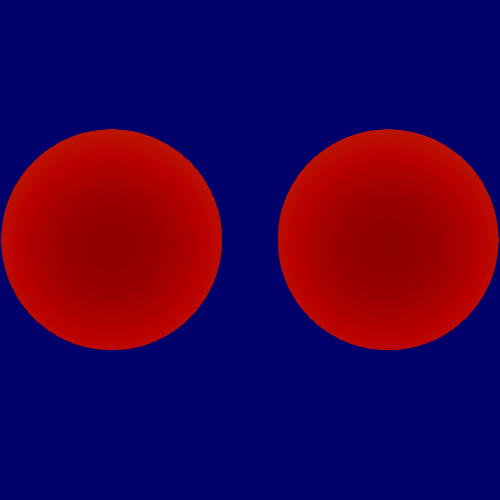}
			\caption{$\e=0.05$}
		\end{subfigure}
		\\
		\begin{subfigure}[b]{0.45\textwidth}
			\centering
			\includegraphics[width=\textwidth]{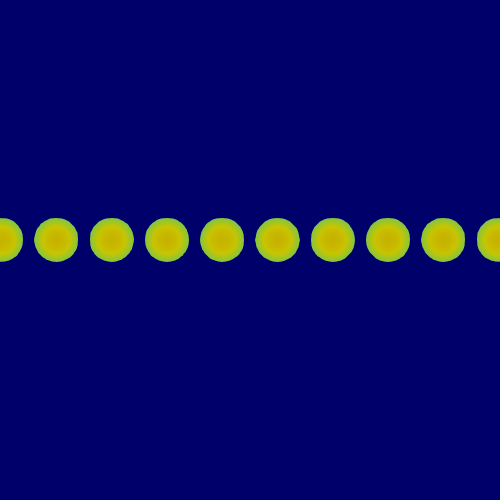}
			\caption{$\e=0.01$}
		\end{subfigure}
		\begin{subfigure}[b]{0.45\textwidth}
			\centering
			\includegraphics[width=\textwidth]{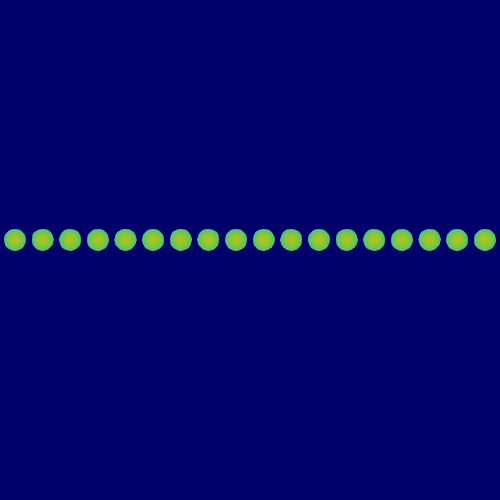}
			\caption{$\e=0.005$}
		\end{subfigure}
	\end{subfigure} 
	\begin{subfigure}[b]{0.15\textwidth}
		\centering
		\begin{subfigure}[b]{0.95\textwidth}
			\centering
			\includegraphics[width=\textwidth]{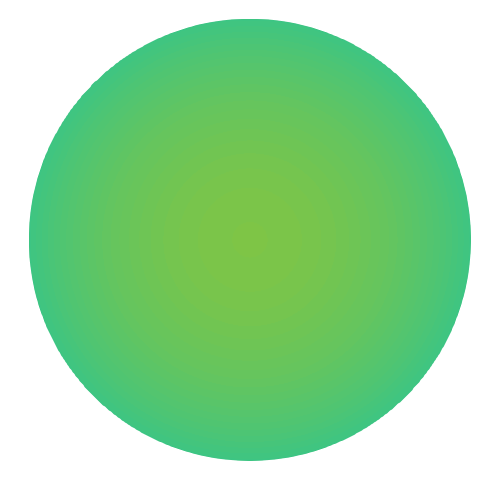}
			\caption{Effective cell temperature}
		\end{subfigure}
		\vspace{35pt}
	\end{subfigure} 
	\hfill
	\begin{subfigure}[b]{0.05\textwidth}
		\hspace{-21pt}
		\includegraphics[width=1.3\textwidth]{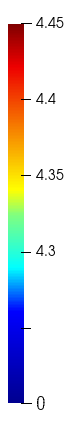}
	\end{subfigure}
	\caption{Convergence study for the two-dimensional setup. Left: solutions along a horizontal line at $x_2=0.05$, for different $\e$. Right: a zoomed-in section of the cell temperature at the left boundary. Note the different temperature scale, compared to Figure  \ref{fig:temperature_flow_2d}a--d show the solution of the resolved microscale case and (e) the effective temperature $\theta^g(\Tilde{x}, y)$ at the point $\Tilde{x}=(0,0)$.}
	\label{fig:temperature_profile_2d}
\end{figure}

The simulation results for the two-dimensional case are shown in Figures  \ref{fig:temperature_flow_2d},\ref{fig:temperature_profile_2d}. The flow field and macroscopic temperature are demonstrated in Figure \ref{fig:temperature_flow_2d}.
We observe that the system becomes cooler for smaller $\e$ since the fluid is slowed down less by the grains, and the cooling due to convection becomes stronger.
Given the underlying flow field, we obtain a solution that is symmetrical around the center of the domain.
Similar to the results above, in Figure \ref{fig:temperature_profile_2d} we can again observe the convergence of the microscale solution to the effective solution. Besides the behavior of the macroscopic solution, the right side of Figure \ref{fig:temperature_profile_2d} also demonstrates the grain temperature.
Here, the temperature scale is different than in Figure \ref{fig:temperature_flow_2d} to better investigate the grain temperature.
The grains also become slightly cooler for smaller $\e$. In the center of the grains, the temperature is higher since the heat source is inside the grains. This aspect is also captured in the effective model. 

\section{Conclusion}
We studied the effective influence of grain structures located on an interface between a fluid and a solid by the use of two-scale convergence for thin domains.
Two distinct scenarios were considered: Case (a) with disconnected grains and Case (b) with a connected grain structure.
For Case (a), we derived an effective two-scale model with microstructures at the interface.
In Case (b), we obtained, next to the temperature of fluid and solid, an effective interface temperature for the grains in the homogenized model.
The homogenization results were verified by direct comparison with the microscale model with the help of numerical simulations.
To this end, we considered an iterative algorithm to realize the coupling of grains and macro temperature. We showed numerically that the iteration speed can be improved by utilizing a relaxation scheme.
The numerical results support the derived effective model and demonstrate that it can accurately capture the behavior inside the grains.

For further studies, it would be interesting to investigate the influence of the Assumption \ref{item:A1}, where a specific $\e$-scaling of the heat conductivity inside grains was chosen.
Additionally, the assumption for the trace estimate could be weakened such that the derived model could be applied to general Lipschitz domains.

Regarding our motivating application of the grinding process, we could determine a first effective model that could be used to include the grains in further simulations. Of course, we are currently assuming periodic abrasive grains, which does not reflect reality.
However, with the help of numerical approaches, the derived models could be transferred to the more general non-periodic case. In addition, the interaction with the workpiece has been disregarded but plays an important role.
Of particular interest here would be the homogenization of the grinding gap, for which a flow equation must also be considered.
The extension employing numerical methods and the consideration of the grinding gap will be investigated in further research.
\section*{Use of AI tools declaration}
The authors declare that they have not used Artificial Intelligence (AI) tools in the creation of this article.

\section*{Acknowledgements}
The authors are grateful to the anonymous reviewers for their constructive feedback and helpful suggestions which significantly improved the initial manuscript.

TF acknowledges funding by the Deutsche Forschungsgemeinschaft (DFG, German Research Foundation) -- project nr.  
281474342/GRK2224/2. Additionally, this research was also partially funded by the DFG -- project nr. 439916647.

The research activity of ME is funded by the European Union’s Horizon 2022 research and innovation program under the Marie Skłodowska-Curie fellowship project {\textit{MATT}} (\url{https://doi.org/10.3030/101061956}).
\section*{Conflict of interest}

The authors declare that there is no conflict of interest.
\bibliographystyle{abbrv}
\bibliography{MainArxiv.bbl}

\begin{thebibliography}{10}

\bibitem{acerbi92}
E.~Acerbi, V.~Piat, G.~Maso, and D.~Percivale.
\newblock An extension theorem from connected sets, and homogenization in
  general periodic domains.
\newblock {\em Nonlinear Analysis-theory Methods \& Applications}, 18:481--496,
  03 1992.

\bibitem{Achdou98}
Y.~Achdou, O.~Pironneau, and F.~Valentin.
\newblock Effective boundary conditions for laminar flows over periodic rough
  boundaries.
\newblock {\em Journal of Computational Physics}, 147(1):187--218, 1998.

\bibitem{ahmed22}
E.~N. Ahmed, A.~Bottaro, and G.~Tanda.
\newblock A homogenization approach for buoyancy-induced flows over
  micro-textured vertical surfaces.
\newblock {\em Journal of Fluid Mechanics}, 941:A53, 2022.

\bibitem{Allaire91}
G.~Allaire.
\newblock Homogenization of the {N}avier-{S}tokes equations in open sets
  perforated with tiny holes ii: Non-critical sizes of the holes for a volume
  distribution and a surface distribution of holes.
\newblock {\em Archive for Rational Mechanics and Analysis}, 113(3):261–298,
  1991.

\bibitem{Allaire92}
G.~Allaire.
\newblock Homogenization and two scale convergence.
\newblock {\em {SIAM} J. Math. Anal.}, 23(6):1482--1518, 1992.

\bibitem{Fenics}
M.~S. Alnaes, J.~Blechta, J.~Hake, A.~Johansson, B.~Kehlet, A.~Logg,
  C.~Richardson, J.~Ring, M.~E. Rognes, and G.~N. Wells.
\newblock The {FEniCS} project version 1.5.
\newblock {\em Arch. Num. Soft.}, 3, 2015.

\bibitem{Amirat06}
Y.~Amirat, G.~A. Chechkin, and R.~R. Gadyl'shin.
\newblock Asymptotics of simple eigenvalues and eigenfunctions for the
  {L}aplace operator in a domain with an oscillating boundary.
\newblock {\em Computational Mathematics and Mathematical Physics},
  46(1):97--110, Jan 2006.

\bibitem{Arendt03}
W.~Arendt, G.~Metafune, D.~Pallara, and S.~Romanell.
\newblock The {L}aplacian with {W}entzell-{R}obin boundary conditions on spaces
  of continuous functions.
\newblock {\em Semigroup Forum}, 67(2):247--261, Aug 2003.

\bibitem{Bendali96}
A.~Bendali and K.~Lemrabet.
\newblock The effect of a thin coating on the scattering of a time-harmonic
  wave for the helmholtz equation.
\newblock {\em SIAM Journal on Applied Mathematics}, 56(6):1664--1693, 1996.

\bibitem{Bhattacharya2022}
A.~Bhattacharya, M.~Gahn, and M.~Neuss-Radu.
\newblock Effective transmission conditions for reaction–diffusion processes
  in domains separated by thin channels.
\newblock {\em Applicable Analysis}, 101(6):1896--1910, 2022.

\bibitem{Bhavnani1991}
S.~H. Bhavnani and A.~E. Bergles.
\newblock Natural convection heat transfer from sinusoidal wavy surfaces.
\newblock {\em W{\"a}rme - und Stoff{\"u}bertragung}, 26(6):341--349, Nov 1991.

\bibitem{Bonnailllie10}
V.~Bonnaillie-No\"{e}l, M.~Dambrine, F.~H\'{e}rau, and G.~Vial.
\newblock On generalized {V}entcel's type boundary conditions for {L}aplace
  operator in a bounded domain.
\newblock {\em SIAM Journal on Mathematical Analysis}, 42(2):931--945, 2010.

\bibitem{BrooksS82}
A.~N. Brooks and T.~J. Hughes.
\newblock Streamline upwind/petrov-galerkin formulations for convection
  dominated flows with particular emphasis on the incompressible
  {N}avier-{S}tokes equations.
\newblock {\em Computer Methods in Applied Mechanics and Engineering},
  32(1):199--259, 1982.

\bibitem{Chen20}
H.~Chen, J.~Zhao, Y.~Dai, Z.~Wang, and T.~Yu.
\newblock Simulation of 3d grinding temperature field by using an improved
  finite difference method.
\newblock {\em The International Journal of Advanced Manufacturing Technology},
  108(11):3871--3884, Jun 2020.

\bibitem{Cioranescu01}
D.~Cioranescu, J.~Paulin, and J.~Telega.
\newblock Homogenization of reticulated structures.
\newblock {\em Applied Mechanics Reviews - APPL MECH REV}, 54, 07 2001.

\bibitem{Conca88}
C.~Conca.
\newblock The {S}tokes sieve problem.
\newblock {\em Communications in Applied Numerical Methods}, 4(1):113–121,
  1988.

\bibitem{Donato19}
P.~Donato, E.~Jose, and D.~Onofrei.
\newblock Asymptotic analysis of a multiscale parabolic problem with a rough
  fast oscillating interface.
\newblock {\em Archive of Applied Mechanics}, 89:1--29, 03 2019.

\bibitem{Donato10}
P.~Donato, A.~Piatnitski, and P.~Lebedev.
\newblock On the effective interfacial resistance through rough surfaces.
\newblock {\em Communications on Pure and Applied Analysis}, 9, 09 2010.

\bibitem{Eden2022effective}
M.~Eden and T.~Freudenberg.
\newblock Effective heat transfer between a porous medium and a fluid layer:
  Homogenization and simulation.
\newblock {\em Multiscale Modeling \& Simulation}, page to appear, (2024), to
  appear.

\bibitem{Eden22}
M.~Eden and H.~S. Mahato.
\newblock Homogenization of a poroelasticity model for fiber-reinforced
  hydrogels.
\newblock {\em Math. Methods Appl. Sci.}, 45(17):11562--11580, 2022.

\bibitem{Gahn18_FEM}
M.~Gahn and E.~Baensch.
\newblock A mixed finite-element method for elliptic operators with {W}entzell
  boundary condition.
\newblock {\em IMA Journal of Numerical Analysis}, 40, 10 2018.

\bibitem{Gahn21}
M.~Gahn and M.~Neuss-Radu.
\newblock Singular limit for reactive diffusive transport through an array of
  thin channels in case of critical diffusivity.
\newblock {\em Multiscale Modeling \& Simulation}, 19(4):1573--1600, 2021.

\bibitem{Gahn17}
M.~Gahn, M.~Neuss-Radu, and P.~Knabner.
\newblock Derivation of effective transmission conditions for domains separated
  by a membrane for different scaling of membrane diffusivity.
\newblock {\em Discrete and Continuous Dynamical Systems - Series S},
  10:773--797, 04 2017.

\bibitem{Gahn18}
M.~Gahn, M.~Neuss-Radu, and P.~Knabner.
\newblock Effective interface conditions for processes through thin
  heterogeneous layers with nonlinear transmission at the microscopic
  bulk-layer interface.
\newblock {\em Networks and Heterogeneous Media}, 13(4):609--640, 2018.

\bibitem{gmsh}
C.~Geuzaine and J.-F. Remacle.
\newblock Gmsh: A 3-d finite element mesh generator with built-in pre- and
  post-processing facilities.
\newblock {\em Int. J. Numer. Methods. Eng.}, 79:1309 -- 1331, 09 2009.

\bibitem{GU04}
R.~Gu, M.~Shillor, G.~Barber, and T.~Jen.
\newblock Thermal analysis of the grinding process.
\newblock {\em Mathematical and Computer Modelling}, 39(9):991--1003, 2004.

\bibitem{hopker2016}
M.~H\"{o}pker.
\newblock {\em Extension Operators for Sobolev Spaces on Periodic Domains,
  Their Applications, and Homogenization of a Phase Field Model for Phase
  Transitions in Porous Media}.
\newblock PhD thesis, Universität Bremen, 07 2016.

\bibitem{EB14}
M.~H\"{o}pker and M.~B\"{o}hm.
\newblock A note on the existence of extension operators for {S}obolev spaces
  on periodic domains.
\newblock {\em C. R. Math. Acad. Sci. Paris}, 352(10):807--810, 2014.

\bibitem{INTROINI11}
C.~Introïni, M.~Quintard, and F.~Duval.
\newblock Effective surface modeling for momentum and heat transfer over rough
  surfaces: Application to a natural convection problem.
\newblock {\em International Journal of Heat and Mass Transfer},
  54(15):3622--3641, 2011.

\bibitem{NAGAOKA90}
H.~Nagaoka and S.~Ohgaki.
\newblock Mass transfer mechanism in a porous riverbed.
\newblock {\em Water Research}, 24(4):417--425, 1990.

\bibitem{Neuss07}
M.~Neuss-Radu and W.~J\"{a}ger.
\newblock Effective transmission conditions for reaction-diffusion processes in
  domains separated by an interface.
\newblock {\em SIAM Journal on Mathematical Analysis}, 39(3):687--720, 2007.

\bibitem{Nevard97}
J.~Nevard and J.~B. Keller.
\newblock Homogenization of rough boundaries and interfaces.
\newblock {\em SIAM Journal on Applied Mathematics}, 57(6):1660--1686, 1997.

\bibitem{Owen63}
P.~R. Owen and W.~R. Thomson.
\newblock Heat transfer across rough surfaces.
\newblock {\em Journal of Fluid Mechanics}, 15(3):321–334, 1963.

\bibitem{Polyak1964}
B.~Polyak.
\newblock Some methods of speeding up the convergence of iteration methods.
\newblock {\em USSR Computational Mathematics and Mathematical Physics},
  4(5):1--17, 1964.

\bibitem{Riviere2008}
B.~Rivière.
\newblock {\em Discontinuous Galerkin Methods for Solving Elliptic and
  Parabolic Equations}.
\newblock Society for Industrial and Applied Mathematics, 2008.

\bibitem{Rowe03}
W.~Rowe, M.~Morgan, A.~Batako, and T.~Jin.
\newblock Energy and temperature analysis in grinding.
\newblock {\em Laser Metrology and Machine Performance VI}, pages 3--23, 01
  2003.

\bibitem{SHOWALTER04}
R.~Showalter and D.~Visarraga.
\newblock Double-diffusion models from a highly-heterogeneous medium.
\newblock {\em Journal of Mathematical Analysis and Applications},
  295(1):191--210, 2004.

\bibitem{Showalter2014-pm}
R.~E. Showalter.
\newblock {\em Monotone operators in Banach space and nonlinear partial
  differential equations}.
\newblock Mathematical Surveys and Monographs. American Mathematical Society,
  Providence, RI, June 2014.

\bibitem{Simon1986}
J.~Simon.
\newblock Compact sets in the space lp(o,t; b).
\newblock {\em Annali di Matematica Pura ed Applicata}, 146(1):65--96, Dec
  1986.

\bibitem{TAYLOR197373}
C.~Taylor and P.~Hood.
\newblock A numerical solution of the {N}avier-{S}tokes equations using the
  finite element technique.
\newblock {\em Comput. \& Fluids}, 1(1):73--100, 1973.

\bibitem{Ting19}
K.~Ting, A.~K. Mozumder, and P.~K. Das.
\newblock {Effect of surface roughness on heat transfer and entropy generation
  of mixed convection in nanofluid}.
\newblock {\em Physics of Fluids}, 31(9):093602, 09 2019.

\bibitem{UEDA93}
T.~Ueda, H.~Tanaka, A.~Torii, and T.~Matsuo.
\newblock Measurement of grinding temperature of active grains using infrared
  radiation pyrometer with optical fiber.
\newblock {\em CIRP Annals}, 42(1):405--408, 1993.

\bibitem{Wiesener22}
F.~Wiesener, B.~Bergmann, M.~Wichmann, M.~Eden, T.~Freudenberg, and A.~Schmidt.
\newblock Modeling of heat transfer in tool grinding for multiscale
  simulations.
\newblock {\em Procedia {CIRP}}, 117:269--274, 2023.

\bibitem{YANG19}
J.~Yang, Z.~Wang, O.~Adetoro, P.~Wen, and C.~Bailey.
\newblock The thermal analysis of cutting/grinding processes by meshless finite
  block method.
\newblock {\em Engineering Analysis with Boundary Elements}, 100:68--79, 2019.
\newblock Improved Localized and Hybrid Meshless Methods - Part 1.

\bibitem{Yousaf15}
M.~Yousaf and S.~Usman.
\newblock Role of surface roughness during natural convection.
\newblock {\em World Journal of Engineering and Technology}, 03:140--148, 01
  2015.

\end{thebibliography}

\renewcommand{\thefigure}{A\arabic{figure}}
\setcounter{figure}{0}
\begin{appendices}
\section{Convergence study with respect to mesh resolution}
\begin{figure}[H]
    \centering
     \begin{subfigure}[b]{0.40\textwidth}
         \centering
         \includegraphics[width=\textwidth]{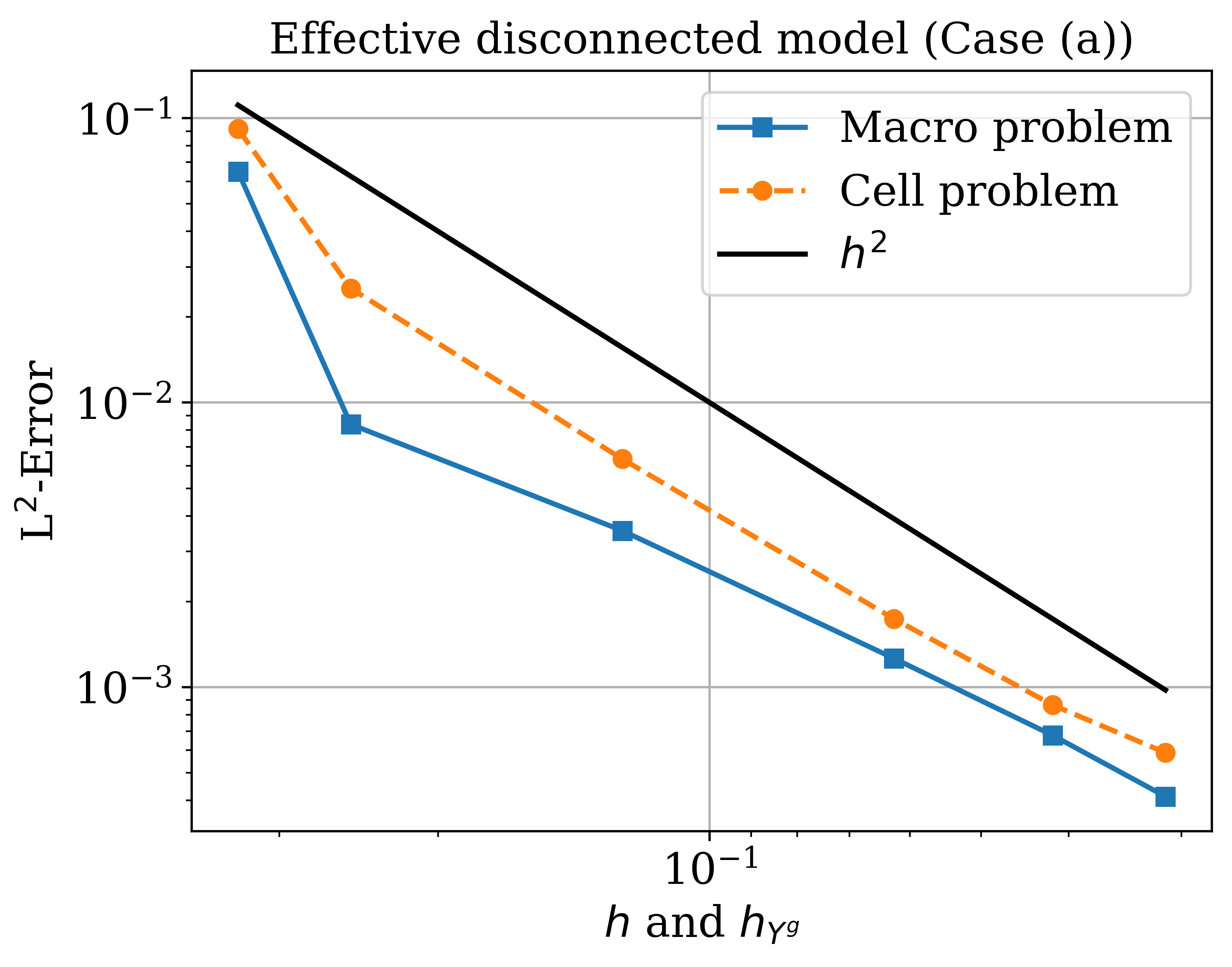}
     \end{subfigure}
     \hfill
     \begin{subfigure}[b]{0.40\textwidth}
         \centering
         \includegraphics[width=\textwidth]{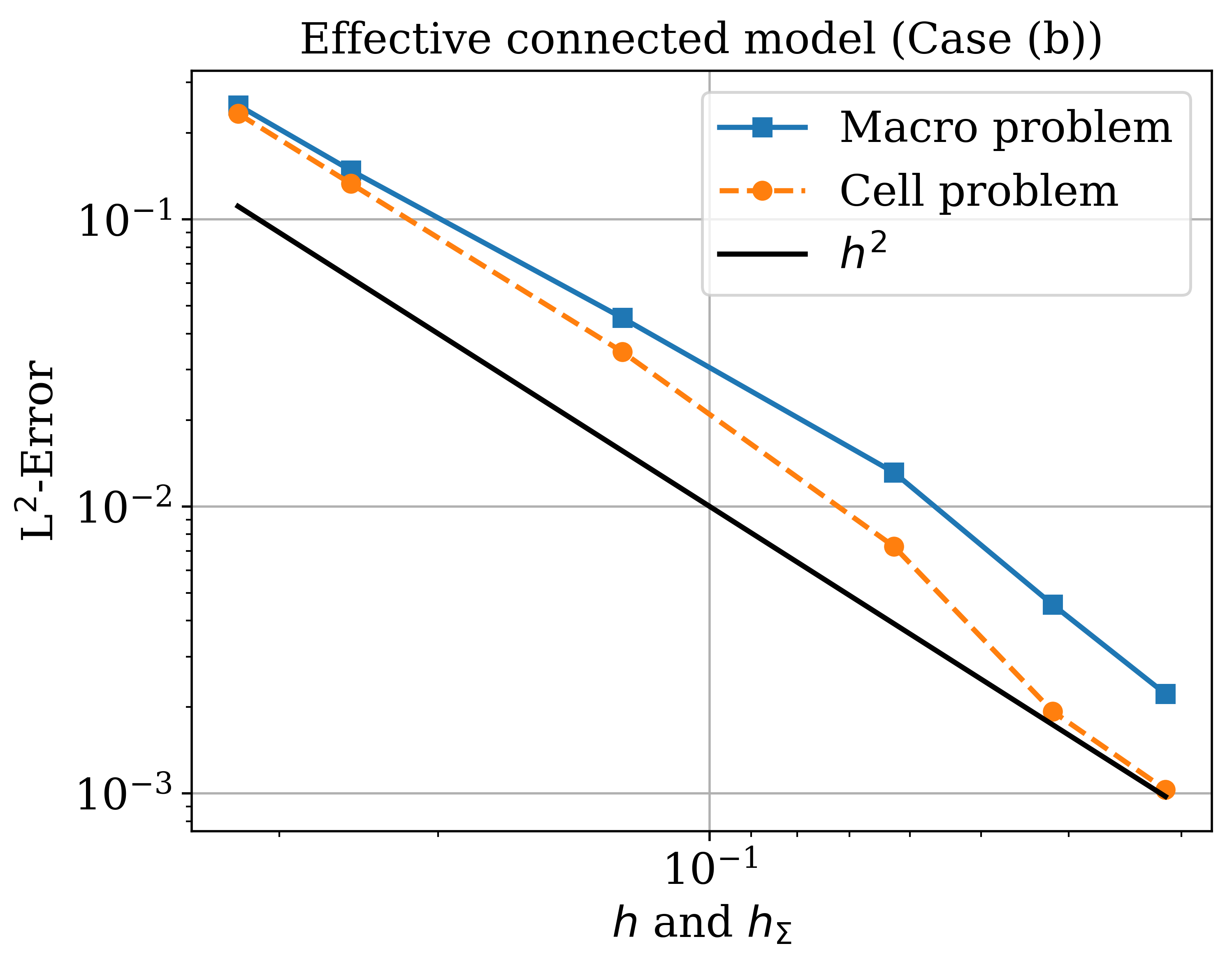}
     \end{subfigure}
     \\
     \begin{subfigure}[b]{0.40\textwidth}
         \centering
         \includegraphics[width=\textwidth]{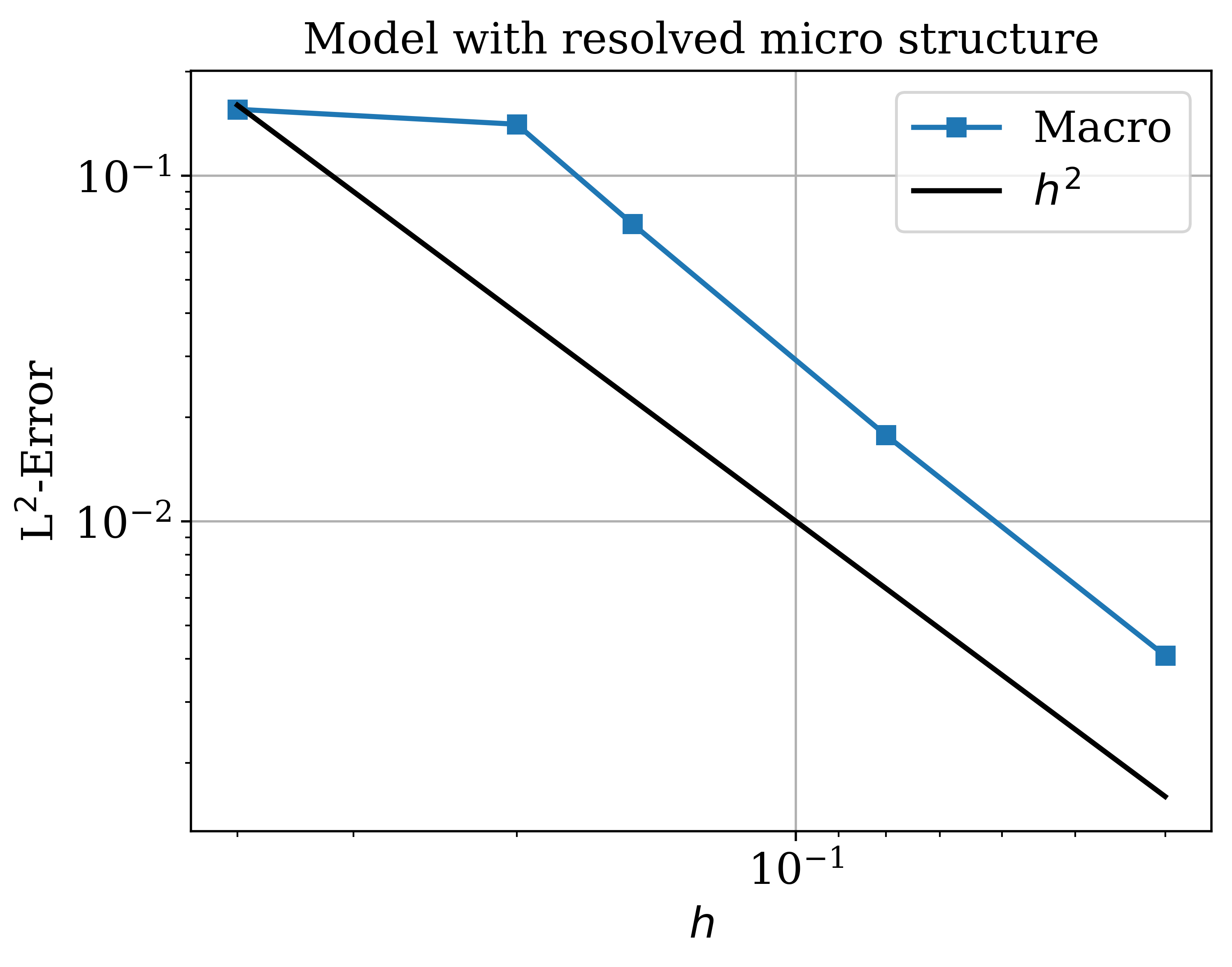}
     \end{subfigure}
     \caption{Convergence study for the simulated temperature $\theta$ with respect to the mesh resolution. The problem with resolved microstructure was solved for $\varepsilon=0.1$ and the microstructures were locally refined with a resolution of $h_\varepsilon = 2\varepsilon h$. All simulation results were compared to a simulation with resolution $h=0.015$.}
     \label{fig:convergence_for_mesh_resolution}
\end{figure}
\end{appendices}
%

\end{document}